\providecommand{\abs}[1]{\left\lvert#1\right\rvert}
\providecommand{\NN}{{\mathbb{N}}}
\providecommand{\ZZ}{{\mathbb{Z}}}
\providecommand{\RR}{{\mathbb{R}}}
\providecommand{\nonnegRR}{{\RR_{\ge0}}}
\providecommand{\CC}{{\mathbb{C}}}
\providecommand{\Sph}{{\mathbb{S}}}
\providecommand{\cA}{{\mathscr{A}}}
\providecommand{\cG}{{\mathscr{G}}}
\providecommand{\cM}{{\mathscr{M}}}
\providecommand{\cS}{{\mathscr{S}}}
\providecommand{\cT}{{\mathscr{T}}}
\providecommand{\vones}{\mathbbm{1}}
\providecommand{\SetOf}[2]{\left\{#1\vphantom{#2}\,\right.\left|\,\vphantom{#1}#2\right\}}
\providecommand{\smallSetOf}[2]{\{#1\,|\,#2\}}
\DeclareMathOperator{\lin}{lin}
\DeclareMathOperator{\aff}{aff}
\DeclareMathOperator{\conv}{conv}
\DeclareMathOperator{\pos}{pos}
\DeclareMathOperator{\image}{im}
\providecommand{\smallpmatrix}[1]{\left(\begin{smallmatrix} #1 \end{smallmatrix}\right)}
\providecommand\cprime{$'$}
\providecommand{\envelope}[2]{{{\mathscr{E}}_{#1}(#2)}}
\providecommand{\lift}[2]{{{\mathscr{L}}_{#1}(#2)}}
\providecommand{\tightspan}[2]{{{\mathscr{T}}_{#1}(#2)}}
\providecommand{\subdivision}[2]{{\Sigma_{#1}(#2)}}
\providecommand{\Hypersimplex}[2]{{\Delta(#1,#2)}}
\providecommand{\Grassmannian}[2]{{G_{#1,#2}}}
\providecommand{\tropGrassmannian}[2]{{\cG_{#1,#2}}}
\providecommand{\tropGrassmannianOne}[2]{{\cG'_{#1,#2}}}
\providecommand{\tropGrassmannianTwo}[2]{{\cG''_{#1,#2}}}
\providecommand{\tropGrassmannianThree}[2]{{\cG'''_{#1,#2}}}
\providecommand{\tropPreGrassmannian}[2]{{{\rm pre-}\cG_{#1,#2}}}
\providecommand{\tropPreGrassmannianOne}[2]{{{\rm pre-}\cG'_{#1,#2}}}
\providecommand{\Tropicalization}[1]{\cT(#1)}
\providecommand{\transpose}[1]{{#1}^{\sf T}}
\providecommand{\scp}[2]{\langle{#1},{#2}\rangle}
\providecommand{\PrimeWeights}[1]{{\mathscr W}(#1)}
\providecommand{\SplitComplex}[1]{{\rm Split}(#1)}
\providecommand{\WeakSplitComplex}[1]{{\rm Split}^{\rm w}(#1)}
\DeclareMathOperator{\Vertices}{Vert}
\DeclareMathOperator{\Assoc}{Assoc}
\DeclareMathOperator{\vol}{Vol}
\DeclareMathOperator{\initial}{in}
\providecommand{\SecondaryFan}[1]{{\rm SecFan}(#1)}
\providecommand{\SecondaryFanOne}[1]{{\rm SecFan}'(#1)}
\providecommand{\SecondaryPolytope}[1]{{\rm SecPoly}(#1)} 
\providecommand{\SplitPoly}[1]{{\rm SplitPoly}(#1)}
\providecommand{\Quotient}{\slash}
\theoremstyle{plain}
\newtheorem{theorem}{Theorem}
\newtheorem{proposition}[theorem]{Proposition}
\newtheorem{corollary}[theorem]{Corollary}
\newtheorem{lemma}[theorem]{Lemma}
\newtheorem{observation}[theorem]{Observation}
\theoremstyle{definition}
\newtheorem{example}[theorem]{Example}
\newtheorem{construction}[theorem]{Construction}
\newtheorem{remark}[theorem]{Remark}
\newtheorem{question}[theorem]{Question}
\begin{document}

\title{Splitting Polytopes}

\author[Herrmann and Joswig]{Sven Herrmann \and Michael Joswig}
\address{Fachbereich Mathematik, TU Darmstadt, 64289 Darmstadt, Germany}
\thanks{Sven Herrmann is supported by a Graduate Grant of TU Darmstadt. Research by Michael Joswig is supported by DFG
  Research Unit ``Polyhedral Surfaces''.}
\email{\{sherrmann,joswig\}@mathematik.tu-darmstadt.de}

\date{\today}

\begin{abstract}
  A \emph{split} of a polytope $P$ is a (regular) subdivision with exactly two maximal
  cells.  It turns out that each weight function on the vertices of $P$ admits a unique
  decomposition as a linear combination of weight functions corresponding to the splits of
  $P$ (with a split prime remainder).  This generalizes a result of Bandelt and Dress
  [Adv.\ Math.\ 92 (1992)] on the decomposition of finite metric spaces.

  Introducing the concept of \emph{compatibility} of splits gives rise to a finite
  simplicial complex associated with any polytope $P$, the \emph{split complex} of $P$.
  Complete descriptions of the split complexes of all hypersimplices are obtained.
  Moreover, it is shown that these complexes arise as subcomplexes of the tropical
  (pre-)Grassmannians of Speyer and Sturmfels [Adv. Geom. 4 (2004)].
\end{abstract}

\maketitle

\section{Introduction}

A real-valued weight function $w$ on the vertices of a polytope $P$ in $\RR^d$ defines a
polytopal subdivision of $P$ by way of lifting to $\RR^{d+1}$ and projecting the lower
hull back to $\RR^d$.  The set of all weight functions on $P$ has the natural structure of
a polyhedral fan, the \emph{secondary fan} $\SecondaryFan{P}$.  The rays of
$\SecondaryFan{P}$ correspond to the coarsest (regular) subdivisions of $P$.  This paper
deals with the coarsest subdivisions with precisely two maximal cells.  These are called
\emph{splits}.

Hirai proved in \cite{Hirai06} that an arbitrary weight function on $P$ admits a canonical
decomposition as a linear combination of split weights with a \emph{split prime}
remainder.  This generalizes a classical result of Bandelt and Dress \cite{BandeltDress92}
on the decomposition of finite metric spaces, which proved to be useful for applications
in phylogenomics; e.g., see Huson and Bryant \cite{HusonBryant06}.  We give a new proof of
Hirai's split decomposition theorem which establishes the connection to the theory of
secondary fans developed by Gel{\cprime}fand, Kapranov, and Zelevinsky \cite{GKZ94}.

Our main contribution is the introduction and the study of the \emph{split complex} of a
polytope $P$.  This comes about as the clique complex of the graph defined by a
\emph{compatibility} relation on the set of splits of~$P$.  A first example is the
boundary complex of the polar dual of the $(n-3)$-dimensional associahedron, which is
isomorphic to the split complex of an $n$-gon.  A focus of our investigation is on the
hypersimplices $\Hypersimplex{k}{n}$, which are the convex hulls of the $0/1$-vectors of
length $n$ with exactly $k$ ones.  We classify all splits of the hypersimplices together
with their compatibility relation.  This describes the split complexes of the
hypersimplices.

Tropical geometry is concerned with the \emph{tropicalization} of algebraic varieties.  An
important class of examples is formed by the \emph{tropical Grassmannians}
$\tropGrassmannian{k}{n}$ of Speyer and Sturmfels \cite{SpeyerSturmfels04}, which are the
tropicalizations of the ordinary Grassmannians of $k$-dimensional subspaces of an
$n$-dimensional vector space (over some field).  It is a challenge to obtain a complete
description of $\tropGrassmannian{k}{n}$ even for most fixed values of $k$ and $n$.  A
better behaved close relative of $\tropGrassmannian{k}{n}$ is the \emph{tropical
  pre-Grassmannian} $\tropPreGrassmannian{k}{n}$ arising from tropicalizing the ideal of
quadratic Pl\"ucker relations.  This is a subfan of the secondary fan of
$\Hypersimplex{k}{n}$, and its rays correspond to coarsest subdivisions of
$\Hypersimplex{k}{n}$ whose (maximal) cells are matroid polytopes; see Kapranov
\cite{Kapranov93} and Speyer \cite{Speyer04}.  As one of our main results we prove that
the split complex of $\Hypersimplex{k}{n}$ is a subcomplex of
$\tropPreGrassmannianOne{k}{n}$, the intersection of the fan $\tropPreGrassmannian{k}{n}$
with the unit sphere in $\RR^{\tbinom{n}{k}}$.  Moreover, we believe that our approach can
be extended further to obtain a deeper understanding of the tropical (pre-)Grassmannians.
To follow this line, however, is beyond the scope of this paper.

The paper is organized as follows. We start out with the investigation of general weight
functions on a polytope $P$ and their coherence.  Two weight functions are \emph{coherent}
if there is a common refinement of the subdivisions that they induce on $P$.  As an
essential technical device for the subsequent sections we introduce the \emph{coherency
  index} of two weight functions on $P$.  This generalizes the definition of Koolen and
Moulton for $\Hypersimplex{2}{n}$ \cite{KoolenMoulton96}, Section 4.1.

The third section then deals with splits of polytopes and the corresponding weight
functions.  As a first result we give a concise new proof of the split decomposition
theorems of Bandelt and Dress \cite{BandeltDress92}, Theorem~3, and Hirai \cite{Hirai06},
Theorem~2.2.

A split subdivision of the polytope $P$ is clearly determined by the affine hyperplane
spanned by the unique interior cell of codimension $1$.  A set of splits is
\emph{compatible} if any two of the corresponding split hyperplanes do not meet in the
(relative) interior of $P$.  The \emph{split complex} $\SplitComplex{P}$ is the abstract
simplicial complex of compatible sets of splits of $P$.  It is an interesting fact that
the subdivision of $P$ induced by a sum of weights corresponding to a compatible system of
splits is dual to a tree.  In this sense $\SplitComplex{P}$ can always be seen as a ``space of
trees''.

In Section~\ref{sec:hypersimplices} we study the hypersimplices $\Hypersimplex{k}{n}$.
Their splits are classified and explicitly enumerated.  Moreover, we characterize the
compatible pairs of splits.  The purpose of the short Section~\ref{sec:metric} is to
specialize our results for arbitrary hypersimplices to the case $k=2$.  A metric on a
finite set of $n$ points yields a weight function on $\Hypersimplex{2}{n}$, and hence all
the previous results can be interpreted for finite metric spaces.  This is the classical
situation studied by Bandelt and Dress \cite{BandeltDress86,BandeltDress92}.  Notice that
some of their results had already been obtained by Isbell much earlier \cite{Isbell64}.

Section~\ref{sec:matroid} bridges the gap between the split theory of the hypersimplices
and matroid theory.  This way, as one key result, we can prove that the split complex of
the hypersimplex $\Hypersimplex{k}{n}$ is a subcomplex of the tropical pre-Grassmannian
$\tropPreGrassmannianOne{k}{n}$.  We conclude the paper with a list of open questions.

\section{Coherency of Weight Functions}
\label{sec:coherency}

Let $P\subset\RR^{d+1}$ be a polytope with vertices $v_1,\dots,v_n$. We form the $n\times
(d+1)$-matrix $V$ whose rows are the vertices of $P$.  For technical reasons we make the
assumption that $P$ is $d$-dimensional and that the (column) vector $\vones:=(1,\dots,1)$
is contained in the linear span of the columns of~$V$. In particular, this implies that
$P$ is contained in some affine hyperplane which does not contain the origin.  A
\emph{weight function} $w : \Vertices{P} \to \RR$ of $P$ can be written as a vector in
$\RR^n$. Now each weight function $w$ of $P$ gives rise to the unbounded polyhedron
\[
\envelope{w}{P} \ := \ \SetOf{x\in\RR^{d+1}}{Vx \ge -w} \, ,
\]
the \emph{envelope} of $P$ with respect to~$w$.  We refer to Ziegler \cite{Ziegler95} for
details on polytopes.

If $w_1$ and $w_2$ are both weight functions of $P$, then $Vx \ge -w_1$ and $Vy \ge -w_2$
implies $V(x+y)\ge-(w_1+w_2)$.  This yields the inclusion
\begin{equation}\label{eq:coherent}
  \envelope{w_1}{P}+\envelope{w_2}{P} \ \subseteq \ \envelope{w_1+w_2}{P} \, .
\end{equation}
If equality holds in \eqref{eq:coherent} then $(w_1,w_2)$ is called a \emph{coherent
  decomposition} of $w=w_1+w_2$. (Note that this must not be confused with the notion of
``coherent subdivision'' which is sometimes used instead of ``regular subdivision''.)

\begin{example}\label{ex:hexagon:1}
  We consider a hexagon $H\subset\RR^3$ whose vertices are the columns of the matrix
  \[
  \transpose{V} \ = \
  \begin{pmatrix}
    1 & 1 & 1 & 1 & 1 & 1 \\
    0 & 1 & 2 & 2 & 1 & 0 \\
    0 & 0 & 1 & 2 & 2 & 1
  \end{pmatrix}
  \]
  and three weight functions $w_1=(0,0,1,1,0,0)$, $w_2=(0,0,0,1,1,0)$, and
  $w_3=(0,0,2,3,2,0)$.  Again we identify a matrix with the set of its rows.  A direct
  computation then yields that $w_1+w_2$ is not coherent, but both $w_1+w_3$ and $w_2+w_3$
  are coherent.
\end{example}

Each \emph{face} of a polyhedron, that is, the intersection with a supporting hyperplane,
is again a polyhedron, and it can be bounded or not.  A polyhedron is \emph{pointed} if it
does not contain an affine subspace or, equivalently, its lineality space is trivial. This implies
that the set of all bounded faces is non-empty and forms a polytopal complex.  This
polytopal complex is always contractible (see Hirai \cite[Lemma 4.5]{Hirai06b}).  The polytopal
complex of bounded faces of the polyhedron $\envelope{w}{P}$ is called the \emph{tight
  span} of $P$ with respect to $w$, and it is denoted by $\tightspan{w}{P}$.

\begin{lemma}\label{lem:coherent}
  Let $w=w_1+w_2$ be a decomposition of weight functions of $P$.  Then the following
  statements are equivalent.
  \begin{enumerate}
  \item\label{it:coherent:def} The decomposition $(w_1,w_2)$ is coherent,
  \item\label{it:coherent:ts-ts} $\tightspan{w}{P}\subseteq\tightspan{w_1}{P}+\tightspan{w_2}{P}$\,,
  \item\label{it:coherent:ts-lift} $\tightspan{w}{P}\subseteq\envelope{w_1}{P}+\envelope{w_2}{P}$\,,
  \item\label{it:coherent:vertex} each vertex of $\tightspan{w}{P}$ can be written as a
    sum of a vertex of $\tightspan{w_1}{P}$ and a vertex of $\tightspan{w_2}{P}$.
  \end{enumerate}
\end{lemma}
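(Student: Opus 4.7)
The plan is to prove the cycle $(i)\Rightarrow(ii)\Rightarrow(iii)\Rightarrow(i)$ and then close the loop with $(i)\Rightarrow(iv)\Rightarrow(iii)$. A preliminary observation that I would use throughout: the hypotheses $\vones\in\image(V)$ and $\dim P=d$ together force $V$ to have full column rank $d+1$ (the vertices all lie in a hyperplane $\{x:\scp{c}{x}=1\}$ for any $c$ with $Vc=\vones$, and spanning a $d$-dimensional polytope in such a hyperplane requires them to span $\RR^{d+1}$ linearly). Hence $\envelope{w_1}{P}$, $\envelope{w_2}{P}$, and $\envelope{w}{P}$ are all pointed polyhedra sharing the common recession cone $C:=\{x\in\RR^{d+1}:Vx\geq 0\}$, which depends only on $V$ and not on any weight. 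Consequently, each envelope equals the convex hull of its vertices plus $C$, and the vertices of $\envelope{w_i}{P}$ are exactly the vertices of $\tightspan{w_i}{P}$.

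For $(i)\Rightarrow(ii)$ I would invoke the standard description of the face lattice of a Minkowski sum of pointed polyhedra: every face of $Q_1+Q_2$ has the form $F_1+F_2$, where $F_i$ is a face of $Q_i$ maximizing a common linear functional, and $F_1+F_2$ is bounded if and only if both $F_1$ and $F_2$ are. Applied to the identity $\envelope{w}{P}=\envelope{w_1}{P}+\envelope{w_2}{P}$, this decomposes each bounded face of $\envelope{w}{P}$, and hence every point of $\tightspan{w}{P}$, into a sum belonging to $\tightspan{w_1}{P}+\tightspan{w_2}{P}$. The step $(ii)\Rightarrow(iii)$ is immediate from $\tightspan{w_i}{P}\subseteq\envelope{w_i}{P}$. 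For $(iii)\Rightarrow(i)$ one inclusion is \eqref{eq:coherent}; for the reverse I use $\envelope{w}{P}=\tightspan{w}{P}+C$ together with $\envelope{w_1}{P}+C=\envelope{w_1}{P}$ to obtain
\[
\envelope{w}{P}\,\subseteq\,(\envelope{w_1}{P}+\envelope{w_2}{P})+C\,=\,\envelope{w_1}{P}+\envelope{w_2}{P}.
\]

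For $(i)\Rightarrow(iv)$, any vertex $v$ of $\tightspan{w}{P}$ is a vertex of the pointed polyhedron $\envelope{w}{P}$, so there is a direction $c$ for which $\scp{c}{\cdot}$ attains its maximum on $\envelope{w}{P}$ uniquely at $v$. Under the Minkowski identity the maximum of $\scp{c}{\cdot}$ on $\envelope{w}{P}$ equals the sum of the maxima of $\scp{c}{\cdot}$ on the two summands, and uniqueness of the maximizer at $v$ forces unique vertex maximizers $v_1\in\envelope{w_1}{P}$ and $v_2\in\envelope{w_2}{P}$ with $v=v_1+v_2$. For $(iv)\Rightarrow(iii)$, any $x\in\tightspan{w}{P}$ lies in some bounded face and can therefore be written as a convex combination $\sum_j\lambda_j v^{(j)}$ of vertices of $\tightspan{w}{P}$; substituting $v^{(j)}=v_1^{(j)}+v_2^{(j)}$ from (iv) produces $x=\sum_j\lambda_j v_1^{(j)}+\sum_j\lambda_j v_2^{(j)}$, and each partial sum is a convex combination of vertices of $\envelope{w_i}{P}$, hence lies in $\envelope{w_i}{P}$.

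I expect $(iv)\Rightarrow(iii)$ to be the most delicate step: one is tempted to try to strengthen the conclusion to (ii) via the same convex-combination rearrangement, but because $\tightspan{w_i}{P}$ is generally not convex the partial sums $\sum_j\lambda_j v_i^{(j)}$ cannot be confined to the tight span—only to the ambient envelope. This is precisely why (iii) is the right common target from both (ii) and (iv), and it is what makes the cycle close cleanly rather than collapsing (iv) directly onto (ii).
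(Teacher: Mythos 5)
Your proof is correct and relies on the same two ingredients as the paper's: the face decomposition of a Minkowski sum of pointed polyhedra (for $(i)\Rightarrow(ii)$ and $(i)\Rightarrow(iv)$), and the fact that a pointed polyhedron equals its complex of bounded faces plus its recession cone (for the return step). The only genuine difference is in the routing: the paper chains $(iii)\Rightarrow(iv)\Rightarrow(i)$, whereas you run $(iii)\Rightarrow(i)$ directly and attach $(iv)$ via the separate detour $(i)\Rightarrow(iv)\Rightarrow(iii)$. Your routing is arguably cleaner on one point that the paper glosses over: in the paper's $(iv)\Rightarrow(i)$ step it writes $x=y+r$ with $y\in\tightspan{w}{P}$ and then invokes the hypothesis of $(iv)$ on $y$, but $(iv)$ only speaks about \emph{vertices} of the tight span, and in general a point of $Q$ cannot be written as a single vertex plus a recession direction. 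Your $(iv)\Rightarrow(iii)$ step fills exactly this gap by first expressing a tight-span point as a convex combination of the vertices of the bounded face containing it; the same repair is needed, implicitly, in the paper's argument, where the resulting $y_1,y_2$ are then general points of $\envelope{w_i}{P}$ rather than vertices, which is harmless since only $Vy_i\le -w_i$ is used downstream. Your closing remark about why $(iv)$ cannot be upgraded to $(ii)$ by the same manipulation is exactly right and is the reason $(iii)$ is the correct intermediate target.
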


For a similar statement in the special case where $P$ is a second hypersimplex (see
Section~\ref{sec:hypersimplices} below) see Koolen and Moulton \cite{KoolenMoulton97},
Lemma 1.2.

\begin{proof}
  If $(w_1,w_2)$ is coherent then by definition $\envelope{w}{P}=\envelope{w_1}{P}+\envelope{w_2}{P}$.
  Each face $F$ of the Minkowski sum of two polyhedra is the Minkowski sum of two faces
  $F_1,F_2$, one from each summand.  Now $F$ is bounded if and only if $F_1$ and $F_2$ are
  bounded.  This proves that \eqref{it:coherent:def} implies \eqref{it:coherent:ts-ts}.

  Clearly, \eqref{it:coherent:ts-ts} implies \eqref{it:coherent:ts-lift}.  Moreover,
  \eqref{it:coherent:ts-lift} implies \eqref{it:coherent:vertex} by the same argument on
  Minkowski sums as above.

  To complete the proof we have to show that \eqref{it:coherent:def} follows from
  \eqref{it:coherent:vertex}.  So assume that each vertex of $\tightspan{w}{P}$ can be
  written as a sum of a vertex of $\tightspan{w_1}{P}$ and a vertex of
  $\tightspan{w_2}{P}$, and let $x\in\envelope{w}{P}$.  Then $x$ can be written as $x=y+r$
  where $y\in\tightspan{w}{P}$ and r is a ray of $\envelope{w}{P}$, that is, 
  $z+\lambda r\in\envelope{w}{P}$ for all $z\in\envelope{w}{P}$ and all $\lambda \geq 0$.
  It follows that $V r\le 0$.  By assumption
  there are vertices $y_1$ and $y_2$ of $\tightspan{w_1}{P}$ and $\tightspan{w_2}{P}$ such
  that $y=y_1+y_2$.  Setting $x_1:=y_1+r$ and $x_2:=y_2$ we have $x=x_1+x_2$ with
  $x_2\in\envelope{w_2}{P}$.  Computing
  \[
  V x_1 \ = \ V (y_1 + r) \ \le \ V y_1 + V r \ \le \ -w_1 + 0 \ = \ -w_1 \,,
  \]
  we infer that $x_1\in\envelope{w_1}{P}$, and hence $w_1$ and $w_2$ are
  coherent.
\end{proof}

We recall basic facts about cone polarity.  For an arbitrary pointed polyhedron
$X\subset\RR^{d+1}$ there exists a unique polyhedral cone $C(X)\subset\RR^{d+2}$ such that
$X=\SetOf{x\in \RR^{d+1}}{(1,x)\in C(P)}$. If $X$ is given in inequality description
$X=\SetOf{x\in\RR^{d+1}}{A x\geq b}$ one has
\[
C(X) \ = \ \SetOf{y\in\RR^{d+2}}{
  \begin{pmatrix}
    1  & 0 \\
    -b & A
  \end{pmatrix} y \ge 0
} \, .
\]
If $X$ is given in a vertex-ray description $P=\conv V+\pos R$ one has
\[
C(X) \ = \ \pos
\begin{pmatrix}
  \vones & V\\
  0 & R
\end{pmatrix} \, .
\]
For any set $M\subseteq\RR^{d+2}$ its cone polar is defined as
$M^\circ:=\smallSetOf{y\in\RR^{d+2}}{\scp{x}{y}\ge 0 \text{ for all $x\in M$}}$. If
$C=\pos A$ is a cone it is easily seen that $C^\circ=\smallSetOf{y\in\RR^{d+2}}{A y\geq
  0}$ and that ${(C^\circ)}^\circ=C$. The cone $C^\circ$ is called the \emph{polar dual}
cone of $C$. Two polyhedra $X$ and $Y$ are \emph{polar duals} if the corresponding cones
$C(X)$ and $C(Y)$ are. The face lattices of dual cones are anti-isomorphic.

For the following our technical assumptions from the beginning come into play.  Again let
$P$ be a $d$-polytope in $\RR^{d+1}$ such that $\vones$ is contained in the column span of
the matrix $V$ whose rows are the vertices of $P$.  The standard basis vectors of
$\RR^{d+1}$ are denoted by $e_1,\dots,e_{d+1}$.

\begin{proposition}\label{prop:duality}
  The polyhedron $\envelope{w}{P}$ is affinely equivalent to the polar dual of the polyhedron
  \begin{equation*}
    \lift{w}{P} \ := \conv\SetOf{v+w(v)e_{d+1}}{v\in\Vertices{P}} \, + \, \nonnegRR e_{d+1}\, .
  \end{equation*}
  Moreover, the face poset of $\tightspan{w}{P}$ is anti-isomorphic to the face poset of
  the interior lower faces (with respect to the last coordinate) of $\lift{w}{P}$.
\end{proposition}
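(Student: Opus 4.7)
My plan is to prove both parts of Proposition~\ref{prop:duality} by passing to the cones and using polar cone duality, exploiting the two formulas for $C(X)$ stated just before the proposition.

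First, applying the inequality formula to $\envelope{w}{P}=\{x:Vx\ge -w\}$ yields
\[
  C(\envelope{w}{P})\;=\;\bigl\{(y_0,y)\in\RR\times\RR^{d+1}:y_0\ge 0,\;Vy+y_0 w\ge 0\bigr\},
\]
so its polar cone is generated by the rows of the defining matrix, namely $C(\envelope{w}{P})^\circ=\pos\bigl(\{e_1\}\cup\{(w(v),v):v\in\Vertices P\}\bigr)$. The vertex-ray formula applied to $\lift{w}{P}$ gives
\[
  C(\lift{w}{P})\;=\;\pos\bigl(\{(1,v+w(v)e_{d+1}):v\in\Vertices P\}\cup\{(0,e_{d+1})\}\bigr).
\]
Using the vector $c$ with $Vc=\vones$ (choosing coordinates so that $c_{d+1}\ne 0$, i.e.\ $e_{d+1}$ is transverse to $\aff P$), I will exhibit an explicit linear isomorphism $A$ of $\RR^{d+2}$ matching generators: $Ae_1=(0,e_{d+1})$ and $A(w(v),v)=(1,v+w(v)e_{d+1})$ for every vertex $v$. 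The matrix $A$ has a simple block form with first row $(0,c^T)$, middle rows $(0,e_i^T)$ for $i=1,\ldots,d$, and last row $(1,e_{d+1}^T)$, and its determinant is proportional to $c_{d+1}$. Thus $C(\envelope{w}{P})^\circ\cong C(\lift{w}{P})$, which yields the affine equivalence claimed in part~(i) after descending from cones to the slices at $\{y_0=1\}$.

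For part~(ii), cone polarity induces an anti-isomorphism of the face lattices of $\envelope{w}{P}$ and $\lift{w}{P}$. A face $F$ of $\envelope{w}{P}$ is bounded iff its associated cone face meets $\{y_0=0\}$ only at the origin, equivalently iff $F$ contains no recession direction. Under the anti-isomorphism (through $A$), this boundedness translates into the corresponding face of $\lift{w}{P}$ being simultaneously a lower face (not parallel to the recession ray $\nonnegRR e_{d+1}$) and an interior face (not contained in any vertical facet of $\lift{w}{P}$, these being the ones arising above boundary facets of $P$). The main obstacle I anticipate is the careful bookkeeping in this last step: the recession cone of $\envelope{w}{P}$ must be split, under $A$, into the piece encoded by the ray $(0,e_{d+1})$ (giving the ``lower'' condition) and the pieces encoded by vertical facets (giving the ``interior'' condition), and making this split cleanly for general faces, not just facets, takes some care.
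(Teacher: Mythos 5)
Your proposal follows the same route as the paper: pass to the homogenizing cones, match up generating sets to obtain polar duality (you construct an explicit linear isomorphism $A$ using a vector $c$ with $Vc=\vones$ and note the needed transversality $c_{d+1}\neq 0$, whereas the paper changes coordinates so that $V=(\bar V,\vones)$ — these are the same move in different clothing), and then invoke the face-lattice anti-isomorphism to match bounded faces of $\envelope{w}{P}$ with interior lower faces of $\lift{w}{P}$. The bookkeeping you flag at the end of part (ii) — separating the ``lower'' condition (which is dual, via the generator $(0,e_{d+1})$, to $\hat F$ not lying in the face at infinity of $C(\envelope{w}{P})$) from the ``interior'' condition (which is dual, via the vertical facets, to $\hat F$ containing no recession ray of $\envelope{w}{P}$) — is precisely the subtle point, and your plan treats it at least as explicitly as the published proof.
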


\begin{proof}
  Note first, that by our assumption that $\vones$ is in the column span of $V$, up to a
  linear transformation of $\RR^{d+1}$, we can assume that $V=(\bar V,\vones)$ for an
  $n\times d$-matrix $\bar V$.  This yields
  \[
  C(\envelope{w}{P}) \ = \ \SetOf{x\in\RR^{d+2}}{\begin{pmatrix}
      1 & 0 & 0\\
      w & \bar V& \vones
    \end{pmatrix} x \geq 0}.
    \]
  On the other hand we have
  \[
  C(\lift{w}{P}) \ = \ \pos \begin{pmatrix}
      \vones &\bar V& w \\
      0 & 0 & 1
    \end{pmatrix},
  \]
  which is linearly isomorphic to $\bar C=\pos \smallpmatrix{w & \vones &\bar V\\ 1& 0 &
    0}$ by a coordinate change, so $\envelope{w}{P}$ and $\lift{w}{P}$ are polar duals, up
  to linear transformations.
    
  This way we have obtained an anti-isomorphism of the face lattices of
  $C(\envelope{w}{P})$ and $C(\lift{w}{P})$.  A face~$F$ of $\envelope{w}{P}$ is bounded
  if and only if no generator of $C(\envelope{w}{P})$ with first coordinate equal to zero
  is smaller then $F$ in the face lattice.  In the dual view, this means that the
  corresponding face $F'$ of $\lift{w}{P}$ is greater then a facet which is parallel to
  the last coordinate axis in the face lattice of $C(\lift{w}{P})$. But this exactly means
  that $F'$ is a lower face. So the lattice anti-isomorphism of $C(\envelope{w}{P})$ and
  $C(\lift{w}{P})$ induces a poset anti-isomorphism between $\tightspan{w}{P}$ and the
  interior lower faces of $\lift{w}{P}$.
\end{proof}

The lower faces of $\lift{w}{P}$ (with respect to the last coordinate) are precisely its
bounded faces.  By projecting back to $\aff{P}$ in the $e_{d+1}$-direction, the polytopal
complex of bounded faces of $\lift{w}{P}$ induces a polytopal decomposition
$\subdivision{w}{P}$ of $P$. Note that we only allow the vertices of $P$ as vertices of
any subdivision of $P$. A polytopal subdivision which arises in this way is called
\emph{regular}.  Two weight functions are \emph{equivalent} if they induce the same
subdivision.  This allows for one more characterization extending
Lemma~\ref{lem:coherent}.

\begin{corollary}\label{cor:coherent}
  A decomposition $w=w_1+w_2$ of weight functions of~$P$ is coherent if and only if the
  subdivision $\subdivision{w}{P}$ is the common refinement of the subdivisions
  $\subdivision{w_1}{P}$ and $\subdivision{w_2}{P}$.
\end{corollary}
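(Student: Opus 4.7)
The proof combines Proposition \ref{prop:duality}---the anti-isomorphism between $\subdivision{w}{P}$ and $\tightspan{w}{P}$---with Lemma \ref{lem:coherent}(iv): coherence holds exactly when every vertex of $\tightspan{w}{P}$ decomposes as a sum $y_1+y_2$ of vertices of the tight spans of $w_1$ and $w_2$. The plan is to translate this vertex decomposition, via duality, into a decomposition of maximal cells of $\subdivision{w}{P}$ as intersections of maximal cells of $\subdivision{w_1}{P}$ and $\subdivision{w_2}{P}$, which is exactly the statement that $\subdivision{w}{P}$ coincides with the common refinement $\subdivision{w_1}{P}\wedge\subdivision{w_2}{P}$.

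The technical heart is the following observation. Under the duality of Proposition \ref{prop:duality}, a vertex $y$ of $\tightspan{w}{P}$ corresponds to the maximal cell $F=\conv\{v\in\Vertices P : v\cdot y = -w(v)\}$ of $\subdivision{w}{P}$---the convex hull of the vertices of $P$ active at $y$---and similarly $y_i$ corresponds to $F_i=\conv\{v : v\cdot y_i = -w_i(v)\}$. I claim that $y=y_1+y_2$ if and only if $F=F_1\cap F_2$. The forward direction uses that $v\cdot y_i\ge -w_i(v)$ holds for $y_i\in\envelope{w_i}{P}$, so $v\cdot y = -w(v)$ forces equality in both summands simultaneously. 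The reverse direction uses that $y_1+y_2\in\envelope{w_1}{P}+\envelope{w_2}{P}\subseteq\envelope{w}{P}$ has active set exactly $F_1\cap F_2$; since $F=F_1\cap F_2$ is $d$-dimensional, these active constraints pin down a unique point of $\envelope{w}{P}$, forcing $y_1+y_2=y$.

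Both implications of the corollary then follow by direct translation. If the decomposition is coherent, Lemma \ref{lem:coherent}(iv) yields $y=y_1+y_2$ for every vertex $y$ of $\tightspan{w}{P}$, so every maximal cell of $\subdivision{w}{P}$ is a full-dimensional intersection $F_1\cap F_2$; conversely, any such intersection arises in this way (via the vertex $y_1+y_2$), so the maximal cells of $\subdivision{w}{P}$ agree with those of the common refinement, and the two subdivisions agree. If instead $\subdivision{w}{P}$ equals the common refinement, then every maximal cell $F$ is of the form $F_1\cap F_2$, and by the reverse implication of the observation its dual vertex satisfies $y=y_1+y_2$; this verifies Lemma \ref{lem:coherent}(iv) and hence coherence.

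The main obstacle is the reverse implication of the observation: checking that $y_1+y_2$ really is a vertex of $\tightspan{w}{P}$, not merely a point of $\envelope{w}{P}$. This uses the full-dimensionality of $F_1\cap F_2$, together with the running assumption that $\vones$ lies in the column span of $V$, to supply $d+1$ independent active constraints, and the fact that every vertex of $\envelope{w}{P}$ is automatically bounded and so lies in the tight span.
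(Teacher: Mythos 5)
Your proof follows essentially the same route as the paper's: invoke Lemma~\ref{lem:coherent}(iv) to reduce coherence to the vertex-sum property, and then use the duality of Proposition~\ref{prop:duality} to identify a vertex $y$ of $\tightspan{w}{P}$ with the maximal cell $F_w(y)=\conv\{v\in\Vertices P : \scp{v}{y}=-w(v)\}$ and establish $y=y_1+y_2 \iff F_w(y)=F_{w_1}(y_1)\cap F_{w_2}(y_2)$, exactly the equality the paper asserts. You actually flesh out the argument more than the paper's terse proof does --- in particular the reverse direction of the observation via full-dimensionality of the active set --- but the underlying idea is identical.
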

\begin{proof}
  By Lemma~\ref{lem:coherent}, the decomposition $w_1+w_2$ is coherent if and only if each
  vertex $x$ of $\tightspan{w}{P}$ is the sum of a vertex $x_1$ of $\tightspan{w_1}{P}$
  and a vertex $x_2$ of $\tightspan{w_2}{P}$.  In terms of the duality proved in
  Proposition~\ref{prop:duality} the vertex $x$ corresponds to the maximal cell
  $F_w(x):=\conv\smallSetOf{v\in\Vertices P}{\scp{v}{x}=-w}$ of $\subdivision{w}{P}$.
  Similarly, $x_1$ and $x_2$ corresponds to the cells $F_{w_1}(x_1)$ and $F_{w_2}(x_2)$ of
  $\subdivision{w_1}{P}$ and $\subdivision{w_2}{P}$, respectively. In fact, we have
  $F_w(x)=F_{w_1}(x_1)\cap F_{w_2}(x_2)$, and so $\subdivision{w}{P}$ is the common
  refinement of $\subdivision{w_1}{P}$ and $\subdivision{w_2}{P}$. The converse follows
  similarly.
\end{proof}  

\begin{example}\label{ex:hexagon:2}
  In Example~\ref{ex:hexagon:1} the tight spans of the three weight functions of the
  hexagon are line segments:
  \[
  \tightspan{w_1}{H} \ = \ [0,(1,-1,0)] \, , \quad \tightspan{w_2}{H} \ = \ [0,(1,0,-1)] \, ,
  \quad \text{and} \quad \tightspan{w_3 }{H} \ = \ [0,(1,-1,-1)] \, .
  \]
\end{example}

\begin{remark}
  Interesting special cases of tight spans include the following. Finite metric spaces (on
  $n$ points) give rise to weight functions on the second hypersimplex
  $P=\Hypersimplex{2}{n}$.  In this case the tight span can be interpreted as a ``space'' of
  trees which are candidates to fit the given metric.  This has been studied by Bandelt
  and Dress~\cite{BandeltDress92}, and this is the context in which the name ``tight
  span'' was used first.  See also Section~\ref{sec:metric} below.

  If $P$ is a product of two simplices, the tight span of a lifting function gives rise to
  a \emph{tropical polytope} introduced by Develin and
  Sturmfels~\cite{DevelinSturmfels04}, the cells in the resulting regular decomposition of
  $P$ are the \emph{polytropes} of~\cite{JoswigKulas08}.

  If $P$ spans the affine hyperplane $x_1=1$ and if we consider the weight function defined
  by $w(v)=v_2^2+v_3^2+\dots+v_{d+1}^2$ for each vertex $v$ of $P$ then the tight span
  $\tightspan{w}{P}$ is isomorphic
  to the subcomplex of bounded faces of the Voronoi diagram of $\Vertices{P}$.  All
  maximal cells of the Voronoi diagram are unbounded and hence the tight span is at most
  $(d-1)$-dimensional.  The subdivision $\subdivision{w}{P}$ is then isomorphic to the
  Delone decomposition of $\Vertices{P}$. 
\end{remark}

Let $w$ and $w'$ be weight functions of our polytope $P$.  We want to have a measure which
expresses to what extent the pair of weight functions $(w',w-w')$ deviates from coherence
(if at all).  The \emph{coherency index} of $w$ with respect to $w'$ is defined as
\begin{equation}\label{eq:coherency-index}
  \alpha^w_{w'} \ := \
  \min_{x\in\Vertices{\envelope{w}{P}}}
  \left\{ \max_{x'\in\Vertices{\envelope{w'}{P}}}
    \left\{ \min_{v\in V_{w'}(x')}
      \left\{ 
        \frac{\scp{v}{x}+w(v)}{\scp{v}{x'}+w'(v)}
      \right\}
    \right\}
  \right\}  \, ,
\end{equation}
where $V_{w'}(x')=\SetOf{v\in\Vertices{P}}{\scp{v}{x'}\ne -w'(v)}$. (That is, $V_{w'}(x')$ is the
set of vertices of $P$ that are not contained in the cell dual to $x$.) The name is justified
by the following observation which generalizes Koolen and Moulton \cite[Theorem 4.1]{KoolenMoulton96}.

\begin{proposition}\label{prop:coherent}
  Let $w$ and $w'$ be weight functions of the polytope $P$.  Moreover, let $\lambda\in\RR$
  and $\tilde w:=w-\lambda w'$.  Then $w=\tilde w+\lambda w'$ is coherent if and only if
  $0\le\lambda\le\alpha^w_{w'}$.
\end{proposition}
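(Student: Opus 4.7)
My plan is to use Lemma~\ref{lem:coherent}(iv) to rephrase coherence of $w=\tilde w+\lambda w'$ as a vertex-sum decomposability condition on $\tightspan{w}{P}$, and then translate the existence of such decompositions into the ratio inequality defining $\alpha^w_{w'}$. The substantial work is the case $\lambda>0$; the endpoints of the interval are handled by short separate checks.

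For $\lambda>0$, the rescaling identity $\envelope{\lambda w'}{P}=\lambda\,\envelope{w'}{P}$ (all defining inequalities of $\envelope{w'}{P}$ scale uniformly by $\lambda$) shows that the vertices of $\envelope{\lambda w'}{P}$ are exactly $\lambda y$ for $y\in\Vertices\envelope{w'}{P}$. Hence by Lemma~\ref{lem:coherent}(iv) coherence is equivalent to the following statement: for each vertex $x$ of $\tightspan{w}{P}$ there exists a vertex $y$ of $\envelope{w'}{P}$ with $x-\lambda y\in\envelope{\tilde w}{P}$. Spelling out the envelope constraint on each vertex $v$ of $P$, this reads
\[
\scp{v}{x}+w(v)\,\ge\,\lambda\bigl(\scp{v}{y}+w'(v)\bigr) \qquad\text{for every } v\in\Vertices P.
\]
Both sides are nonnegative because $x\in\envelope{w}{P}$ and $y\in\envelope{w'}{P}$. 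For $v\notin V_{w'}(y)$ the right side vanishes and the inequality is automatic; for $v\in V_{w'}(y)$ the denominator $\scp{v}{y}+w'(v)$ is strictly positive, so dividing yields the equivalent condition $\lambda\le(\scp{v}{x}+w(v))/(\scp{v}{y}+w'(v))$. Thus for fixed $x$, a compatible $y$ exists iff $\lambda\le\max_{y}\min_{v\in V_{w'}(y)}\{\cdots\}$, and quantifying over all vertices $x$ of $\tightspan{w}{P}$ yields $\lambda\le\alpha^w_{w'}$ as an iff.

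At the boundary $\lambda=0$, the polyhedron $\envelope{0}{P}$ is the common recession cone of every $\envelope{u}{P}$, so $\envelope{w}{P}+\envelope{0}{P}=\envelope{w}{P}$ and the decomposition is trivially coherent; moreover $\alpha^w_{w'}\ge 0$ since the ratios defining it are nonnegative, so $\lambda=0$ lies in $[0,\alpha^w_{w'}]$. For $\lambda<0$ the vertices of $\envelope{\lambda w'}{P}$ are now $|\lambda|$ times the vertices of $\envelope{-w'}{P}$, which corresponds to the subdivision opposite to $\subdivision{w'}{P}$ (coming from the upper, rather than lower, faces of the lift by $w'$); via Corollary~\ref{cor:coherent} the common refinement of this opposite subdivision with $\subdivision{\tilde w}{P}$ cannot equal $\subdivision{w}{P}$ outside the degenerate situation in which $\subdivision{w'}{P}$ is trivial, and this latter case is already subsumed by the convention $\min_{v\in\emptyset}=+\infty$ that forces $\alpha^w_{w'}=+\infty$.

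The main technical care I expect to need is the bookkeeping of vertex correspondences: that vertices of $\tightspan{w}{P}$ coincide with vertices of $\envelope{w}{P}$ (since $0$-dimensional faces are automatically bounded), that scaling bijects vertices, and that the quantifier order in the definition of $\alpha^w_{w'}$ matches the logical order of the existence/universality in the decomposition problem. Once these are pinned down, the argument is essentially a single algebraic manipulation of the envelope inequalities.
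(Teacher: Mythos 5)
Your proof follows the same route as the paper's: reduce coherence via Lemma~\ref{lem:coherent} to a vertex-decomposition condition on $\tightspan{w}{P}$, unpack the envelope inequalities vertex-by-vertex, and divide to arrive at the ratio condition defining $\alpha^w_{w'}$. The chain of inequalities you write down is exactly the paper's sequence of equivalences; you are in fact a bit more explicit about two facts the paper uses silently, namely the rescaling identity $\envelope{\lambda w'}{P}=\lambda\,\envelope{w'}{P}$ (valid only for $\lambda>0$) and the observation that for $v\notin V_{w'}(y)$ both sides of $\scp{v}{x}+w(v)\ge\lambda(\scp{v}{y}+w'(v))$ are nonnegative so the inequality is automatic.

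Two remarks. First, a small imprecision of labeling: the equivalence you actually establish (``there exists a vertex $y$ of $\envelope{w'}{P}$ with $x-\lambda y\in\envelope{\tilde w}{P}$'') corresponds to item~(iii) of Lemma~\ref{lem:coherent}, not item~(iv), because you do not insist that $x-\lambda y$ be a \emph{vertex} of $\tightspan{\tilde w}{P}$; the two criteria are equivalent, but calling it (iv) muddies the point. Second, your treatment of $\lambda<0$ is the one place where the argument is asserted rather than proved: the claim that the common refinement of $\subdivision{-w'}{P}$ with $\subdivision{\tilde w}{P}$ cannot equal $\subdivision{w}{P}$ (outside a degenerate case) is precisely the content of the non-coherence statement, so invoking Corollary~\ref{cor:coherent} and then announcing the conclusion is circular. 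To make this case airtight one should argue directly (e.g.\ from Lemma~\ref{lem:coherent}(iv) applied with $\envelope{\lambda w'}{P}=\lvert\lambda\rvert\,\envelope{-w'}{P}$, or from a dimension/refinement comparison of the tight spans). This is a real but mild gap; the paper's own proof is equally silent about $\lambda<0$, limiting itself to the chain of equivalences that presupposes $\lambda>0$.
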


\begin{proof}
  Assume that $w=\tilde w+\lambda w'$ is coherent.  By Lemma~\ref{lem:coherent} for each
  vertex $x$ of $\envelope{w}{P}$ there is a vertex $x'$ of $\envelope{w'}{P}$ such that
  $x-\lambda x'$ is a vertex of $\envelope{\tilde w}{P}$.  We arrive at the following sequence
  of equivalences:
  \begin{align*}
    x-\lambda x'\in\tightspan{\tilde w}{P} \
    &\Longleftrightarrow \ -w(v)+\lambda w'(v) \ \le \ \scp {v}{x-\lambda x'}
     \quad \text{for all $v\in\Vertices{P}$} \\
    &\Longleftrightarrow \ \lambda(\scp{v}{x'}+w'(v)) \ \le \ \scp{v}{x}+w(v)
     \quad \text{for all $v\in\Vertices{P}$} \\
    &\Longleftrightarrow \ \lambda \ \le \ \frac{\scp{v}{x}+w(v)}{\scp{v}{x'}+w'(v)}
     \quad \text{for all $v\in V_{w'}(x')$} \\
    &\Longleftrightarrow  \ \lambda \ \le \ \min_{v\in V_{w'}(x')}
    \left\{ \frac{\scp{v}{x}+w(v)}{\scp{v}{x'}+w'(v)} \right\} \, .
  \end{align*}
  For each vertex $x$ of $\envelope{w}{P}$ there must be some vertex $x'$ of $\envelope{w'}{P}$
  such that these inequalities hold, and this gives the claim.
\end{proof}

\begin{corollary}
  For two weight function $w$ and $w'$ of $P$ we have
  \[
  \alpha^w_{w'} \ = \ \sup \SetOf{\lambda\ge 0}{(w-\lambda w',\lambda w') \text{ is a
      coherent decomposition of } w} \, .
  \]
\end{corollary}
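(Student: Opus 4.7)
The plan is to read this statement off directly from Proposition~\ref{prop:coherent}, which already pins down the precise range of coherent $\lambda$. Setting $\tilde w := w - \lambda w'$, that proposition says the decomposition $w = (w - \lambda w') + \lambda w'$ is coherent if and only if $0 \leq \lambda \leq \alpha^w_{w'}$. Hence the set
\begin{equation*}
S \ := \ \SetOf{\lambda \geq 0}{(w-\lambda w',\, \lambda w') \text{ is a coherent decomposition of } w}
\end{equation*}
is precisely the closed interval $[0, \alpha^w_{w'}]$, whose supremum is $\alpha^w_{w'}$.

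The only small point I would spell out is that $S$ is nonempty (so that the supremum is well-defined and, a posteriori, $\alpha^w_{w'} \geq 0$). This is immediate from $\lambda = 0$: the trivial splitting $w = w + 0$ is coherent because $\envelope{0}{P}$ is the recession cone of $\envelope{w}{P}$, so $\envelope{w}{P} + \envelope{0}{P} = \envelope{w}{P} = \envelope{w+0}{P}$. There is essentially no obstacle; all the substantive work has been carried out in Proposition~\ref{prop:coherent}, and the corollary is a purely formal restatement of its quantitative content.
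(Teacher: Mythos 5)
Your proof is correct and matches the paper's (implicit) approach exactly: the corollary is an immediate restatement of Proposition~\ref{prop:coherent}, which says the set of admissible $\lambda$ is the interval $[0,\alpha^w_{w'}]$. Your extra remark that $\lambda=0$ always gives a coherent decomposition is a sensible sanity check (and is also visible directly from the definition of $\alpha^w_{w'}$, all of whose defining quotients are non-negative), but it is not essential.
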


\begin{corollary}
  If $w$ and $w'$ are weight functions then $\subdivision{w}{P}=\subdivision{w'}{P}$ if
  and only if $\alpha^w_{w'}>0$ and $\alpha^{w'}_w>0$.
\end{corollary}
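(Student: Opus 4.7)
I would prove the sharper one-sided equivalence
\[
 \alpha^w_{w'} > 0 \ \Longleftrightarrow \ \subdivision{w}{P} \text{ is a refinement of } \subdivision{w'}{P},
\]
from which the corollary follows at once by symmetry: two polytopal subdivisions of $P$ coincide precisely when each refines the other. The argument runs directly from the definition \eqref{eq:coherency-index}. Because $x \in \envelope{w}{P}$ satisfies $Vx \ge -w$, the numerator $\scp{v}{x}+w(v)$ is always nonnegative, and it vanishes exactly when $v$ lies in the maximal cell $F_w(x)$ of $\subdivision{w}{P}$ dual to $x$ (using the duality of Proposition~\ref{prop:duality}, as already exploited in the proof of Corollary~\ref{cor:coherent}). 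By definition of $V_{w'}(x')$, every denominator $\scp{v}{x'}+w'(v)$ with $v \in V_{w'}(x')$ is strictly positive. Hence every ratio in \eqref{eq:coherency-index} is nonnegative, and the innermost minimum is strictly positive iff $\scp{v}{x}+w(v)>0$ for all $v \in V_{w'}(x')$, i.e.\ iff the vertex set of $F_w(x)$ is disjoint from $V_{w'}(x')$. Since the complement of $V_{w'}(x')$ in $\Vertices{P}$ is exactly the vertex set of $F_{w'}(x')$, this says $F_w(x) \subseteq F_{w'}(x')$.

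Passing to the outer max over $x'$ and min over $x$, the condition $\alpha^w_{w'}>0$ becomes: for every maximal cell of $\subdivision{w}{P}$ there is a maximal cell of $\subdivision{w'}{P}$ containing it. As every cell of a polytopal subdivision is a face of a maximal cell, this is equivalent to $\subdivision{w}{P}$ being a refinement of $\subdivision{w'}{P}$. Swapping the roles of $w$ and $w'$ gives the analogous characterization of $\alpha^{w'}_w > 0$, and combining the two refinement relations yields the corollary.

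The only delicate point is the sign analysis in the middle step: one must carry through strict versus non-strict inequalities carefully and handle the degenerate case $V_{w'}(x')=\emptyset$. The latter can occur only when $\subdivision{w'}{P}$ is the trivial subdivision $\{P\}$, in which case any $\subdivision{w}{P}$ refines it and the inner minimum (over an empty index set) is naturally read as $+\infty$, so the equivalence still holds. Everything after that is bookkeeping; in particular, no appeal to Proposition~\ref{prop:coherent} is strictly needed, though it could be used to give a second, less direct proof of the forward implication.
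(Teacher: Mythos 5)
Your proof is correct, and it takes a genuinely more elementary route than the paper intends. The paper states this corollary without proof, immediately after Proposition~\ref{prop:coherent} and the corollary $\alpha^w_{w'}=\sup\{\lambda\ge 0 : (w-\lambda w',\lambda w') \text{ coherent}\}$, so the intended argument is: $\alpha^w_{w'}>0$ gives coherence of $(w-\lambda w',\lambda w')$ for small $\lambda>0$, Corollary~\ref{cor:coherent} then gives that $\subdivision{w}{P}$ is the common refinement of $\subdivision{w-\lambda w'}{P}$ and $\subdivision{w'}{P}$, hence refines $\subdivision{w'}{P}$, and one symmetrizes. The converse via this route actually needs a word about $w-\lambda w'$ remaining in $S[w]$ for small $\lambda>0$ (using that $S[w]$ is the relative interior of a cone containing $w'$ in its closure). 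You bypass all of this by reading off the sign of each ratio in \eqref{eq:coherency-index} directly through the duality $F_w(x)=\conv\{v: \scp{v}{x}=-w(v)\}$, and you obtain the cleaner one-sided statement that $\alpha^w_{w'}>0$ holds iff every maximal cell $F_w(x)$ is contained in some $F_{w'}(x')$, i.e.\ iff $\subdivision{w}{P}$ refines $\subdivision{w'}{P}$. That makes the forward implication (pick $x'$ with $F_{w'}(x')=F_w(x)$) trivial. The only point worth stating a bit more carefully is the one you flag: when $V_{w'}(x')=\emptyset$ the cell $F_{w'}(x')$ is all of $P$, so $\subdivision{w'}{P}$ is trivial and the convention $\min\emptyset=+\infty$ is consistent; this is exactly what you say, so your proof is complete.
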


The set of all regular subdivisions of the convex polytope $P$ is known to have an
interesting structure (see \cite[Chapter 5]{Triangulations} for the details): For a weight function $w\in\RR^n$ of $P$ we consider the set
$S[w]\subset \RR^n$ of all weight functions that are equivalent to~$w$, that is,
\[
S[w] \ := \ \SetOf{x \in \RR^n}{\subdivision{x}{P} = \subdivision{w}{P}} \, .
\]
This set is called the \emph{secondary cone} of $P$ with respect to $w$.  It can be shown
(for instance, see \cite[Corollary 5.2.10]{Triangulations}) that $S[w]$ is indeed a
polyhedral cone and that the set of all $S[w]$ (for all $w$) forms a polyhedral fan
$\SecondaryFan{P}$, called the \emph{secondary fan} of $P$.

It is easily verified that $S[0]$ is the set of all (restrictions of) affine linear
functions and that it is the lineality space of every cone in the secondary fan.  So this fan
can be regarded in the quotient space $\RR^n \Quotient S[0]\cong \RR^{n-d-1}$. If there is
no change for confusion we will identify $w\in\RR^n$ and its image in $\RR^n \Quotient S[0]$.
Furthermore, the secondary fan can be cut with the unit sphere to get a (spherical) polytopal complex on
the set of rays in the fan. This complex carries the same information as the fan itself and will
also be identified with it.

It is a famous result by Gel{\cprime}fand, Kapranov, and Zelevinsky \cite[Theorem 1.7]{GKZ94},
that the secondary fan is the normal fan of a polytope, the \emph{secondary polytope}
$\SecondaryPolytope{P}$ of $P$. This polytope admits a realization as the convex hull of
the so-called \emph{GKZ-vectors} of all (regular) triangulations. The GKZ-vector
$x_\Delta\in \RR^n$ of a triangulation $\Delta$ is defined as ${(x_\Delta)}_v:=\sum_{S}
\vol S$ for all $v\in\Vertices P$, where the sum ranges over all full-dimensional
simplices $S\in\Delta$ which contain~$v$.

A description in terms of inequalities is given by Lee \cite[Section 17.6, Result~4]{Lee97}: The affine hull of $\SecondaryPolytope{P}\subset\RR^n$ is given by the $d+1$ equations
\begin{equation}\label{eq:secPoly-affineHull}
\begin{aligned}
  \sum_{v\in\Vertices P}x_v   \ &= \ (d+1)d\vol P\text{ and}\\
  \sum_{v\in\Vertices P}x_v v \ &= \ ((d+1)\vol P) c_P\,,
\end{aligned}
\end{equation} 
where $c_P$ denotes the centroid of $P$ and $\vol$ denotes the $d$-dimensional volume in
the affine span of $P$, which we can identify with $\RR^d$. The facet defining
inequalities of $\SecondaryPolytope{P}$ are
\begin{align}\label{eq:secPoly-facets}
 \sum_{v\in\Vertices P}w(v)x_v \ \geq \ (d+1) \sum_{Q\in\subdivision{w}{P}}\vol Q \bar w(c_Q)\,,
\end{align}
for all coarsest regular subdivisions $\subdivision{w}{P}$ defined by a weight $w$. Here
$\bar w:P\mapsto \RR$ denotes the piecewise-linear convex function whose graph is given by
the lower facets of $\lift{w}{P}$.

A weight function $w$ such that for all weight functions $w'$ with $\alpha^w_{w'} > 0$ we
have $w'=\lambda w$ (in $\RR^n \Quotient S[0]$) for some $\lambda>0$ is called \emph{prime}.
The set of all prime weight functions for a given polytope P is denoted $\PrimeWeights{P}$.
By this we get directly:

\begin{proposition}\label{prop:primeweights}
  The equivalence classes of prime weights correspond to the extremal rays of the
  secondary fan (and hence to the coarsest regular subdivisions or, equivalently, to the
  facets of the secondary polytope).
\end{proposition}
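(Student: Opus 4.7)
The plan is to identify the primality of $w$ with the geometric condition that $[w]$ spans an extremal ray of $\SecondaryFan{P}$, equivalently that $\subdivision{w}{P}$ is a coarsest non-trivial regular subdivision. The bridge I need is the equivalence
\[
  \alpha^w_{w'} \, > \, 0 \iff \subdivision{w'}{P} \text{ is a coarsening of } \subdivision{w}{P}.
\]

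First I would establish this equivalence. The forward direction is immediate: by Proposition~\ref{prop:coherent}, $\alpha^w_{w'}>0$ supplies some $\lambda>0$ making $(w-\lambda w',\lambda w')$ coherent, and Corollary~\ref{cor:coherent} then exhibits $\subdivision{w}{P}$ as the common refinement of $\subdivision{w-\lambda w'}{P}$ and $\subdivision{w'}{P}$, so in particular $\subdivision{w'}{P}$ is coarsened by $\subdivision{w}{P}$. For the converse, I would use the standard bijection between coarsenings of $\subdivision{w}{P}$ and faces of $\overline{S[w]}$ modulo $S[0]$ to replace $w'$ by a representative lying in $\overline{S[w]}$. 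Since $S[w]$ is the relative interior of $\overline{S[w]}$, the perturbation $w-\lambda w'$ remains in $S[w]$ for all sufficiently small $\lambda>0$, so $\subdivision{w-\lambda w'}{P}=\subdivision{w}{P}$; this subdivision is then trivially the common refinement of itself and the coarsening $\subdivision{w'}{P}$, and Corollary~\ref{cor:coherent} yields $\alpha^w_{w'}\ge\lambda>0$.

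With the equivalence in hand, the proposition follows by direct translation. Recall that $w'=\lambda w$ in $\RR^n/S[0]$ for some $\lambda>0$ means precisely that $[w']$ lies on the open ray $\RR_{>0}[w]$, and under the assumption $\alpha^w_{w'}>0$ (equivalently, $\subdivision{w'}{P}$ coarsens $\subdivision{w}{P}$) this forces $\subdivision{w'}{P}=\subdivision{w}{P}$. Thus $w$ is prime iff the only non-trivial coarsening of $\subdivision{w}{P}$ is $\subdivision{w}{P}$ itself, iff $\subdivision{w}{P}$ is a coarsest non-trivial regular subdivision. By the theory of the secondary fan recalled just before the proposition, these are precisely the subdivisions corresponding to extremal rays of $\SecondaryFan{P}$, and by the Gel\cprime fand--Kapranov--Zelevinsky theorem also to the facets of $\SecondaryPolytope{P}$.

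The main technical point is the converse direction of the displayed equivalence, where one must produce a concrete positive $\lambda$ making the decomposition coherent. This rests on the standard fact that $S[w]$ equals the relative interior of its closure $\overline{S[w]}$ (e.g., \cite[Corollary 5.2.10]{Triangulations}), so that small moves from $w$ along any direction in the linear span of $\overline{S[w]}$ stay within $S[w]$ and therefore preserve the subdivision.
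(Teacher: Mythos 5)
The paper offers no explicit proof here; it simply says ``By this we get directly,'' treating the proposition as an immediate consequence of the definitions and the secondary-fan material just recalled. Your argument correctly supplies exactly those details along the intended route: you derive from Proposition~\ref{prop:coherent} and Corollary~\ref{cor:coherent} the qualitative equivalence that $\alpha^w_{w'}>0$ holds if and only if $\subdivision{w'}{P}$ coarsens $\subdivision{w}{P}$ (equivalently $[w']\in\overline{S[w]}/S[0]$), with the correct perturbation argument for the converse (any representative of $[w']$ lies in $\overline{S[w]}\subseteq\lin\overline{S[w]}$, so a small move $w-\lambda w'$ stays in the relatively open set $S[w]$), and then translate primality into one-dimensionality of $\overline{S[w]}/S[0]$. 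The only tacit step is the direction ``coarsest $\Rightarrow$ prime'' in your final chain of equivalences: from ``the only non-trivial coarsening is $\subdivision{w}{P}$ itself'' one must still conclude that every $w'$ with $\subdivision{w'}{P}=\subdivision{w}{P}$ satisfies $[w']\in\RR_{>0}[w]$, which uses that a cone having no proper non-trivial faces modulo its lineality space is one-dimensional, whence $S[w]/S[0]$ is precisely the open ray $\RR_{>0}[w]$. This is standard secondary-fan theory, so it is not a genuine gap, but it would be worth stating explicitly.
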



The following is a reformulation of the fact that the set of all equivalence classes of weight
functions forms a fan (the secondary fan).

\begin{theorem}\label{thm:primeweights}
  Each weight function $w$ on a polytope $P$ can be decomposed into a coherent sum of
  prime weight functions, that is, there are $p_1,\dots,p_k\in\PrimeWeights{P}$ such that
  $w=p_1+\dots+p_k$ is a coherent decomposition.
\end{theorem}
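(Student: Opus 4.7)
The plan is to prove this by induction on the dimension of the secondary cone $S[w]$ modulo its lineality space $S[0]$, using the coherency index as the main vehicle. In essence, the theorem says that the cone $S[w]$ is positively spanned by its extremal rays, which are generated by prime weights thanks to Proposition~\ref{prop:primeweights}, together with the assertion that a walk from $w$ to the origin along these rays can always be taken coherently.

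For the base case, if $\dim(S[w]/S[0])\leq 1$ then either $w\in S[0]$ (so $\subdivision{w}{P}$ is the trivial subdivision and the empty sum suffices) or $w$ spans an extremal ray of $\SecondaryFan{P}$, and so $w$ itself is prime by Proposition~\ref{prop:primeweights}. For the inductive step, pick any prime weight $p$ generating an extremal ray of $S[w]$; such $p$ exists because $S[w]/S[0]$ is a pointed polyhedral cone. I first claim that $\alpha^w_p>0$. Since $w$ lies in the relative interior of $S[w]$ and $p$ lies in its closure, for all sufficiently small $\lambda>0$ the weight $w-\lambda p$ still lies in $S[w]$, hence $\subdivision{w-\lambda p}{P}=\subdivision{w}{P}$. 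On the other hand, $\subdivision{p}{P}$ is a coarsening of $\subdivision{w}{P}$ by the face--refinement correspondence of the secondary fan, so $\subdivision{w}{P}$ is trivially the common refinement of $\subdivision{w-\lambda p}{P}$ and $\subdivision{p}{P}$. By Corollary~\ref{cor:coherent} the decomposition $w=(w-\lambda p)+\lambda p$ is coherent, and Proposition~\ref{prop:coherent} then forces $\lambda\leq\alpha^w_p$, giving $\alpha^w_p>0$.

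Now set $\lambda^*:=\alpha^w_p$ and $\tilde w:=w-\lambda^* p$. Proposition~\ref{prop:coherent} yields that $w=\tilde w+\lambda^* p$ is a coherent decomposition with $\lambda^*p$ prime. The crux is the dimension-drop step: I would argue that $\dim(S[\tilde w]/S[0])<\dim(S[w]/S[0])$, so that the inductive hypothesis applies and produces a coherent decomposition $\tilde w=p_2+\dots+p_k$ into primes. Concatenating, and applying Corollary~\ref{cor:coherent} iteratively to the pairs $(p_2+\cdots+p_j,\,p_{j+1})$, one obtains the coherent decomposition $w=\lambda^*p+p_2+\cdots+p_k$ claimed by the theorem.

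The main obstacle is the dimension-drop: proving that subtracting the maximal coherent amount of $p$ genuinely moves $\tilde w$ onto a proper face of $S[w]$. The intuition is that $\alpha^w_p$, being the supremum of those $\lambda$ for which $w=(w-\lambda p)+\lambda p$ is coherent, is attained exactly when $\subdivision{w-\lambda p}{P}$ first becomes strictly coarser than $\subdivision{w}{P}$, i.e., when $w-\lambda p$ leaves the relative interior of $S[w]$ and lands on a proper face in $\SecondaryFan{P}$. Making this rigorous requires unpacking the min--max--min formula~\eqref{eq:coherency-index} in terms of the vertex--face correspondence from Proposition~\ref{prop:duality}: the critical vertex pair $(x,x')$ attaining the minimum in~\eqref{eq:coherency-index} produces a new vertex $x-\lambda^* x'$ of $\envelope{\tilde w}{P}$ whose dual cell in $\subdivision{\tilde w}{P}$ strictly contains the corresponding cells of $\subdivision{w}{P}$, witnessing a coarsening and hence the descent into a lower-dimensional face of the secondary fan.
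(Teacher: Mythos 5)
Your approach is correct in outline but genuinely different from the paper's. The paper's proof is a one-liner: since $w$ lies in some cone $S[w]$ of the secondary fan, write $w=\sum\lambda_i r_i$ with $\lambda_i>0$ over the extremal rays $r_i$ of that cone; each $\subdivision{r_i}{P}$ is a coarsening of $\subdivision{w}{P}$ because the $r_i$ are faces of $S[w]$, so $\subdivision{w}{P}$ is automatically the common refinement and Corollary~\ref{cor:coherent} gives coherence in one stroke, while Proposition~\ref{prop:primeweights} identifies each $\lambda_i r_i$ as prime. You instead rebuild that combination one ray at a time by induction on $\dim(S[w]/S[0])$, peeling off the maximal coherent multiple $\alpha^w_p$ of a chosen extremal ray $p$. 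What this buys is an explicit algorithmic flavor and direct contact with the coherency index; what it costs is the extra ``dimension-drop'' lemma, which the paper never needs.

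On that crux: your argument that $\alpha^w_p>0$ is fine (though note that for small $\lambda>0$ the point $w-\lambda p$ lies in the \emph{relative interior} of $S[w]$, which is what actually forces $\subdivision{w-\lambda p}{P}=\subdivision{w}{P}$; lying merely in the closed cone would not). The dimension-drop, however, is only gestured at, and the route you suggest---unpacking the $\min$--$\max$--$\min$ formula~\eqref{eq:coherency-index}---is both harder than necessary and not carried out, so as written there is a gap. A clean and complete argument is available from the tools you already cite. By Proposition~\ref{prop:coherent} the value $\lambda^*=\alpha^w_p$ lies in the coherence range, so Corollary~\ref{cor:coherent} gives that $\subdivision{w}{P}$ is the common refinement of $\subdivision{\tilde w}{P}$ and $\subdivision{p}{P}$; in particular $\subdivision{\tilde w}{P}$ coarsens $\subdivision{w}{P}$, i.e., $\tilde w\in S[w]$ (closed). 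If $\tilde w$ were in the relative interior of $S[w]$, then by openness some $\lambda>\alpha^w_p$ would still have $w-\lambda p\in\operatorname{relint}S[w]$, hence $\subdivision{w-\lambda p}{P}=\subdivision{w}{P}$; since $\subdivision{p}{P}$ coarsens $\subdivision{w}{P}$, that decomposition would also be coherent by Corollary~\ref{cor:coherent}, contradicting the maximality in Proposition~\ref{prop:coherent}. Therefore $\tilde w$ lies in a proper face of $S[w]$, of strictly smaller dimension, and the induction closes. With this repair your proof is a valid, if more laborious, alternative to the paper's.
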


\begin{proof}
  Each weight function $w$ is contained in some cone of the secondary fan of $P$.  Hence
  there are extremal rays $r_1,\dots,r_k$ of the secondary cone and positive real numbers
  $\lambda_1,\dots,\lambda_k$ such that $w=\lambda_1 r_1 + \dots + \lambda_k r_k$; by
  construction, this decomposition is coherent by Lemma \ref{lem:coherent}.  From
  Proposition \ref{prop:primeweights} we know that $p_i:=\lambda_i r_i$ is a prime weight,
  and the claim follows.
\end{proof}

Note that this decomposition is usually not unique.

\section{Splits and the Split Decomposition Theorem}\label{sec:splits}

A \emph{split} $S$ of a polytope $P$ is a decomposition of $P$ without new vertices which
has exactly two maximal cells denoted by $S_+$ and $S_-$. As above, we assume that
$P\subset\RR^{d+1}$ is $d$-dimensional and that $\aff P$ does not contain the origin. Then
the linear span of $S_+\cap S_-$ is a linear hyperplane $H_S$, the \emph{split hyperplane}
of $S$ with respect to $P$. Since $S$ does not induce any new vertices, in particular,
$H_S$ does not meet any edge of $P$ in its relative interior.  Conversely, each hyperplane
which separates $P$ and which does not separate any edge defines a split of $P$.
Furthermore, it is easy to see, that a hyperplane defines a split of $P$ if and only if it
defines a split on all facets of $P$ that it meets in the interior.

The following observation is immediate.  Note that it implies that a hyperplane defines a
split if and only if its does not separate any edge.

\begin{observation}\label{obs:facets}
  A hyperplane that meets $P$ in its interior is a split hyperplane of $P$ if and only if it
  intersects each of its facets $F$ in either a split hyperplane of $F$ or in a face of $F$.
\end{observation}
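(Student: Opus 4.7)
The plan is to prove the two directions separately, using in both cases the characterization stated in the paragraph preceding the observation: a hyperplane meeting $\Interior P$ defines a split of $P$ if and only if it does not separate any edge of $P$ in its relative interior.

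For the forward direction, suppose $H$ is a split hyperplane of $P$, and let $F$ be a facet of $P$. I would distinguish two cases according to whether $H$ meets $\Interior F$. If $H \cap \Interior F \neq \emptyset$, then $H \cap \aff F$ is an affine hyperplane in $\aff F$ that meets $F$ in its interior; since every edge of $F$ is an edge of $P$, the hyperplane $H \cap \aff F$ does not cut any edge of $F$ in its relative interior, so by the characterization above (applied inside $\aff F$) it is a split hyperplane of $F$. If instead $H \cap F \subseteq \partial F$, then $H \cap \aff F$ is a hyperplane of $\aff F$ that meets the convex body $F$ only in its boundary; by convexity this forces $H$ to be a supporting hyperplane of $F$, so $H \cap F$ is a face of $F$.

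For the converse, suppose that for every facet $F$ of $P$ the intersection $H \cap F$ is either a split hyperplane of $F$ or a face of $F$. It suffices to show that $H$ does not meet any edge of $P$ in its relative interior. Let $e$ be an edge of $P$, and choose a facet $F$ containing $e$. In the first alternative, $H \cap F$ is a split hyperplane of $F$, and by definition of a split the edge $e$ of $F$ is not cut in its relative interior. In the second alternative, $H \cap F$ is a face of $F$; then $H \cap e = (H \cap F) \cap e$ is a face of $e$, hence empty, a vertex of $e$, or all of $e$. In every case, $\Interior e$ is not met by $H$, and the claim follows.

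The only mild subtlety is the topological argument that a hyperplane in $\aff F$ meeting $F$ solely on its boundary must be a supporting hyperplane; this is where convexity of $F$ and the fact that $H \cap \aff F$ has codimension one in $\aff F$ enter, and it is the step I would make sure to state carefully. Everything else is a direct application of the edge-cutting criterion already established in the excerpt.
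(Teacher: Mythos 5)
Your proof is correct, and it fills in the details of an assertion that the paper itself merely labels ``immediate'' without supplying an argument, so there is no proof in the paper to compare against directly. Both directions are handled cleanly: the forward direction via the dichotomy ``$H$ meets $\Interior F$ or not,'' together with the facts that edges of $F$ are edges of $P$ and that a hyperplane avoiding $\Interior F$ supports $F$; the converse via the observation that every edge of $P$ lies on some facet, so the facet-wise hypothesis rules out cutting any edge in its relative interior. You correctly flagged the only step needing care—the standard convexity fact that a hyperplane in $\aff F$ disjoint from $\Interior F$ either misses $F$ or supports it, hence meets $F$ in a face.

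One remark on logical ordering. You rely in both directions on the edge-cutting characterization stated in the paragraph preceding the Observation (``each hyperplane which separates $P$ and which does not separate any edge defines a split''). This is legitimate: the paper asserts that criterion \emph{before} the Observation, so there is no circularity. However, the sentence immediately after the Observation—``Note that it implies that a hyperplane defines a split if and only if it does not separate any edge''—suggests the authors also had in mind the reverse derivation, namely proving the Observation first (essentially by restricting the induced subdivision to facets and noting that ``no new vertices'' is inherited) and then obtaining the edge criterion by induction on dimension. Your route and that one are essentially equivalent in content, and since the paper states the edge criterion independently in advance, your proof stands as written.

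A tiny point you could make explicit: in the case $H\cap F\subseteq\partial F$, the intersection $H\cap\aff F$ is genuinely a proper hyperplane of $\aff F$ (the degenerate case $\aff F\subseteq H$ is excluded because $H$ meets $\Interior P$ while $\aff F$ supports $P$), so the supporting-hyperplane argument applies without further qualification.
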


\begin{remark}\label{rem:oriented-matroid}
  Since the notion of facets and faces of a polytope does only depend on the \emph{oriented
    matroid} of~$P$ it follows from Observation \ref{obs:facets} that the set splits of a
  polytope only depend on the oriented matroid of~$P$. This is in contrast to the fact
  that the set of regular triangulations (see below), in general, depends on the specific
  coordinatization.
\end{remark}

The running theme of this paper is: If a polytope admits sufficiently many splits then
interesting things happen.  However, one should keep in mind that there are many polytopes
without a single split; such polytopes are called \emph{unsplittable}.

\begin{remark}\label{rem:simple}
  If $v$ is a vertex of $P$ such that all neighbors of $v$ in $P$ are contained in a common
  hyperplane $H_v$ then $H_v$ defines a split $S_v$ of $P$.  Such a split is called the
  \emph{vertex split} with respect to~$v$.  For instance, if $P$ is simple then each
  vertex defines a vertex split.

  Since polygons are simple polytopes it follows, in particular, that an unsplittable
  polytope which is not a simplex is at least $3$-dimensional.  An unsplittable
  $3$-polytope has at least six vertices.  An example is a $3$-dimensional cross polytope
  whose vertices are perturbed into general position.
\end{remark}

\begin{proposition}
  Each $2$-neighborly polytope is unsplittable.
\end{proposition}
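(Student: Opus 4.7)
The plan is to argue by contradiction. Suppose $P$ is $2$-neighborly and that $P$ admits a split $S$ with split hyperplane $H_S$ and maximal cells $S_+$ and $S_-$. Since splits do not introduce new vertices, every vertex of $S_+$ and of $S_-$ is a vertex of $P$, and each such vertex lies on the weakly positive or weakly negative side of $H_S$ according to which cell contains it (vertices in $S_+\cap S_-$ lie on $H_S$ itself).

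The key observation is that both $S_+$ and $S_-$ must contain a vertex that lies \emph{strictly} off $H_S$. Indeed, $S_+$ is $d$-dimensional, so its vertices cannot all lie on the hyperplane $H_S$; the same holds for $S_-$. Choose $v_+\in\Vertices(S_+)\setminus H_S$ and $v_-\in\Vertices(S_-)\setminus H_S$. Then $v_+$ and $v_-$ are vertices of $P$ strictly on opposite sides of $H_S$, so the open segment $\conv\{v_+,v_-\}\setminus\{v_+,v_-\}$ meets $H_S$ in its relative interior.

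Now invoke $2$-neighborliness: the pair $\{v_+,v_-\}$ spans an edge of $P$. This edge is therefore cut by $H_S$ in its relative interior, which contradicts the defining property of a split hyperplane (that it does not meet any edge of $P$ in its relative interior, as recalled in the paragraph preceding Observation~\ref{obs:facets}). Hence no split exists, and $P$ is unsplittable.

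There is no real obstacle here; the only point requiring a moment of care is ruling out the degenerate situation where one of $S_+$ or $S_-$ would have all its vertices on $H_S$, which is immediately excluded by the fact that $S_\pm$ are full-dimensional cells of the subdivision of the $d$-polytope $P$.
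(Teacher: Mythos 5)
Your proof is correct and follows essentially the same argument as the paper: pick vertices of $P$ strictly on opposite sides of $H_S$, use $2$-neighborliness to conclude they span an edge, and derive a contradiction from the edge being cut by the split hyperplane. The paper phrases the choice as $v\in S_+\setminus S_-$ and $w\in S_-\setminus S_+$; you add the (correct, if slightly more than strictly necessary) justification via full-dimensionality of $S_\pm$.
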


\begin{proof}
  Assume that $S$ is a split of $P$, and $P$ is $2$-neighborly.  Recall that the latter
  property means that any two vertices of $P$ are joined by an edge.  Choose vertices
  $v\in S_+\setminus S_-$ and $w\in S_-\setminus S_+$. Then the segment $[v,w]$ is an edge
  of $P$ which is separated by the split hyperplane $H_S$.  This is a contradiction to the
  assumption that $S$ was a split of $P$.
\end{proof}

It is clear that splits yield coarsest subdivisions; but the following lemma says that
they even define facets of the secondary polytope.

\begin{lemma}\label{lem:regular}
  Splits are regular.
\end{lemma}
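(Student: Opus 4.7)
The plan is to realize the split $S = \{S_+, S_-\}$ as a regular subdivision by exhibiting an explicit weight function. First I would pick a linear functional $\ell$ on $\RR^{d+1}$ whose zero locus is the split hyperplane $H_S$, normalized so that $\ell \ge 0$ on $S_+$ and $\ell \le 0$ on $S_-$. Since a split introduces no new vertices, every vertex of $S_+$ or of $S_-$ is already a vertex of $P$, and in particular $w(v) := \max\{\ell(v), 0\}$ is a well-defined weight function on $\Vertices{P}$. I would then claim that $\subdivision{w}{P} = \{S_+, S_-\}$.

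To verify the claim, I would compare $w$ with the convex piecewise-linear function $g : \aff{P} \to \RR$ defined by $g(x) := \max\{\ell(x), 0\}$. By construction $g(v) = w(v)$ for every $v \in \Vertices{P}$, and $g|_P$ has exactly two maximal domains of linearity, namely $S_+$ (on which $g = \ell$) and $S_-$ (on which $g = 0$), meeting along $H_S \cap P$. The subdivision $\subdivision{w}{P}$ is read from the lower envelope of the lift $\lift{w}{P}$, which is the graph of the convex function
\[
\phi_w(x) \ := \ \min\left\{\sum_{v \in \Vertices{P}} \lambda_v w(v) \suchthat \sum_v \lambda_v v = x,\ \lambda \ge 0,\ \sum_v \lambda_v = 1\right\}
\]
on $P$. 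Hence it suffices to establish the identity $\phi_w = g|_P$: the two maximal domains of linearity of $\phi_w$ are then exactly the maximal cells of $\subdivision{w}{P}$.

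The inequality $g \le \phi_w$ on $P$ follows from Jensen's inequality applied to the convex function $g$: any convex combination $x = \sum \lambda_v v$ satisfies $g(x) \le \sum \lambda_v g(v) = \sum \lambda_v w(v)$, and taking the minimum over all such representations gives $g(x) \le \phi_w(x)$. For the reverse inequality, given $x \in S_+$ I would express $x$ as a convex combination of vertices of $S_+$, all of which are vertices of $P$ by the no-new-vertices property; since $w$ coincides with $\ell$ on those vertices, the corresponding weighted sum equals $\ell(x) = g(x)$, so $\phi_w(x) \le g(x)$. The analogous argument on $S_-$, where $w$ vanishes on the relevant vertices, yields $\phi_w(x) \le 0 = g(x)$. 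The main obstacle is precisely this lower bound on $\phi_w$, and the defining property of a split — that no edge of $P$ is cut by $H_S$ — is what guarantees that every point of $P$ can be written as a convex combination of vertices of $P$ from a single side of the split hyperplane, making the chosen representation possible.
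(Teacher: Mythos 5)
Your weight function $w(v)=\max\{\ell(v),0\}$ is the same as the paper's $w_S$ (with $\ell=a\cdot(\,\cdot\,)$, the normal to $H_S$), and you take the same approach; the paper simply declares that this $w$ "obviously" induces $S$, while you fill in the standard lower-envelope verification (Jensen's inequality for $g\le\phi_w$, and the no-new-vertices property to build a one-sided convex combination for $\phi_w\le g$). Correct, and in essence identical to the paper's proof.
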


\begin{proof}
  Let $S$ be a split of $P$.  We have to show that $S$ is induced by a weight function.
  Let $a$ be a normal vector of the split hyperplane $H_S$.  We define $w_S:\Vertices(P)\to\RR$ by
  \begin{equation}\label{eq:split:lifting}
    w_S(v) \ := \
    \begin{cases}
      \abs{av} & \text{if $v\in S_+$ \, ,} \\
      0 & \text{if $v\in S_-$ \, .}
    \end{cases}
  \end{equation}
  Note that this function is well-defined since for $v\in H_S=S_+\cap S_-$ we have
  $av=0$.  It is now obvious that $w$ induces the split $S$ on $P$.
\end{proof}

\begin{example}
  In Example~\ref{ex:hexagon:1} the three weight functions $w_1$, $w_2$, $w_3$ define
  splits of the hexagon $H$.
\end{example}

By specializing Equation~\eqref{eq:secPoly-facets}, a facet defining inequality for the
split $S$ is given by
\begin{align}\label{eq:split-equation}
\sum_{v\in \Vertices(P\cap S_+)}|av|x_v \ \geq \ |ac_{P\cap S_+}|(d+1) \vol(P\cap S_+)\,.
\end{align}
Note that $a$ is a normal vector of the split hyperplane $H_S$ as above, and $c_{P\cap
  S_+}$ is the centroid of the polytope $P\cap S_+$.  By taking the inequalities
\eqref{eq:split-equation} for all splits $S$ of $P$ together with the equations
\eqref{eq:secPoly-affineHull} we get an $(n-d-1)$-dimensional polyhedron $\SplitPoly{P}$
which we will call the \emph{split polyhedron} of $P$. Obviously, we have
$\SecondaryPolytope{P}\subseteq\SplitPoly{P}$ so the split polyhedron can be seen as an outer
``approximation'' of the secondary polytope. In fact, by Remark
\ref{rem:oriented-matroid}, $\SplitPoly{P}$ is a common ``approximation'' for the
secondary polytopes of all possible coordinatizations of the oriented matroid of $P$. If
$P$ has sufficiently many splits the split polyhedron is bounded; in this case
$\SplitPoly{P}$ is called the \emph{split polytope} of $P$.

One can show that each simple polytope has a bounded split polyhedron.  Here we give
two examples.

\begin{example}\label{ex:n-gon:1}
  Let $P$ be a an $n$-gon for $n\ge 4$.  Then each pair of non-neighboring vertices
  defines a split of~$P$.  Each triangulation is regular and, moreover, a split
  triangulation.

  The secondary polytope of $P$ is the associahedron $\Assoc_{n-3}$, which is a simple
  polytope of dimension $n-3$.  Since the only coarsest subdivisions of $P$ are the splits
  it follows that the split polytope of $P$ coincides with its secondary polytope.
\end{example}

\begin{example}\label{ex:3cube:1}
  The $74$ triangulations of the regular $3$-cube $C_3=[-1,1]^3$ are all regular, and $26$
  of them are induced by splits.  The total number of splits is $14$: There are eight
  vertex splits ($C$ being simple) and six splits defined by parallel pairs of diagonals
  in an opposite pair of cube facets.  The secondary polytope of $C$ is a $4$-polytope
  with $f$-vector $(74,152,100,22)$; see Pfeifle~\cite{Pfeifle00} for a complete
  description.

  The split polytope of $C_3$ is neither simplicial nor simple and has the $f$-vector
  $(22,60,52,14)$.  A Schlegel diagram is shown in Figure~\ref{fig:cube:split}.
\end{example}

\begin{example}
  There are nearly $88$ million regular triangulations of the $4$-cube $C_4=[-1,1]^4$ that
  come in $235,277$ equivalence classes.  The $4$-cube has four different types of splits:
  The vertex splits, the split obtained by cutting with $H:=\smallSetOf{x}{\sum x_i = 0}$
  (and its images under the symmetry group of the cube), and, finally, two kinds of splits
  induced by the two kinds of splits of the $3$-cube.  The split obtained from the vertex
  split of the $3$-cube is the one discussed in \cite[Example 20 (The missing
  split)]{Grier+08}. See also \cite{Grier+08} for a complete discussion of the secondary
  polytope of $C_4$.  Examples of triangulations of the $4$-cube that are induced by
  splits include the first two in \cite[Example~10 \& Figure~3]{Grier+08} and the one
  shown in Figure~\ref{fig:4-cube}.
\end{example}

\begin{figure}[htb]
  \includegraphics[width=.4\textwidth]{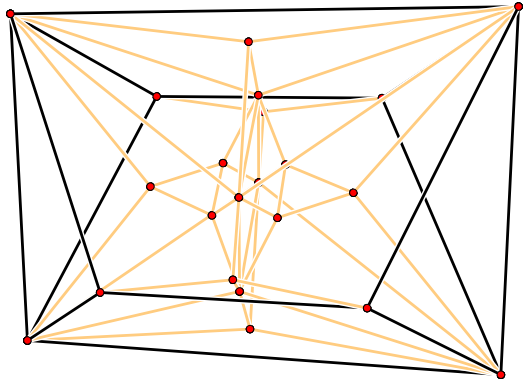}
  \caption{Schlegel diagram of the split polytope of the regular $3$-cube.}
  \label{fig:cube:split}
\end{figure}

A weight function $w$ on a polytope $P$ is called \emph{split prime} if for all splits $S$ of
$P$ we have $\alpha^w_{w_S}=0$.  The following can be seen as a generalization of Bandelt
and Dress \cite[Theorem 3]{BandeltDress92}, and as a reformulation of Hirai's Theorem 2.2
\cite{Hirai06}.

\begin{theorem}[Split Decomposition Theorem]\label{thm:splitdecomposition}
  Each weight function $w$ has a coherent decomposition
  \begin{equation}\label{eq:splitdecomposition}
  w \ = \ w_0 + \sum_{\text{$S$ split of $P$}} \lambda_S w_S \, ,
  \end{equation}
  where $w_0$ is split prime, and this is unique among all coherent decompositions of~$w$.
\end{theorem}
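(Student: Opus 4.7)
The plan is to reduce the theorem to Theorem~\ref{thm:primeweights} by sorting prime summands according to whether they come from splits, and then to read off uniqueness from the face structure of the secondary cone containing $w$.

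For existence, I apply Theorem~\ref{thm:primeweights} to write $w=p_1+\dots+p_k$ coherently with each $p_i\in\PrimeWeights{P}$. By Proposition~\ref{prop:primeweights} each $p_i$ spans an extremal ray of the secondary cone $\sigma$ of $w$ and equals $\mu_i w_{R_i}$ for some coarsest regular subdivision $R_i$. I partition $\{1,\dots,k\}$ into $I_{\mathrm s}=\{i:R_i\text{ is a split}\}$ and its complement $I_0$, and set $\lambda_S := \sum_{i\in I_{\mathrm s},\,R_i=S}\mu_i$ and $w_0:=\sum_{i\in I_0}p_i$. The regrouped sum $w=w_0+\sum_S\lambda_S w_S$ is still coherent because, by Corollary~\ref{cor:coherent}, coherence is the statement that $\subdivision{w}{P}$ is the common refinement of the summand subdivisions, and common refinement is associative. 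To check that $w_0$ is split prime, let $\tau$ be the face of $\bar\sigma$ spanned by the non-split extremal rays carrying the $p_i$ with $i\in I_0$; then $w_0\in\Interior\tau$, so $\subdivision{w_0}{P}$ is the subdivision associated to $\tau$. If some split $T$ had $\alpha^{w_0}_{w_T}>0$, Corollary~\ref{cor:coherent} would force $\subdivision{w_0}{P}$ to refine $\subdivision{w_T}{P}$, making $\nonnegRR w_T$ a one-dimensional face, i.e.\ an extremal ray, of $\tau$, contradicting its construction.

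For uniqueness, given two coherent decompositions $w=w_0+\sum_S\lambda_S w_S=w_0'+\sum_S\lambda_S'w_S$ of the stated form, I apply Theorem~\ref{thm:primeweights} separately to the split-prime remainders $w_0$ and $w_0'$ to refine each of them into a coherent sum of non-split primes. This exhibits $w$ as a non-negative combination of extremal rays of $\bar\sigma$ in two ways, and in both the \emph{split} rays appearing are precisely the $\nonnegRR w_S$ with nonzero coefficient. Split-primality of the remainder forces the non-split face of $\bar\sigma$ that it lies in to be intrinsic to $w$; matching total weight on each split ray then yields $\lambda_S=\lambda_S'$ and therefore $w_0=w_0'$.

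The main obstacle is making this matching step precise when the secondary cone $\bar\sigma$ fails to be simplicial, so that non-negative expansions of $w$ into extremal rays of $\bar\sigma$ are not themselves unique. The crucial rigidity to prove is that a split ray cannot be traded against a non-negative combination of non-split rays without destroying split-primality of the remainder. Existence reduces to bookkeeping on top of Theorem~\ref{thm:primeweights}; uniqueness is where the real work lies.
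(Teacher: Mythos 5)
Your existence argument has a genuine gap that is not merely ``bookkeeping.'' After invoking Theorem~\ref{thm:primeweights}, you pick \emph{one} expansion $w=\sum p_i$ into extremal rays of $\bar\sigma$ and declare $w_0:=\sum_{i\in I_0}p_i$. But when $\bar\sigma$ is not simplicial such expansions are not unique, and the chosen one may simply fail to use split rays that are nevertheless available. Concretely, the smallest face $\tau$ of $\bar\sigma$ containing $w_0$ is not ``the face spanned by the non-split $p_i$'' (a subset of extremal rays need not span a face at all); it is a face that may well have split rays among its extremal rays, even though $w_0$ is a positive combination of non-split ones. In that situation you do have $\alpha^{w_0}_{w_T}>0$ for some split $T$, your step ``$\nonnegRR w_T$ is an extremal ray of $\tau$'' goes through, and this is \emph{not} a contradiction --- it is exactly what happens. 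So the remainder you build need not be split prime. (A nudge towards a counterexample: in a non-simplicial secondary cone a relation of the form $r_1+r_3=w_{S_1}+w_{S_2}$ with $r_1,r_3$ non-split and $S_1,S_2$ splits would defeat your regrouping from the start, and the paper's own Example~\ref{ex:3cube:2} exhibits a secondary cell that is a bipyramid with both split and non-split rays, showing the non-simplicial situation you would need to rule out genuinely occurs.) For uniqueness you honestly flag the missing ``rigidity'' lemma yourself; that lemma is precisely what would also be needed to repair existence, so the two gaps are really one.

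The paper proceeds quite differently and sidesteps the non-simpliciality issue entirely. It sets $w_0 := w - \sum_{S}\alpha^w_{w_S}w_S$, i.e.\ it subtracts off, for \emph{every} split $S$, the maximal coherent multiple of $w_S$ as measured by the coherency index from Proposition~\ref{prop:coherent}. The heart of the argument is then geometric rather than cone-combinatorial: in the special case where $\subdivision{w}{P}$ is a common refinement of splits, each interior codimension-$1$ face of $\subdivision{w}{P}$ determines a \emph{unique} split that coarsens it, and $\alpha^w_{w_S}>0$ precisely when $H_S\cap P$ is such a face; this pins down both the coefficients and their uniqueness without ever choosing an expansion into prime rays. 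If you want to keep a route through Theorem~\ref{thm:primeweights}, you would at minimum have to prove a lemma saying that a split extremal ray of $\bar\sigma$ can never be ``absorbed'' into a non-negative combination of non-split extremal rays of the same cone --- and that lemma is essentially equivalent to the split decomposition theorem itself, so you would be arguing in a circle.
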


This is called the \emph{split decomposition} of $w$.

\begin{proof}
  We first consider the special case where the subdivision $\subdivision{w}{P}$ induced by
  $w$ is a common refinement of splits.  Then each face $F$ of codimension~$1$ in
  $\subdivision{w}{P}$ defines a unique split $S(F)$, namely the one with split hyperplane
  $H_{S(F)}=\lin F$.  Moreover, whenever $S$ is an arbitrary split of $P$ then
  $\alpha^w_{w_S}>0$ if and only if $H_S\cap P$ is a face of $\Delta_w$ of
  codimension~$1$.  This gives a coherent subdivision $w = \sum_S \alpha^w_{w_S} w_S$,
  where the sum ranges over all splits $S$ of $P$.  Note that the uniqueness follows
  from the fact that for each codimension-$1$-faces of $\Delta_w$ there is a unique split
  which coarsens it.

  For the general case, we let
  \[
  w_0 \ := \  w - \sum_{\text{$S$ split of $P$}} \alpha^{w}_{w_S} w_S \, .
  \]
  By construction, $w_0$ is split prime, and the uniqueness of the split decomposition of $w$
  follows from the uniqueness of the split decomposition of $w-w_0$.
\end{proof}

In fact, the sum in \eqref{eq:splitdecomposition} only runs over all splits in
$\cS(w):=\smallSetOf{w_S}{\alpha^w_{w_S}>0}$.  The uniqueness part of the theorem gives us
the following interesting corollary (see also Bandelt and Dress \cite[Corollary 5]{BandeltDress92}, and Hirai \cite[Proposition 3.6]{Hirai06}):

\begin{corollary}\label{cor:split-linear-independence}
  For a weight function $w$ the set $\cS(w)\cup\{w_0\}$ is linearly independent.  In
  particular, $\#\cS(w)\leq n-d-1$, if $\#\cS(w)= n-d-1$ then $w_0=0$, and if $\#\cS(w)=
  n-d-2$ then $w_0$ is a prime weight function.
\end{corollary}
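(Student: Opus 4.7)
The strategy is to use the uniqueness part of Theorem~\ref{thm:splitdecomposition} as a rigidity principle: any non-trivial linear dependence among the summands of the split decomposition of $w$ can be perturbed into a competing coherent split decomposition, contradicting uniqueness. The numerical consequences then follow from counting dimensions in the quotient $\RR^n\Quotient S[0]$, which has dimension $n-d-1$.

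For the linear independence claim, suppose toward a contradiction that $a_0 w_0+\sum_{S\in\cS(w)}a_S w_S\equiv 0\pmod{S[0]}$ is a non-trivial relation. Choose $t\neq 0$ with $|t|$ small enough that $1+ta_0>0$ and $\lambda_S+ta_S>0$ for every $S\in\cS(w)$. Adding $t$ times the vanishing relation to the original split decomposition gives, in $\RR^n\Quotient S[0]$,
\[
w \;=\; (1+ta_0)\,w_0\;+\;\sum_{S\in\cS(w)}(\lambda_S+ta_S)\,w_S \, .
\]
Each summand on the right is a positive scalar multiple of the corresponding one in the original decomposition and therefore induces the same regular subdivision; the scaling identity $\alpha^{cw}_{w_S}=c\,\alpha^{w}_{w_S}$ for $c>0$ shows that positive rescaling preserves split primality, so $(1+ta_0)\,w_0$ is still split prime. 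Consequently the displayed identity is a coherent split decomposition of $w$ distinct from the original, contradicting uniqueness. Dimension counting in the $(n-d-1)$-dimensional quotient now yields $\#\cS(w)\le n-d-1$ immediately, and equality forces $w_0=0$ because otherwise $\cS(w)\cup\{w_0\}$ would contain $n-d$ linearly independent vectors.

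For the remaining case $\#\cS(w)=n-d-2$ with $w_0\neq 0$, the set $\cS(w)\cup\{w_0\}$ is a basis of the quotient. If $w_0$ were not prime, Theorem~\ref{thm:primeweights} applied to $w_0$ produces a coherent decomposition $w_0=p_1+\dots+p_m$ into primes with $m\ge 2$ summands; the split primality of $w_0$ forces each $p_i$ to be non-split (any split $p_i$ would give $\alpha^{w_0}_{w_{p_i}}>0$) and therefore itself split prime. For each $i$ the weight $w-\sum_{j\ne i}p_j=p_i+\sum_S\lambda_Sw_S$ is then a coherent split decomposition with split-prime remainder $p_i$, so applying the already-proved linear independence to these peeled decompositions shows that every $p_i$ has a non-zero $w_0$-coefficient in the expansion along the basis $\cS(w)\cup\{w_0\}$. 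The main obstacle, and the most delicate part of the argument, is to combine these constraints with the relation $\sum_ip_i=w_0$ so as to produce yet another coherent split decomposition of $w$ distinct from the canonical one, thereby contradicting Theorem~\ref{thm:splitdecomposition} and forcing $m=1$; this ultimately rests on verifying that peeling off any partial sum of the $p_i$'s preserves coherence and split primality, which in turn is a direct consequence of the split primality of $w_0$ propagated through the coherent prime decomposition of $w_0$.
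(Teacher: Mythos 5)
Your linear-independence argument is correct in spirit and a genuine variant of the paper's: both hinge on the uniqueness in Theorem~\ref{thm:splitdecomposition}, but the paper rearranges the dependence into $\sum_{S\in\cS}\lambda_S w_S = \lambda_0 w_0 + \sum_{S\notin\cS}\lambda_S w_S$ with all coefficients nonnegative and then invokes uniqueness for the auxiliary weight $w':=\sum_{S\in\cS}\lambda_S w_S$, whereas you perturb the canonical decomposition of $w$ itself by a small multiple of the vanishing relation. Your route works, but two points deserve a word. First, you write the relation in $\RR^n\Quotient S[0]$; in $\RR^n$ the relation is $a_0w_0+\sum a_Sw_S=\ell$ for some $\ell\in S[0]$, so the perturbed identity reads $w=(1+ta_0)w_0-t\ell+\sum(\lambda_S+ta_S)w_S$, and one should note that $(1+ta_0)w_0-t\ell$ is still split prime because $\alpha^{\cdot}_{\cdot}$ is unchanged by adding an affine function. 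Second, you assert coherency of the perturbed decomposition without argument; this does follow (each summand is a positive multiple of the original, hence induces the same subdivision, and by Corollary~\ref{cor:coherent} coherency depends only on these subdivisions and their common refinement, which is unchanged), but the justification should be spelled out since it is the crux of the contradiction.

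The real problem is the case $\#\cS(w)=n-d-2$. You yourself flag the key step — combining the relation $\sum_ip_i=w_0$ with the peeled decompositions to manufacture a second coherent split decomposition of $w$ — as ``the main obstacle'' and then leave it unverified, gesturing that it ``ultimately rests on'' a further claim which is itself unjustified. This is a genuine gap, not a routine detail: a sum of split-prime weights need not be split prime, partial sums $\sum_{j\in J}p_j$ need not remain split prime, and it is not clear how the nonvanishing $w_0$-coordinates of the $p_i$'s force $m=1$. As it stands, the third assertion of the corollary is not proved. (For calibration: the paper itself dispatches this with the single sentence that ``the cardinality constraints now follow from'' the quotient having dimension $n-d-1$, which is also quite compressed — the intended argument is presumably that $\cS(w)\cup\{w_0\}$ is then a basis forcing the secondary cone to be full-dimensional with $w_0$ on an extremal ray — but you would need to make that argument, or your own, actually close.)
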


\begin{proof}
  Suppose the set would be linearly dependent. This would yield a relation
  \[
  \sum_{S\in \cS} \lambda_S w_{S} \ = \ \lambda_0 w_0 + \sum_{S \in \cS(w)\setminus \cS} \lambda_S w_{S}
  \]
  with coefficients $\lambda_0,\lambda_S\geq 0$ for some $\cS\subset\cS(w)$. However, this
  contradicts the uniqueness part of Theorem~\ref{thm:splitdecomposition} for the weight
  function $w':= \sum_{S\in\cS} \lambda_S w_{S}$.
  
  The cardinality constraints now follow from the fact that the weight functions live in
  $\RR^n \Quotient S[0]\cong \RR^{n-d-1}$.
\end{proof}

The next lemma is a specialization of Corollary~\ref{cor:coherent} to the case of splits and
their weight functions.

\begin{lemma}\label{lem:weaklycompatible}
  Let $\cS$ be a set of splits for $P$.  Then the following statements are equivalent.
  \begin{enumerate}
  \item The corresponding decomposition $w:=\sum_{S\in\cS} w_S$ is coherent,
  \item there exists a common refinement of all $S\in\cS$ (induced by $w$),
  \item there is a regular triangulation of $P$ which refines all $S\in \cS$.
  \end{enumerate}
\end{lemma}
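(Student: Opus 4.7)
The plan is to prove (1)$\Leftrightarrow$(2) by iterating Corollary~\ref{cor:coherent}, and (2)$\Leftrightarrow$(3) via the face structure of the secondary fan $\SecondaryFan{P}$.

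For (1)$\Leftrightarrow$(2), induct on $|\cS|$. The base case $|\cS|=1$ is trivial. For the inductive step, write $w = w_{S_1} + w'$ with $w' := \sum_{i \ge 2} w_{S_i}$. The pair version of Corollary~\ref{cor:coherent} identifies coherence of $(w_{S_1}, w')$ with the statement ``$\subdivision{w}{P}$ is the common refinement of $S_1$ and $\subdivision{w'}{P}$,'' while the inductive hypothesis identifies joint coherence of $w_{S_2}, \ldots, w_{S_k}$ with ``$\subdivision{w'}{P}$ is the common refinement of $S_2,\ldots,S_k$.'' Combining these, joint coherence of $w_{S_1}, \ldots, w_{S_k}$ is equivalent to $\subdivision{w}{P}$ being the common refinement of all splits in $\cS$, i.e., to (2).

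For (2)$\Rightarrow$(3), perturb $w$ generically: for a generic weight $u$ and sufficiently small $\epsilon > 0$, the weight $w + \epsilon u$ lies in the relative interior of a full-dimensional cone of $\SecondaryFan{P}$ containing $S[w]$ as a face, so $\subdivision{w+\epsilon u}{P}$ is a regular triangulation refining $\subdivision{w}{P}$ and hence refining every $S \in \cS$. For (3)$\Rightarrow$(2), let $u$ induce the given regular triangulation $T$. Refinement of $S$ by $T$ means that $S[w_S]$ is a face of $C := S[u]$, so each $w_S$ lies in $\bar C$, and therefore so does $w = \sum_{S \in \cS} w_S$. The smallest face of $C$ whose relative interior contains $w$ must then contain each $w_S$, since in a convex cone, a face containing the midpoint $(x_1+x_2)/2$ of two points contains both. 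Hence this face contains each $S[w_S]$, which translates to $\subdivision{w}{P}$ refining each $S$, yielding (2).

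The main technical point is the ``carrier of a sum contains each summand's carrier'' observation used in (3)$\Rightarrow$(2); the other implications reduce to standard facts about the secondary fan together with iteration of the pair-case corollary.
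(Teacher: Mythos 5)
The paper does not actually give a proof of this lemma --- it simply asserts that it is ``a specialization of Corollary~\ref{cor:coherent} to the case of splits and their weight functions,'' tacitly invoking the obvious multi-term generalization of Lemma~\ref{lem:coherent} and Corollary~\ref{cor:coherent} (which go through unchanged since a face of a Minkowski sum of $k$ polyhedra is a sum of $k$ faces).  Your write-up instead tries to reduce the $k$-term claim to the two-term Corollary~\ref{cor:coherent} by induction, and to run (ii)$\Leftrightarrow$(iii) through the secondary fan.  Both of your halves have a real gap.

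In the induction for (i)$\Leftrightarrow$(ii), the problematic direction is (i)$\Rightarrow$(ii).  From $\sum_{S\in\cS}\envelope{w_S}{P}=\envelope{w}{P}$ and the chain $\sum_{i\ge 1}\envelope{w_{S_i}}{P}\subseteq\envelope{w_{S_1}}{P}+\envelope{w'}{P}\subseteq\envelope{w}{P}$ you do obtain coherence of $(w_{S_1},w')$, but you also need joint coherence of $w_{S_2},\dots,w_{S_k}$, i.e.\ $\sum_{i\ge 2}\envelope{w_{S_i}}{P}=\envelope{w'}{P}$.  This amounts to Minkowski cancellation of $\envelope{w_{S_1}}{P}$ from $\envelope{w_{S_1}}{P}+\sum_{i\ge2}\envelope{w_{S_i}}{P}=\envelope{w_{S_1}}{P}+\envelope{w'}{P}$; such cancellation is \emph{not} automatic for unbounded polyhedra (and your text does not address it).  It does hold here because all envelopes are pointed with the same recession cone, so one can argue via support functions --- but that justification is missing, and it is more work than simply rerunning the proof of Lemma~\ref{lem:coherent} with $k$ summands, which is what the paper intends.

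In (iii)$\Rightarrow$(ii), your face argument in the cone $\overline{S[u]}$ is correct and shows that every $w_S$ lies in the minimal face of the secondary fan whose relative interior contains $w$, hence that $\subdivision{w}{P}$ \emph{refines} every $S\in\cS$.  But condition (ii) asserts that $\subdivision{w}{P}$ \emph{is} the common refinement, i.e.\ the coarsest subdivision refining all the $S$.  From ``$\subdivision{w}{P}$ refines each $S$'' alone you cannot stop; you still need the converse inclusion, namely that $\subdivision{w}{P}$ coarsens the common refinement.  For the specific weight $w=\sum_{S}w_S$ this does hold --- on any cell $G$ of the common refinement the convex function $\sum_S\bar w_S$ is affine, agrees with $\bar w$ on $\Vertices P$ (each $\bar w_S=w_S$ on vertices, since a split uses all vertices), and one checks $\bar w=\sum_S\bar w_S$ on $P$ --- but none of this is in your proof, which in effect only establishes one of the two inclusions needed for equality.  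In short, the fan argument is a nice alternative picture, but both halves are incomplete as written; the shortest correct route remains to state and prove the $k$-term version of Corollary~\ref{cor:coherent} directly and then observe that being the common refinement of the splits in $\cS$ is the same as admitting a refining regular triangulation.
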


Instead of ``set of splits'' we equivalently use the term \emph{split system}. A split
system is called \emph{weakly compatible} if one of the properties of Lemma
\ref{lem:weaklycompatible} is satisfied. Moreover, two splits $S_1$ and $S_2$ such that
$H_{S_1}\cap H_{S_2}$ does not meet $P$ in its interior are called \emph{compatible}. This
notion generalizes to arbitrary split systems in different ways: A set $\cS$ of splits is
called compatible if any two of its splits are compatible. It is \emph{incompatible} if it
is not compatible, and it is \emph{totally incompatible} if any two of its splits are
incompatible.  It is clear that total incompatibility implies incompatibility, and that
compatibility implies weak compatibility (but the converse does not hold, see Example~\ref{ex:3cube:2}).

For an arbitrary split system $\cS$ we define its weight function as
\[
w_\cS \ := \ \sum_{S\in\cS}w_S \, .
\]
If $\cS$ is weakly compatible then $\subdivision{\cS}{P}:=\subdivision{w_\cS}{P}$ is the
coarsest subdivision refining all splits in $\cS$. We further abbreviate
$\envelope{\cS}{P}:=\envelope{w_\cS}{P}$ and $\tightspan{\cS}{P}:=\tightspan{w_\cS}{P}$.

\begin{remark}
  The split decomposition \eqref{eq:splitdecomposition} of a weight function $w$ of the
  $d$-polytope $P$ can actually be computed using our formula \eqref{eq:coherency-index}.
  Provided we already know the, say, $t$ vertices of the tight span of $w$ and the, say,
  $s$ splits of $P$, this takes $O(s \, t \, d \, n)$ arithmetic operations over the reals
  (or the rationals), where $n=\#\Vertices{P}$.
\end{remark}

\section{Split Complexes and Split Subdivisions}

Let $P$ be a fixed $d$-polytope, and let $\cS(P)$ be the set of all splits of $P$.  The
notions of compatibility and weak compatibility of splits give rise to two abstract
simplicial complexes with vertex set $\cS(P)$.  We denote them by $\SplitComplex{P}$ and
$\WeakSplitComplex{P}$, respectively.  Since compatibility implies weak compatibility
$\SplitComplex{P}$ is a subcomplex of $\WeakSplitComplex{P}$.  Moreover, if $\cS\subseteq
\cS(P)$ is a split system such that any two splits in $\cS$ are compatible then the whole
split system $\cS$ is compatible.  This can also be phrased in graph theory language: The
compatibility relation among the splits defines an undirected graph, whose cliques
correspond to the faces of $\SplitComplex{P}$.  In particular, we have the following:

\begin{proposition}
  The split complex $\SplitComplex{P}$ is a flag simplicial complex.
\end{proposition}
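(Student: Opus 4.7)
The plan is essentially an unfolding of definitions, since the compatibility relation for split systems was set up pairwise. Recall that a simplicial complex $\Delta$ is \emph{flag} (or ``clique'') if every set of vertices that is pairwise spanning an edge is itself a face of $\Delta$. Equivalently, $\Delta$ is determined by its $1$-skeleton via the clique operation.

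First I would recall the two definitions at play. By construction $\SplitComplex{P}$ is the abstract simplicial complex on the vertex set $\cS(P)$ whose faces are the compatible split systems. Moreover, in Section~3 a split system $\cS$ was declared compatible precisely when any two of its splits are compatible, i.e., when for all $S_1, S_2 \in \cS$ the intersection $H_{S_1} \cap H_{S_2}$ does not meet $P$ in its relative interior. This is a pairwise condition.

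With this in hand the proof is a one-line verification. Let $\cS \subseteq \cS(P)$ be a set of splits such that every two-element subset $\{S_1, S_2\} \subseteq \cS$ is a face of $\SplitComplex{P}$; by the definition of $\SplitComplex{P}$ this means each pair $\{S_1, S_2\}$ is compatible. But then, by the definition of compatibility for an arbitrary split system, $\cS$ itself is compatible, and hence $\cS$ is a face of $\SplitComplex{P}$. This establishes the flag property.

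There is no real obstacle here; the content of the proposition lies in the choice of definition (namely, that one extends compatibility from pairs to systems by the pairwise condition rather than, say, via intersections of three or more hyperplanes simultaneously). It is worth contrasting this with $\WeakSplitComplex{P}$, for which weak compatibility is \emph{not} a pairwise property, so the analogous statement fails in general (as Example~\ref{ex:3cube:2} referenced in the excerpt will presumably show).
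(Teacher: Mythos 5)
Your proof is correct and matches the paper's reasoning: both observe that $\SplitComplex{P}$ is by definition the clique complex of the pairwise compatibility graph on $\cS(P)$, so the flag property is immediate from the definitions. The closing remark contrasting this with $\WeakSplitComplex{P}$ is accurate and a nice addition, though not part of the paper's (implicit) proof.
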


Note that we did not assume that $P$ admits any split.  If $P$ is unsplittable then the
(weak) split complex of $P$ is the void complex $\emptyset$.

Theorem \ref{thm:splitdecomposition} tells us that the fan spanned by the rays that induce
splits is a simplicial fan contained in (the support of) $\SecondaryFan{P}$.  This fan was
called the \emph{split fan} of $P$ by Koichi \cite{Koichi06}.  Denoting by
$\SecondaryFanOne{P}$ the (spherical) polytopal complex which arises from
$\SecondaryFan{P}$ by intersecting with the unit sphere, this leads to the following
observation:

\begin{corollary}\label{cor:split-secondary-complex}
  The simplicial complex $\SplitComplex{P}$ is a subcomplex of the polytopal complex
  $\SecondaryFanOne{P}$.
\end{corollary}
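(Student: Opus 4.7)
By Lemma~\ref{lem:regular} and Proposition~\ref{prop:primeweights}, each split $S$ corresponds to an extreme ray $\RR_{\ge 0}w_S$ of $\SecondaryFan{P}$, so the vertices of $\SplitComplex{P}$ embed as vertices of $\SecondaryFanOne{P}$. To establish the subcomplex relation, it suffices to show that for every compatible split system $\cS=\{S_1,\dots,S_k\}$ the simplicial cone $\pos\{w_{S_1},\dots,w_{S_k}\}$ agrees, modulo the lineality space $S[0]$, with the closed secondary cone $\overline{S[w_\cS]}$, which is itself a cone of $\SecondaryFan{P}$.

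The inclusion $\pos\{w_{S_1},\dots,w_{S_k}\}\subseteq\overline{S[w_\cS]}$ is immediate: compatibility implies weak compatibility, so by Lemma~\ref{lem:weaklycompatible} the subdivision $\subdivision{\cS}{P}$ is the common refinement of all $S_i$. In particular $\subdivision{\cS}{P}$ refines each $S_i$, placing every $w_{S_i}$ in $\overline{S[w_\cS]}$, and the inclusion follows by convexity of the cone.

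For the reverse inclusion, let $w\in\overline{S[w_\cS]}$, so $\subdivision{\cS}{P}$ refines $\subdivision{w}{P}$, and apply the Split Decomposition Theorem: $w=w_0+\sum_{S'}\alpha^w_{w_{S'}}w_{S'}$, a coherent sum with $w_0$ split prime. Coherence together with Corollary~\ref{cor:coherent} gives that $\subdivision{w}{P}$ refines each occurring split $S'$ as well as $\subdivision{w_0}{P}$; composed with the assumption on $w$, this means $\subdivision{\cS}{P}$ refines every such $S'$ and also refines $\subdivision{w_0}{P}$. The key geometric fact is now that, under compatibility, the interior codimension-$1$ faces of $\subdivision{\cS}{P}$ are precisely the walls $H_{S_i}\cap P$ for $S_i\in\cS$: since no two hyperplanes $H_{S_i},H_{S_j}$ meet inside $P$, the common refinement creates no new codimension-$1$ hyperplane pieces. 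Consequently the only splits of $P$ refined by $\subdivision{\cS}{P}$ are the $S_i\in\cS$ themselves, and the only coarsenings of $\subdivision{\cS}{P}$ are the subdivisions $\subdivision{\cS'}{P}$ for $\cS'\subseteq\cS$, obtained by deleting a subset of the walls. Hence every $S'$ in the decomposition lies in $\cS$, and $\subdivision{w_0}{P}=\subdivision{\cS'}{P}$ for some $\cS'\subseteq\cS$; split primality of $w_0$ forces $\cS'=\emptyset$, so $\subdivision{w_0}{P}$ is trivial and $w_0\in S[0]$.

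Combining both inclusions, $\pos\{w_S:S\in\cS\}=\overline{S[w_\cS]}$ modulo $S[0]$, which is a cone of $\SecondaryFan{P}$. Since subsets of a compatible system are again compatible, every face of the simplex spanned by $\{w_S:S\in\cS\}$ is likewise a cell of $\SecondaryFanOne{P}$, and the subcomplex assertion follows. The main obstacle throughout is the codimension-$1$ analysis of $\subdivision{\cS}{P}$ — it is precisely compatibility (as opposed to merely weak compatibility) of $\cS$ that ensures no two split hyperplanes cross inside $P$ and hence that $\subdivision{\cS}{P}$ carries no ``unexpected'' codimension-$1$ walls that would enlarge $\overline{S[w_\cS]}$ beyond $\pos\{w_S:S\in\cS\}$.
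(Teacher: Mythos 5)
Your proof is correct and arrives at the same cone-equality conclusion as the paper, but the route and the tools invoked differ somewhat. The paper's argument is a single stroke: by Proposition~\ref{prop:tree} the tight span $\tightspan{\cS}{P}$ of a compatible split system is a tree, hence the interior faces of $\subdivision{\cS}{P}$ all have codimension at most one; consequently any coarsest subdivision coarsened by $\subdivision{\cS}{P}$ also has only codimension-one interior faces, i.e., is itself a split whose hyperplane is some $H_{S_i}$, so the vertices of the cell of $\SecondaryFanOne{P}$ generated by $\cS$ are precisely the splits in $\cS$. You reach the same endpoint by instead running each $w\in\overline{S[w_\cS]}$ through the Split Decomposition Theorem, showing that every split occurring in that decomposition lies in $\cS$ and that the split prime remainder is trivial modulo $S[0]$, thereby establishing the cone identity $\pos\{w_S:S\in\cS\}=\overline{S[w_\cS]}$ explicitly. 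The load-bearing geometric fact is identical in both proofs---compatibility forbids interior intersections of split hyperplanes, so $\subdivision{\cS}{P}$ has no interior faces of codimension two or more---you invoke it in your wall and coarsening analysis, the paper via the tree property. The paper's version is more economical because it bypasses the cone computation and reads off the cell's vertex set directly from Proposition~\ref{prop:tree}; yours is longer but makes the fan-theoretic content more explicit. One small point of presentation: your coarsening claim (``the only coarsenings of $\subdivision{\cS}{P}$ are $\subdivision{\cS'}{P}$ for $\cS'\subseteq\cS$'') leans on the absence of interior faces of codimension $\ge 2$, which you state only implicitly; phrasing that step via Proposition~\ref{prop:tree} would tighten it and make the parallel to the paper's proof transparent.
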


\begin{proof}
  The tight span of a compatible system $\cS$ of splits of $P$ is a tree by
  Proposition~\ref{prop:tree}.  This implies that the cell $C$ in $\SecondaryFanOne{P}$
  generated by $\cS$ does not contain vertices whose tight span is of dimension greater
  than one.  Thus the vertices of $C$ are precisely the splits in $\cS$.
\end{proof}

\begin{remark}
  The weak split complex of $P$ is usually not a subcomplex of $\SecondaryFanOne{P}$; see
  Example~\ref{ex:3cube:2}.  However, one can show that $\WeakSplitComplex{P}$ is homotopy
  equivalent to a subcomplex of $\SecondaryFanOne{P}$.
\end{remark}

From Corollary \ref{cor:split-linear-independence} we can trivially derive an upper bound on
the dimensions of the split complex and the weak split complex. This bound is sharp for
both types of complexes as we will see in Example~\ref{ex:n-gon:2} below.

\begin{proposition}\label{prop:dim-split-complex}
  The dimensions of $\SplitComplex{P}$ and $\WeakSplitComplex{P}$ are bounded from above by $n-d-2$.
\end{proposition}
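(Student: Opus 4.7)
The plan is to reduce the bound to Corollary~\ref{cor:split-linear-independence}, which already implies a cardinality bound on the split systems appearing in any coherent decomposition. The two claims can be unified by noting that any face of $\SplitComplex{P}$ is also a face of $\WeakSplitComplex{P}$ (since compatibility implies weak compatibility), so it suffices to bound the dimension of the weak split complex.

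First, I would take an arbitrary face $\cS$ of $\WeakSplitComplex{P}$, i.e.\ a weakly compatible split system, and form its weight $w := w_\cS = \sum_{S\in\cS} w_S$. By Lemma~\ref{lem:weaklycompatible}, this sum is a coherent decomposition of $w$ with trivial split prime remainder $w_0 = 0$. The next step is to invoke the uniqueness part of the Split Decomposition Theorem (Theorem~\ref{thm:splitdecomposition}): the decomposition $w = 0 + \sum_{S\in\cS}w_S$ must therefore coincide with the split decomposition of $w$, so every $S \in \cS$ belongs to $\cS(w)$, i.e.\ $\cS \subseteq \cS(w)$.

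Now Corollary~\ref{cor:split-linear-independence} says that $\cS(w) \cup \{w_0\}$ is linearly independent in the quotient $\RR^n / S[0]$, and in particular $\#\cS(w) \le n-d-1$. Combining with the inclusion $\cS \subseteq \cS(w)$ yields $\#\cS \le n-d-1$, so the dimension of this face is at most $(n-d-1) - 1 = n-d-2$. This bound is uniform over all faces of $\WeakSplitComplex{P}$, and hence also over all faces of the subcomplex $\SplitComplex{P}$.

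The only subtle point is ensuring the reduction to Corollary~\ref{cor:split-linear-independence} is legitimate, i.e.\ that a weakly compatible split system really does realize itself as the support of the canonical split decomposition of its own weight; this is immediate from uniqueness once one observes that $w_0 = 0$ in the candidate decomposition $w_\cS = \sum_{S\in\cS} w_S$. No further geometric input is needed, and the argument makes no distinction between compatible and weakly compatible systems, which is exactly why the same bound applies to both complexes.
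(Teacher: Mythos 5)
Your proof is correct and follows exactly the route the paper intends: the text preceding the proposition says the bound follows "trivially" from Corollary~\ref{cor:split-linear-independence}, and your argument — showing via Lemma~\ref{lem:weaklycompatible} and the uniqueness in Theorem~\ref{thm:splitdecomposition} that a weakly compatible system $\cS$ satisfies $\cS=\cS(w_\cS)$, then invoking the cardinality bound $\#\cS(w)\le n-d-1$ — is precisely the implicit derivation the authors had in mind. The one sentence worth tightening is that $w_0=0$ being split prime is a (trivial) fact about the zero weight function, not strictly a consequence of Lemma~\ref{lem:weaklycompatible} as your phrasing momentarily suggests, but you acknowledge and resolve this in your final paragraph.
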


A regular subdivision (triangulation) $\Delta$ of $P$ is called a \emph{split subdivision
  (triangulation)} if it is the common refinement of a set $\cS$ of splits of $P$.
Necessarily, the split system $\cS$ is weakly compatible, and $\cS$ is a face of
$\WeakSplitComplex{P}$.  Conversely, all faces of $\WeakSplitComplex{P}$ arise in this
way.

\begin{corollary}\label{cor:weakly-compatible-triangulation}
  If $\cS$ is a facet of $\WeakSplitComplex{P}$ with $\#\cS=n-d-1$ then the split subdivision
  $\subdivision{\cS}{P}$ is a split triangulation.
\end{corollary}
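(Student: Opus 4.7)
The plan is to show that the secondary cone $S[w_\cS]$ is full-dimensional in the quotient space $\RR^n \Quotient S[0] \cong \RR^{n-d-1}$, which forces $\subdivision{\cS}{P}$ to be a regular triangulation (since full-dimensional cones in $\SecondaryFan{P}$ correspond exactly to regular triangulations), and hence by definition a split triangulation.

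First I would set $w := w_\cS = \sum_{S \in \cS} w_S$. Because $\cS$ is a facet of $\WeakSplitComplex{P}$, it is weakly compatible, so by Lemma \ref{lem:weaklycompatible} the decomposition above is coherent and $\subdivision{\cS}{P} = \subdivision{w}{P}$ is the common refinement of the splits in $\cS$. Each $S \in \cS$ has coefficient $1 > 0$ in the split decomposition of $w$, so $\cS \subseteq \cS(w)$; combined with the bound $\#\cS(w) \leq n-d-1 = \#\cS$ from Corollary \ref{cor:split-linear-independence}, this forces $\cS = \cS(w)$, the split prime remainder $w_0$ vanishes, and $\{w_S : S \in \cS\}$ is linearly independent in $\RR^n \Quotient S[0]$. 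Since this quotient has dimension exactly $n-d-1$, the family $\{w_S : S \in \cS\}$ is in fact a basis.

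Next I would verify that every strictly positive combination $w' = \sum_{S \in \cS} \lambda_S w_S$ with $\lambda_S > 0$ satisfies $\subdivision{w'}{P} = \subdivision{\cS}{P}$. Since $\lambda_S w_S$ and $w_S$ induce the same split $S$ for $\lambda_S > 0$, the underlying split system is unchanged, and an inductive application of Corollary \ref{cor:coherent} shows that $w'$ is a coherent decomposition whose induced subdivision is again the common refinement of the splits in $\cS$. Consequently the relative interior of the simplicial cone $\pos\{w_S : S \in \cS\}$ is entirely contained in the secondary cone $S[w]$; since the $w_S$'s form a basis of the quotient, that relative interior is a Euclidean-open subset of $\RR^n \Quotient S[0]$, proving that $S[w]$ is a full-dimensional cone of $\SecondaryFan{P}$. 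Therefore $\subdivision{\cS}{P}$ is a regular triangulation, and it is a split triangulation by construction.

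The main technical obstacle is the invariance of the induced subdivision under positive rescaling of the split weights. Geometrically this is intuitive (each split depends only on its hyperplane, not on the chosen normalization), but to deduce it rigorously within the framework developed here one must chain applications of Corollary \ref{cor:coherent} across the sum, checking at each stage that the partial sum still induces the common refinement of the splits processed so far; once this is in hand the rest of the argument is a direct dimension count.
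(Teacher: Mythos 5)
The approach is essentially the paper's --- identify the split decomposition of $w := w_\cS$, use Corollary~\ref{cor:split-linear-independence} to get linear independence of $W := \smallSetOf{w_S}{S\in\cS}$, observe that $W$ is then a basis of $\RR^n\Quotient S[0]\cong\RR^{n-d-1}$, and conclude that the corresponding secondary cone is full-dimensional and hence $\subdivision{\cS}{P}$ is a regular triangulation. Your setup, the identification $\cS(w)=\cS$ with $w_0=0$, and the dimension count all line up with the paper.

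The issue is the middle step, where you try to show that the entire open cone of positive combinations $\sum_{S\in\cS}\lambda_S w_S$ with all $\lambda_S>0$ lies inside $S[w]$. This requires that $\subdivision{\sum\lambda_S w_S}{P}$ be independent of the choice of positive coefficients $\lambda_S$, which you correctly flag as the ``technical obstacle.'' But the remedy you propose --- chaining Corollary~\ref{cor:coherent} --- is circular: that corollary says a decomposition is coherent \emph{if and only if} its induced subdivision equals the common refinement, so it cannot be used to establish the common-refinement property of the rescaled sum unless you already know the rescaled sum is coherent (and vice versa). A correct fix does exist: each $w_S$ is the restriction to $\Vertices{P}$ of the convex function $x\mapsto\max(0,ax)$, so any nonnegative combination is again convex, its lower envelope is the sum of the individual piecewise-linear envelopes, and its linearity regions are the common refinement of the split cells regardless of the scaling. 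That observation, however, is not the one you invoke.

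The paper's proof sidesteps rescaling entirely. Once one knows $\subdivision{w}{P}$ is the common refinement of the splits in $\cS$, every $S\in\cS$ is a coarsening of $\subdivision{w}{P}$; by the structure of the secondary fan, each ray $\pos(w_S)$ is therefore a face of the \emph{closed} secondary cone of $w$. Hence $\pos W$ is contained in that closed cone, and linear independence of $W$ forces it to be full-dimensional. The open interior of $\pos W$ never needs to be discussed.
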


\begin{proof}
  Corollary \ref{cor:split-linear-independence} implies that $W:=\smallSetOf{w_S}{S\in\cS}$ is
  linearly independent and hence a basis of $\RR^n \Quotient S[0]\cong \RR^{n-d-1}$. So the
  cone spanned by $W$ is full-dimensional and hence corresponds to a vertex of the secondary
  polytope.
\end{proof}

The following is a characterization of the faces of $\SplitComplex{P}$, and it says that
split complexes are always ``spaces of trees''.

\begin{proposition}[Hirai \cite{Hirai06}, Proposition 2.9]\label{prop:tree}
  Let $\cS$ be a split system on $P$.  Then the following statements are equivalent.
  \begin{enumerate}
  \item\label{it:tree:compatible} $\cS$ is compatible,
  \item\label{it:tree:1D} $\tightspan{\cS}{P}$ is $1$-dimensional, and
  \item\label{it:tree:tree} $\tightspan{\cS}{P}$ is a tree.
  \end{enumerate}
\end{proposition}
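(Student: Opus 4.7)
My plan is to translate all three conditions into statements about the interior faces of the common refinement $\subdivision{\cS}{P}$ via the anti-isomorphism of Proposition~\ref{prop:duality}. Under that duality a $k$-face of $\tightspan{\cS}{P}$ corresponds to an interior face of $\subdivision{\cS}{P}$ of codimension $k+1$; in particular, $\tightspan{\cS}{P}$ has dimension at most $1$ if and only if every face of $\subdivision{\cS}{P}$ that meets $\Interior(P)$ has codimension at most $1$. Since, by Corollary~\ref{cor:coherent}, $\subdivision{\cS}{P}$ is the common refinement of the splits in $\cS$, each of its cells has the form $\bigcap_{S\in\cS} C_S$ with $C_S\in\{S_+,S_-,H_S\cap P\}$, and the codimension of such a cell equals the affine codimension of the set of hyperplanes $H_S$ on which it lies.

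For \ref{it:tree:compatible} $\Rightarrow$ \ref{it:tree:1D}, compatibility means $H_{S_1}\cap H_{S_2}\cap\Interior(P)=\emptyset$ for all $S_1,S_2\in\cS$; hence any cell of $\subdivision{\cS}{P}$ lying in $\Interior(P)$ is contained in at most one split hyperplane and so has codimension at most $1$. Each individual split hyperplane does contribute a codimension-$1$ interior face (the common facet of $S_+$ and $S_-$), so $\tightspan{\cS}{P}$ is exactly $1$-dimensional whenever $\cS\ne\emptyset$.

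The step \ref{it:tree:1D} $\Rightarrow$ \ref{it:tree:tree} is immediate from the contractibility of tight spans recalled in Section~\ref{sec:coherency} (citing Hirai): a contractible $1$-dimensional polytopal complex is a tree. Finally, for \ref{it:tree:tree} $\Rightarrow$ \ref{it:tree:compatible}, a tree is in particular $1$-dimensional; if $\cS$ were not compatible, one could pick $p\in\Interior(P)\cap H_{S_1}\cap H_{S_2}$ for two distinct splits $S_1,S_2\in\cS$. The cell of $\subdivision{\cS}{P}$ containing $p$ would then sit in $\Interior(P)$ and lie on two distinct split hyperplanes, hence have codimension at least $2$, giving a face of dimension at least $2$ in $\tightspan{\cS}{P}$ by the duality. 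This contradicts one-dimensionality and proves compatibility.

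The only step that needs genuine care is \ref{it:tree:compatible} $\Rightarrow$ \ref{it:tree:1D}: one must be sure that the duality bound is tight, i.e.\ that a cell of $\subdivision{\cS}{P}$ lying on $\geq 2$ of the $H_S$ really has codimension $\geq 2$ and lies in $\Interior(P)$ exactly when the corresponding pair of hyperplanes meets $\Interior(P)$. Both facts are structural consequences of the common-refinement description of $\subdivision{\cS}{P}$ above, so I expect no deep obstacle beyond unpacking the definitions and invoking the already-established duality and coherency results.
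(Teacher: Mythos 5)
Your argument follows the paper's own proof essentially step for step: pass to interior cells of $\subdivision{\cS}{P}$ via the anti-isomorphism of Proposition~\ref{prop:duality}, note that compatibility forbids interior cells of codimension two, and use contractibility (cited from Hirai) to get from $1$-dimensionality to being a tree.  Two small points are worth flagging.  First, under Proposition~\ref{prop:duality} a $k$-face of $\tightspan{\cS}{P}$ corresponds to an interior cell of $\subdivision{\cS}{P}$ of codimension $k$, not $k+1$ --- vertices of the tight span pair with full-dimensional cells of the subdivision, as already used in the proof of Corollary~\ref{cor:coherent}; this slip is harmless here only because the sentence you actually go on to use (``dimension at most $1$ iff every interior cell has codimension at most $1$'') is stated correctly.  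Second, both your invocation of Corollary~\ref{cor:coherent} to identify $\subdivision{\cS}{P}$ with the common refinement of the splits, and the claim in \ref{it:tree:tree}~$\Rightarrow$~\ref{it:tree:compatible} that the carrier of the point $p$ lies in $H_{S_1}\cap H_{S_2}$, presuppose that $\subdivision{\cS}{P}$ is refined by each $H_S$, i.e.\ that $\cS$ is weakly compatible.  That is automatic in direction \ref{it:tree:compatible}~$\Rightarrow$~\ref{it:tree:1D} (compatible implies weakly compatible via Lemma~\ref{lem:weaklycompatible}) but is not given in the converse direction; the paper's own proof addresses this by first observing that the splits dual to the edges of the tree form a weakly compatible system (since $\subdivision{\cS}{P}$ refines them) before running the codimension-two argument.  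Supplying that observation --- or showing directly that if $\tightspan{\cS}{P}$ is one-dimensional then $w_\cS$ coincides with the sum of the split weights appearing in its split decomposition --- would close the gap.
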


\begin{proof}
  Assume that $\subdivision{\S}{P}$ is induced by the compatible split system $\cS\ne\emptyset$.  By
  definition, for any two distinct splits $S_1,S_2\in\cS$ the hyperplanes $H_{S_1}$ and $H_{S_2}$
  do not meet in the interior of~$P$.  This implies that there are no
  interior faces in $\subdivision{\S}{P}$ of codimension greater than $1$.  By 
  Proposition~\ref{prop:duality},
  this says that $\dim\tightspan{\S}{P}\le 1$. Since $\cS\ne\emptyset$ we have that
  $\dim\tightspan{\S}{P}=1$.  Thus \eqref{it:tree:compatible} implies \eqref{it:tree:1D}.

  The statement \eqref{it:tree:tree} follows from \eqref{it:tree:1D} as the tight span is contractible.

  Suppose that $\tightspan{\S}{P}$ is a tree.  Then each edge is dual to a split
  hyperplane.  The system $\cS$ of all these splits is clearly weakly compatible since it
  is refined by $\subdivision{\S}{P}$.  Assume that there are splits $S_1,S_2\in\cS$ such
  that the corresponding split hyperplanes $H_{S_1}$ and $H_{S_2}$ meet in the interior
  of~$P$.  Then $H_{S_1}\cap H_{S_2}$ is an interior face in $\subdivision{\S}{P}$ of
  codimension~$2$, contradicting our assumption that $\tightspan{\S}{P}$ is a tree.  This
  proves \eqref{it:tree:compatible}, and hence the claim follows.
\end{proof}

\begin{remark}
  A $d$-dimensional polytope is called \emph{stacked} if it has a triangulation in which
  there are no interior faces of dimension less than $d-1$. So it follows from Proposition~\ref{prop:tree}
  that a polytope is stacked if and only if there exists a split triangulation induced by
  a compatible system of splits.
\end{remark}
  
\begin{example}\label{ex:n-gon:2}
  Let $P$ be a an $n$-gon for $n\ge 4$.  As already pointed out in
  Example~\ref{ex:n-gon:1}, each pair of non-neighboring vertices defines a split of $P$.
  Two such splits are compatible if and only if they are weakly compatible.

  The secondary polytope of $P$ is the associahedron $\Assoc_{n-3}$, and the split complex
  of $P$ is isomorphic to the boundary complex of its dual.  In particular,
  $\SplitComplex{P}=\WeakSplitComplex{P}$ is a pure and shellable simplicial complex of
  dimension $n-4$, which is homeomorphic to $\Sph^{n-4}$. This shows that the bound in
  Proposition~\ref{prop:dim-split-complex} is sharp.  From Catalan combinatorics it is
  known that the (split) triangulations of $P$ correspond to the binary trees on $n-2$
  nodes; see~\cite[Section~1.1]{Triangulations}.
\end{example}

\begin{example}\label{ex:cross_d}
  The splits of the regular cross polytope $X_d=\conv\{\pm e_1,\pm e_2,\dots,\pm e_d\}$ in
  $\RR^d$ are induced by the $d$ reflection hyperplanes $x_i=0$.  Any $d-1$ of them are
  weakly compatible and define a triangulation of $X_d$ by Corollary
  \ref{cor:weakly-compatible-triangulation}.  (Of course, this can also be seen directly.)
  All triangulations of $X_d$ arise in this way.  This shows that $\WeakSplitComplex{X_d}$
  is isomorphic to the boundary complex of a $(d-1)$-dimensional simplex, which is also
  the secondary polytope and the split polytope of $X_d$.  Any two reflection hyperplanes
  meet in the interior of $X_d$, whence no two splits are compatible.  This says that
  $\SplitComplex{X_d}$ consists of $d$ isolated points.
\end{example}

\begin{example}\label{ex:3cube:2}
  As we already discussed in Example~\ref{ex:3cube:1} the regular $3$-cube $C_3=[-1,1]^3$
  has a total number of $14$ splits. The split complex $\SplitComplex{C}$ is
  $3$-dimensional but not pure; its $f$-vector reads $(14,40,32,2)$.  The two
  $3$-dimensional facets correspond to the two non-unimodular triangulations of $C$
  (arising from splitting every other vertex).  The reduced homology is concentrated in
  dimension two, and we have $H_2(\SplitComplex{C_3};\ZZ)\cong\ZZ^3$.  The graph
  indicating the compatibility relation among the splits is shown in
  Figure~\ref{fig:SplitComplex-3-cube}.

  Figure~\ref{fig:3cube-triag} shows three triangulations of $C_3$.  The left one is
  generated by a totally incompatible system of three splits; that is, it is a facet of
  $\WeakSplitComplex{C_3}$ which is not a face of $\SplitComplex{C_3}$.  The right one is (not
  unimodular and) generated by a compatible split system (of four vertex splits); that is,
  it is a facet of both $\SplitComplex{C_3}$ and $\WeakSplitComplex{C_3}$.  The middle one is
  not generated by splits at all.
  
  The triangulation $\Delta$ on the left uses only three splits.  This examples shows that
  the converse of Corollary~\ref{cor:weakly-compatible-triangulation} is not true, that
  is, a weakly compatible split system that defines a triangulation does not have to be
  maximal with respect to cardinality. Furthermore, the triangulation $\Delta$ can also be
  obtained as the common refinement of two non-split coarsest subdivisions. The cell in
  $\SecondaryFanOne{C_3}$ corresponding to $\Delta$ is a bipyramid over a triangle. The
  vertices of this triangle (which is not a face of $\SecondaryFanOne{C_3}$) correspond to
  the three splits, so the relevant cell in $\WeakSplitComplex{C_3}$ is a triangle, and
  the apices corresponds to the non-split coarsest subdivisions mentioned above.  Since
  the three splits are totally incompatible there does not exist a corresponding face in
  $\SplitComplex{C_3}$, and the intersection with $\SplitComplex{C_3}$ consists of three
  isolated points.
\end{example}

\begin{figure}[thb]
  \includegraphics[width=.35\textwidth]{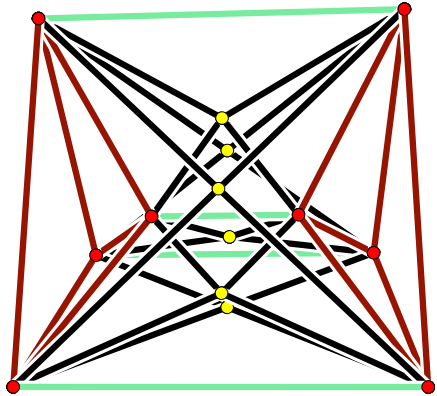}
  \caption{Compatibility graph of the splits of the regular $3$-cube. The four (red) nodes
    to the left and the four (red) nodes to the right correspond to the vertex splits.}
  \label{fig:SplitComplex-3-cube}
\end{figure}

%

A polytopal complex is \emph{zonotopal} if each face is zonotope.  A \emph{zonotope} is
the Minkowski sum of line segments or, equivalently, the affine projection of a regular
cube.  Any graph, that is, a $1$-dimensional polytopal complex, is zonotopal in a trivial
way. So especially tight spans of splits and, by Proposition~\ref{prop:tree}, of
compatible splits systems are zonotopal. In fact, this is even true for arbitrary weakly
compatible splits systems. See also Bolker \cite[Theorem~6.11]{Bolker69} and Hirai
\cite[Corollary 2.8]{Hirai06}.

\begin{theorem}\label{thm:zonotopal}
  Let $\cS$ be a weakly compatible split system on~$P$.  Then the tight span
  $\tightspan{\cS}{P}$ is a (not necessarily pure) zonotopal complex.
\end{theorem}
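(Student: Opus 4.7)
The plan is to use the coherent decomposition of $w_{\cS}$ provided by Lemma~\ref{lem:weaklycompatible} to express the tight span as a Minkowski sum of the individual split tight spans, which are each line segments, and then invoke the standard face structure of Minkowski sums.

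\medskip

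\emph{Step 1: The tight span of a single split is a line segment.} For a split $S$, the subdivision $\subdivision{w_S}{P}$ has exactly two maximal cells $S_+, S_-$ and a single interior face of codimension one, namely $H_S \cap P = S_+\cap S_-$. Under the poset anti-isomorphism of Proposition~\ref{prop:duality}, this interior face corresponds to an edge of $\tightspan{w_S}{P}$, and the two maximal cells correspond to its two endpoints. Hence $\tightspan{w_S}{P}$ is a (closed) line segment.

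\medskip

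\emph{Step 2: Envelope decomposition.} By Lemma~\ref{lem:weaklycompatible}, weak compatibility of $\cS$ is equivalent to the decomposition $w_{\cS} = \sum_{S \in \cS} w_S$ being coherent. Applying the defining equality of coherence inductively to any ordering of the splits in $\cS$ yields
\[
\envelope{\cS}{P} \ = \ \sum_{S \in \cS} \envelope{w_S}{P}.
\]
(The induction step requires only the two-summand case together with the observation that a coherent refinement of a coherent decomposition is again coherent, which follows from Corollary~\ref{cor:coherent}.)

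\medskip

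\emph{Step 3: Faces of a Minkowski sum.} I would now invoke the standard fact that every face of a Minkowski sum $X_1 + \cdots + X_k$ of polyhedra decomposes as a Minkowski sum $F_1 + \cdots + F_k$ of faces of the summands (those selected by any linear functional maximizing on the given face), and that such a Minkowski sum of faces is bounded if and only if each $F_i$ is bounded. Applied to our situation, every face $F$ of the tight span $\tightspan{\cS}{P}$ admits a decomposition
\[
F \ = \ \sum_{S \in \cS} F_S,
\]
where each $F_S$ is a bounded face of $\envelope{w_S}{P}$, i.e., a face of $\tightspan{w_S}{P}$, i.e., either the whole segment or one of its two endpoints.

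\medskip

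\emph{Step 4: Conclusion.} Each $F$ is therefore the Minkowski sum of finitely many line segments (taking the endpoints as degenerate segments), which is the definition of a zonotope. Hence every face of $\tightspan{\cS}{P}$ is a zonotope, so $\tightspan{\cS}{P}$ is a zonotopal polytopal complex. Pure-ness is not claimed, and indeed cannot be expected: different faces of $\tightspan{\cS}{P}$ pick up different subsets of the splits with nondegenerate $F_S$, so the dimensions of maximal faces may vary.

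\medskip

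The main obstacle is Step~3: one must justify that the Minkowski decomposition of faces is compatible with the polytopal complex structure on $\tightspan{\cS}{P}$, in particular that distinct face decompositions correspond to distinct faces and that the incidence relation is preserved. This is a standard consequence of the description of faces of a Minkowski sum via normal cones (the normal fan of the sum refines the normal fan of each summand), which I would invoke as a general fact from polyhedral geometry rather than reprove.
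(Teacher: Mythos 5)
Your proof takes essentially the same route as the paper: both invoke Lemma~\ref{lem:weaklycompatible} to write $\envelope{\cS}{P}=\sum_{S\in\cS}\envelope{w_S}{P}$, push this through the Minkowski-sum face decomposition argument already present in the proof of Lemma~\ref{lem:coherent} to write each face of $\tightspan{\cS}{P}$ as a Minkowski sum of faces of the segments $\tightspan{w_S}{P}$, and conclude. The details you fill in (Step~1 via Proposition~\ref{prop:duality}, Step~3 via normal fans) are correct and consistent with what the paper leaves implicit.
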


\begin{proof}
  Let $F$ be a face of $\tightspan{\cS}{P}$.  Since by Lemma~\ref{lem:weaklycompatible}
  we have that $\envelope{\cS}{P}=\sum_{S\in\cS} \envelope{w_S}{P}$ we get (by the same arguments used in
  the proof of Lemma~\ref{lem:coherent}) that $F=\sum_{S\in\cS} F_S$ for
  faces $F_S$ of $\tightspan{w_S}{P}$. The claim now follows from the fact that
  $\tightspan{w_S}{P}$ is a line segment for all $S\in\cS$.
\end{proof}

A triangulation of a $d$-polytope is \emph{foldable} if its vertices can be colored with
$d$ colors such that each edge of the triangulation receives two distinct colors.  This
is equivalent to requiring that the dual graph of the triangulation is bipartite; see
\cite[Corollary 11]{ProjSimplePolytopes}.  Note that foldable simplicial complexes are
called ``balanced'' in~\cite{ProjSimplePolytopes}.  The three triangulations of the
regular $3$-cube in Figure~\ref{fig:3cube-triag} are foldable.

\begin{figure}[hbt]
  \includegraphics[width=.3\textwidth]{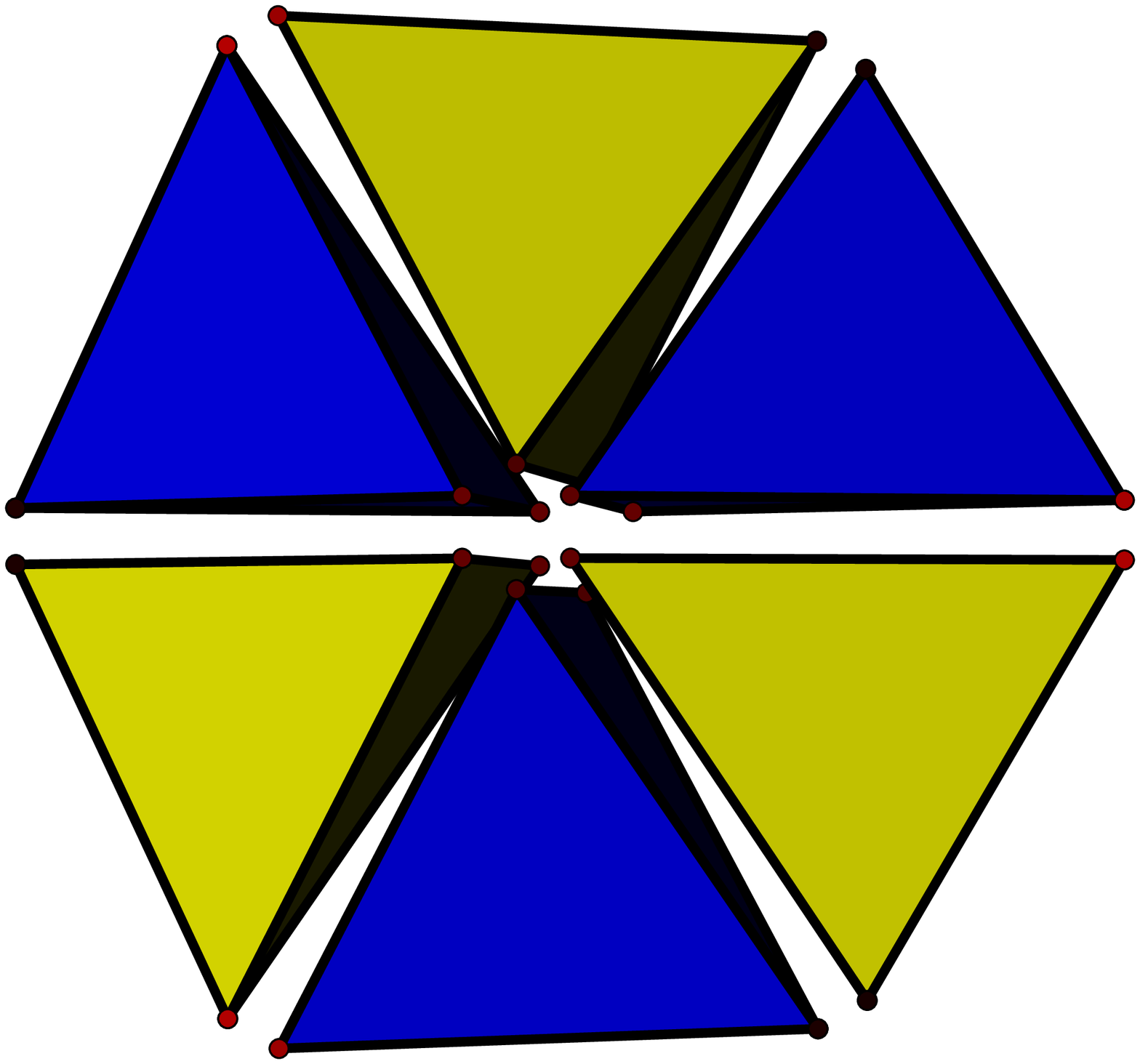}\hfill
  \includegraphics[width=.3\textwidth]{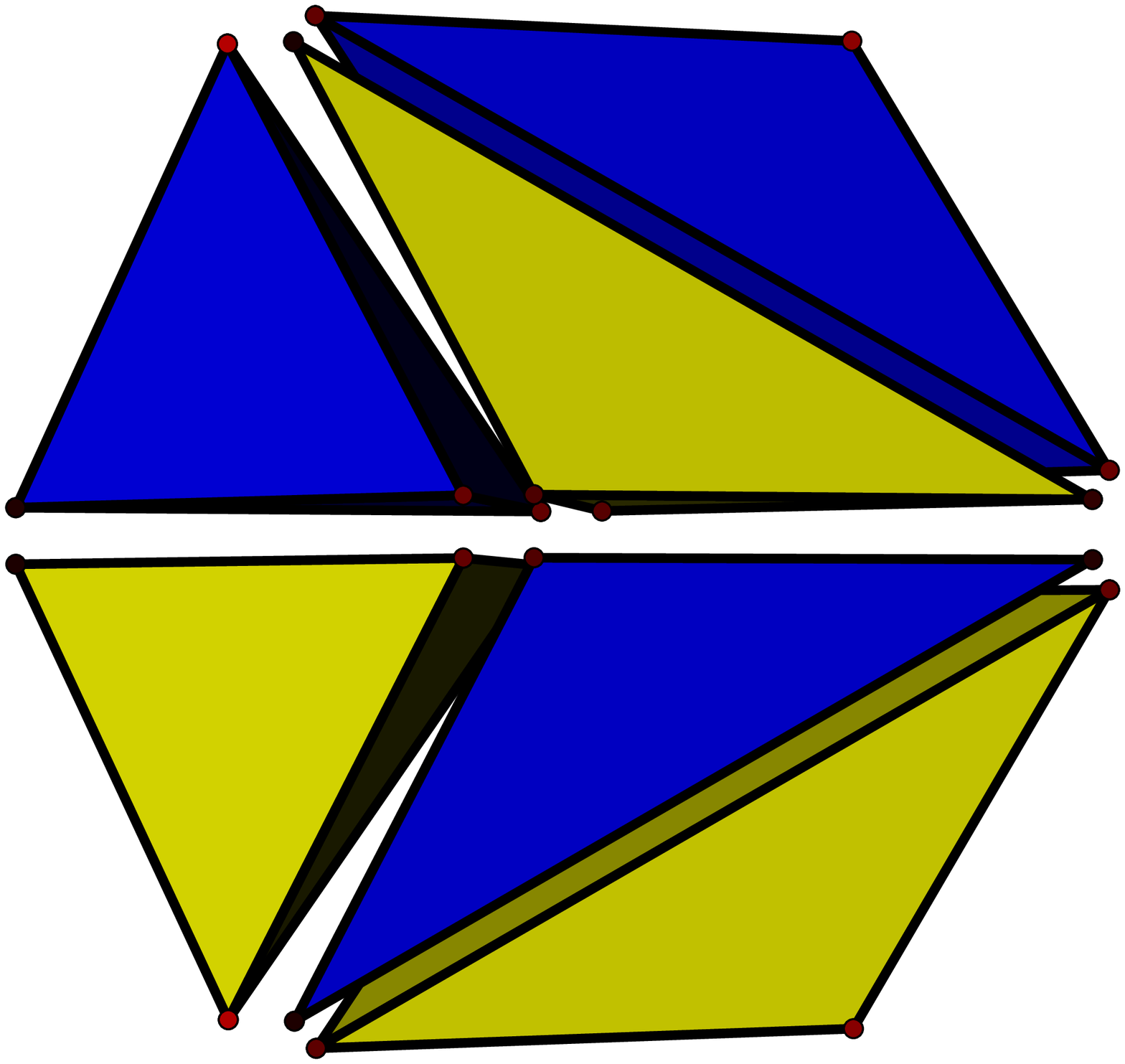}\hfill
  \includegraphics[width=.3\textwidth]{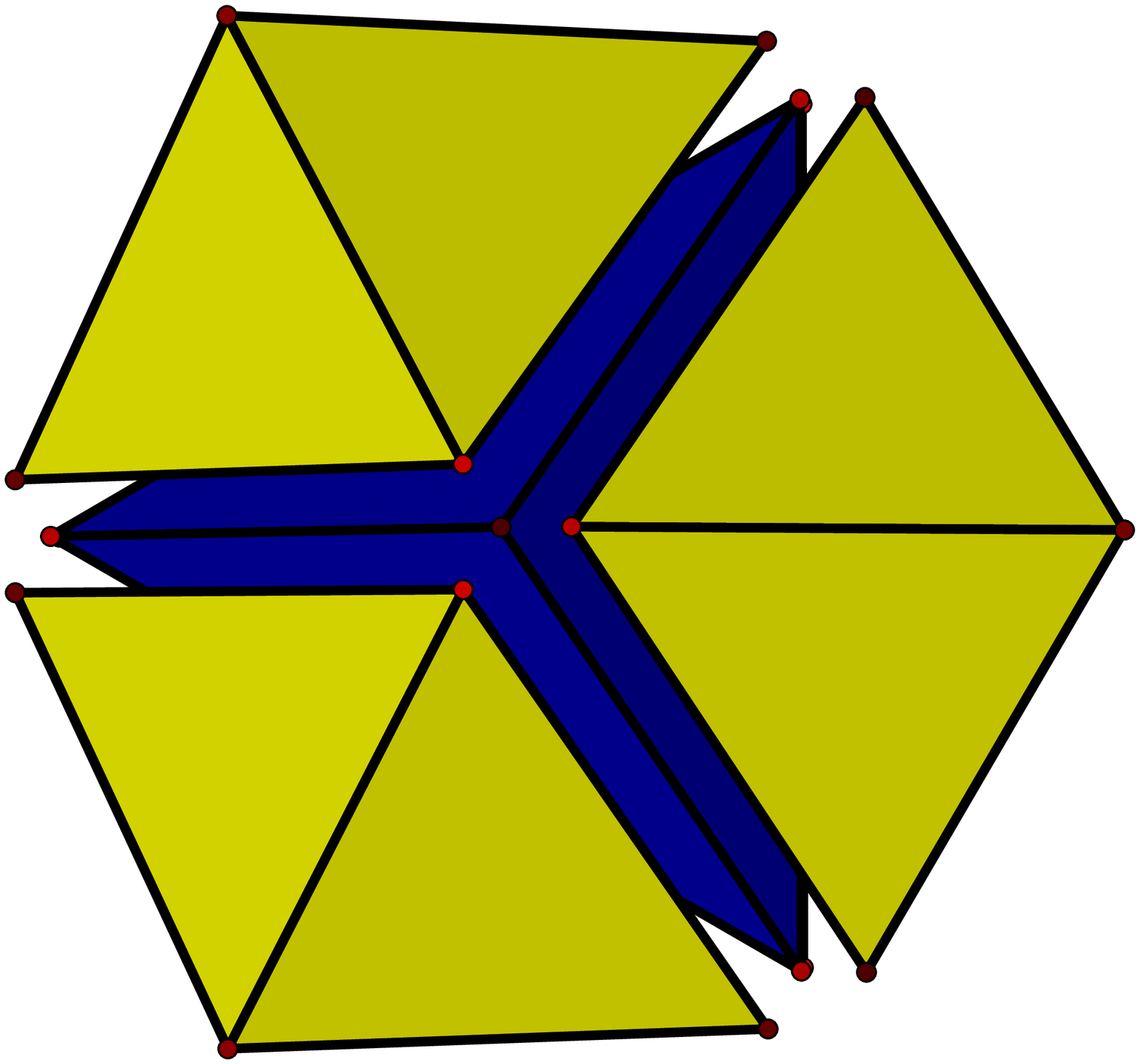}
  \caption{Three foldable triangulations of the regular $3$-cube.}
  \label{fig:3cube-triag}
\end{figure}

\begin{corollary}\label{cor:foldable}
  Each split triangulation is foldable.
\end{corollary}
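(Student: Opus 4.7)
The plan is to show that the dual graph of a split triangulation is bipartite and then invoke \cite[Corollary 11]{ProjSimplePolytopes}, cited just above, to conclude foldability.

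Let $\Delta=\subdivision{\cS}{P}$ be a split triangulation induced by a weakly compatible split system $\cS$. The first step is the geometric observation that \emph{each interior codimension-$1$ face $F$ of $\Delta$ lies on a unique split hyperplane $H_S$ with $S\in\cS$.} For existence: since $\Delta$ refines each $\subdivision{S}{P}$, the face $F$ is either contained in $H_S$ or sits strictly on one side of it, and the two $d$-simplices $\sigma,\tau$ sharing $F$ must be separated by some $H_S$---otherwise they would occupy the same cell of every $\subdivision{S}{P}$ and hence the same cell of the common refinement $\Delta$, contradicting $\sigma\ne\tau$. For uniqueness: splits are determined by their split hyperplanes (as noted after Observation~\ref{obs:facets}), so distinct splits $S_i\ne S_j$ give distinct $(d-1)$-dimensional affine hyperplanes in $\aff P$, whose intersection has dimension at most $d-2$ and cannot contain the $(d-1)$-dimensional simplex $F$.

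Given this, assign to each $d$-simplex $\sigma$ of $\Delta$ the $\ZZ/2$-label
\[
  c(\sigma) \ := \ \#\SetOf{S\in\cS}{\sigma\subseteq S_+} \pmod 2 \,.
\]
Because $\sigma$ is top-dimensional and its interior is disjoint from every $H_S$, the simplex $\sigma$ lies in exactly one of the closed halves $S_+,S_-$ for each $S\in\cS$, so $c$ is well-defined. For any pair of adjacent $d$-simplices $\sigma,\tau$ sharing a codim-$1$ face $F$, the first step produces a \emph{unique} $S^*\in\cS$ with $F\subseteq H_{S^*}$; then $\sigma$ and $\tau$ lie on the same side of $H_S$ for every $S\ne S^*$ and on opposite sides of $H_{S^*}$. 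Hence $c(\sigma)\ne c(\tau)$, so $c$ is a proper $2$-coloring of the dual graph of $\Delta$, which is therefore bipartite.

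The main obstacle is the uniqueness half of the first step; it rests on the codimension count together with the fact that a split is determined by its defining hyperplane, so that distinct splits translate to distinct $(d-1)$-flats in $\aff P$. Once that geometric lemma is in place, the bipartite coloring is forced on us by a routine sign bookkeeping, and foldability is then immediate from the cited equivalence.
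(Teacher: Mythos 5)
Your proof is correct, but it takes a genuinely different route from the paper's. The paper's proof proceeds through Theorem~\ref{thm:zonotopal}: since the tight span $\tightspan{\cS}{P}$ is zonotopal, every $2$-face has an even number of vertices, which under the duality of Proposition~\ref{prop:duality} says that every interior codimension-$2$ cell of $\subdivision{\cS}{P}$ lies in an even number of maximal cells; foldability then follows from the criterion in \cite[Corollary 11]{ProjSimplePolytopes}. You instead work entirely on the primal side: using that $\Delta=\subdivision{\cS}{P}$ is the common refinement of the split subdivisions (Lemma~\ref{lem:weaklycompatible}), you argue that each interior codimension-$1$ face lies on a unique split hyperplane (any two split hyperplanes meet $\aff P$ in a set of dimension at most $d-2$), and from this you build an explicit proper $2$-coloring of the dual graph by parity of the sides $S_+$, before invoking the same cited corollary via the bipartite-dual-graph formulation. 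Your argument is more elementary and bypasses the zonotopal tight-span theorem entirely, which is a genuine simplification; the paper's version has the expository advantage of exhibiting foldability as a corollary of the structural Theorem~\ref{thm:zonotopal}, which is of independent interest. Both are sound, and your route would work whether or not one had already established Theorem~\ref{thm:zonotopal}.
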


\begin{proof}
  Let $\cS$ be a weakly compatible split system such that $\subdivision{\cS}{P}$ is a
  triangulation.  By Theorem \ref{thm:zonotopal} each $2$-dimensional face of the
  tight span $\tightspan{\cS}{P}$ has an even number of vertices.  This implies that
  $\subdivision{\cS}{P}$ is a triangulation of $P$ such that each of its interior
  codimension-$2$-cell is contained in an even number of maximal cells.  Now the claim follows
  from \cite[Corollary 11]{ProjSimplePolytopes}.
\end{proof}

\begin{example}
  Let $C_4$ be the $4$-dimensional cube. In Figure~\ref{fig:4-cube} there is a picture of
  the tight span $\tightspan{\cS}{C_4}$ of a split system $\cS$ of $C_4$ with 10 weakly
  compatible splits.  As proposed by Theorem~\ref{thm:zonotopal} the complex is zonotopal.
  It is $3$-dimensional and its $f$-vector reads $(24,36,14,1)$.  The number of vertices
  equals $24=4!$ which is the normalized volume of $C_4$, and hence $\subdivision{\cS}{C_4}$ is,
  in fact, a triangulation. By Corollary \ref{cor:foldable} this triangulation is
  foldable.
\end{example}

\begin{figure}[hbt]
  \includegraphics[width=.5\textwidth]{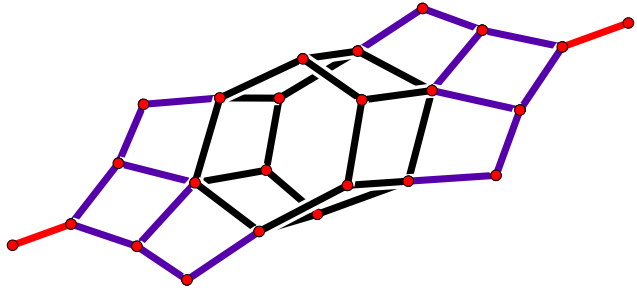}
  \caption{The tight span of a split triangulation of the $4$-cube.}
  \label{fig:4-cube}
\end{figure}

\section{Hypersimplices}\label{sec:hypersimplices}

As a notational shorthand we abbreviate $[n]:=\{1,2,\dots,n\}$ and
$\binom{[n]}{k}:=\smallSetOf{X\subseteq[n]}{\# X=k}$.  The $k$-th \emph{hypersimplex} in
$\RR^n$ is defined as
\[
\Hypersimplex{k}{n} \ := \ \SetOf{x\in[0,1]^n}{\sum_{i=1}^n x_i=k} \ = \ \conv\SetOf{\sum_{i\in A}e_i}{A\in\binom{[n]}{k}} \, .
\]
It is $(n-1)$-dimensional and satisfies the conditions of Section~\ref{sec:coherency}.
Throughout the following we assume that $n\ge 2$ and $1\le k\le n-1$.

A hypersimplex $\Hypersimplex{1}{n}$ is an $(n-1)$-dimensional simplex.  For arbitrary
$k\ge 1$ we have $\Hypersimplex{k}{n}\cong\Hypersimplex{n-k}{n}$.  Moreover, for $p\in[n]$
the equation $x_p=0$ defines a facet isomorphic to $\Hypersimplex{k}{n-1}$.  And, if $k\ge
2$, the equation $x_p=1$ defines a facet isomorphic to $\Hypersimplex{k-1}{n}$.  This list
of facets (induced by the facets of $[0,1]^n$) is exhaustive.  Since the hypersimplices
are not full-dimensional, the facet defining (affine) hyperplanes are not unique.
For the following it will be convenient to work with linear hyperplanes.  This way $x_p=1$
gets replaced by
\begin{equation}\label{eq:hypersimplex:facet}
  (k-1) x_p \ = \ \sum_{i\in [n]\setminus\{p\}}x_i \, .
\end{equation}

The triplet $(A,B;\mu)$ with $\emptyset\ne A,B\subsetneq[n]$, $A\cup B=[n]$, $A\cap
B=\emptyset$ and $\mu\in\NN$ defines the linear equation
\begin{equation}\label{eq:hypersimplex:split}
  \mu \sum_{i\in A} x_i \ = \ (k-\mu) \sum_{i\in B} x_i \, .
\end{equation}
The corresponding (linear) hyperplane in $\RR^n$ is called the
\emph{$(A,B;\mu)$-hyperplane}.  Clearly, $(A,B;\mu)$ and $(B,A;k-\mu)$ define the same
hyperplane.  The Equation~\eqref{eq:hypersimplex:facet} corresponds to the
$(\{p\},[n]\setminus\{p\};k-1)$-hyperplane.

\begin{lemma}\label{lem:hypersimplex:splits}
  The \emph{$(A,B;\mu)$-hyperplane} is a split hyperplane of $\Hypersimplex{k}{n}$ if and
  only if $k-\mu+1\le\#A\le n-\mu-1$ and $1\le\mu\le k-1$.
\end{lemma}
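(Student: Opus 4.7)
The plan is to parametrize the vertices of $\Hypersimplex{k}{n}$ by $k$-subsets $S\subseteq[n]$ via $v=\vones_S$, and to evaluate the linear form defining the $(A,B;\mu)$-hyperplane on each such vertex. Since $A\cup B=[n]$ and $A\cap B=\emptyset$, we have $\#(S\cap B)=k-\#(S\cap A)$, so Equation~\eqref{eq:hypersimplex:split} at $v=\vones_S$ becomes $\mu\,\#(S\cap A)=(k-\mu)(k-\#(S\cap A))$, which simplifies to $\#(S\cap A)=k-\mu$. Consequently $\vones_S$ lies on the $(A,B;\mu)$-hyperplane iff $\#(S\cap A)=k-\mu$, lies strictly on one side iff $\#(S\cap A)>k-\mu$, and lies strictly on the other side iff $\#(S\cap A)<k-\mu$.

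Next I would verify that, for every choice of $(A,B;\mu)$, the corresponding hyperplane automatically fails to separate any edge of $\Hypersimplex{k}{n}$ in its relative interior. The edges of the hypersimplex are precisely the segments $[\vones_S,\vones_{S'}]$ with $S'=(S\setminus\{i\})\cup\{j\}$ for some $i\in S$ and $j\in[n]\setminus S$; inspecting the four cases according to whether $i$ and $j$ belong to $A$ or to $B$ shows that $\#(S'\cap A)-\#(S\cap A)\in\{-1,0,+1\}$. In particular, the values of $\#(\cdot\cap A)$ at the endpoints of an edge can never strictly straddle the integer $k-\mu$, so no edge is separated. By the splitting criterion recalled just before the statement of the lemma, the hyperplane is therefore a split hyperplane if and only if it meets the relative interior of $\Hypersimplex{k}{n}$, equivalently if and only if it has vertices strictly on both sides.

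The remaining step is a short combinatorial count. As $S$ ranges over $\binom{[n]}{k}$, the integer $\#(S\cap A)$ attains exactly the values in $\{\max(0,k-n+\#A),\ldots,\min(k,\#A)\}$. The existence of some $S$ with $\#(S\cap A)\ge k-\mu+1$ is then equivalent to $\min(k,\#A)\ge k-\mu+1$, i.e., to $\mu\ge1$ together with $\#A\ge k-\mu+1$. Dually, $\#(S\cap A)\le k-\mu-1$ is realizable iff $\max(0,k-n+\#A)\le k-\mu-1$, equivalent to $\mu\le k-1$ together with $\#A\le n-\mu-1$. Conjoining both sets of conditions gives exactly the inequalities claimed in the lemma.

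I do not expect a serious obstacle: the decisive observation is that any $(A,B;\mu)$-hyperplane automatically respects the edges of $\Hypersimplex{k}{n}$ (because the combinatorial gap of $\#(\cdot\cap A)$ across an edge is at most one), which collapses the split condition to the interior condition. The only place that requires some care is the bookkeeping at the boundary values of $\#A$ and $\mu$, where $\min(k,\#A)$ and $\max(0,k-n+\#A)$ change behavior.
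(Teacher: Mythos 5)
Your proof is correct. It shares with the paper the decisive combinatorial fact (the value of the defining form changes by at most one unit of $\mu$ or $k-\mu$ across each hypersimplex edge $[\vones_S,\vones_{S'}]$ with $S'=(S\setminus\{i\})\cup\{j\}$), but you structure the argument more cleanly: you first observe that the ``no edge separated'' condition holds \emph{automatically} for every $(A,B;\mu)$-hyperplane with $\mu\in\NN$, because $\#(S\cap A)$ is integer-valued and varies by at most $1$ along an edge, so two adjacent vertices cannot strictly straddle the integer threshold $k-\mu$. This collapses the split criterion to the single question of whether the hyperplane meets the relative interior, which you then settle by noting that $\#(S\cap A)$ ranges over $\{\max(0,k-n+\#A),\dots,\min(k,\#A)\}$ and demanding values strictly on both sides of $k-\mu$. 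The paper instead argues the converse direction in two pieces --- an explicit interior point $x$ with $x_i=\frac{k-\mu}{\#A}$ on $A$ and $x_i=\frac{\mu}{\#B}$ on $B$, followed by a separate indirect argument with characteristic functions to rule out separated edges --- and handles the necessity of the inequalities via two case-by-case estimates. Both routes are sound; yours isolates the structural point (edge-non-separation is for free) and replaces the constructed interior point by a vertex count, which is an equivalent and somewhat tidier packaging. The only care point, which you flag, is that these equivalences rely on $\mu$ being an integer (so that $k-\mu$ is an integer strictly between consecutive attainable values of $\#(S\cap A)$) and on the standard fact that a hyperplane meets the relative interior of a polytope iff some vertices lie strictly on each side.
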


\begin{proof} 
  It is clear that the $(A,B;\mu)$-hyperplane does not meet the interior of
  $\Hypersimplex{k}{n}$ if $\mu\le 0$ or if $\mu\ge k$. Especially, we may assume
   that $k\ge 2$.
   
  Suppose now that $\#A\le k-\mu$.  Then each point $x\in\Hypersimplex{k}{n}$ satisfies
  $\sum_{i\in A}x_i\le k-\mu$ and $\sum_{i\in B}x_i\ge k-(k-\mu)=\mu$.  This implies that
  $\mu \sum_{i\in A} x_i \le (k-\mu) \sum_{i\in B} x_i$, which says that all points in
  $\Hypersimplex{k}{n}$ are contained in one of the two halfspaces defined by the
  $(A,B;\mu)$-hyperplane.  Hence it does not define a split.  A similar argument shows
  that $\#A\le n-\mu-1$ is necessary in order to define a split.

  Conversely, assume that $k-\mu+1\le\#A\le n-\mu-1$ and $1\le\mu\le k-1$.  We define a
  point $x\in\Hypersimplex{k}{n}$ by setting
  \[
  x_i \ := \
  \begin{cases}
    \frac{k-\mu}{\# A} & \text{if $i\in A$} \\
    \frac{\mu}{\# B} & \text{if $i\in B$} \, .
  \end{cases}
  \]
  Since $0<\frac{k-\mu}{\# A}<1$ and $0<\frac{\mu}{\# B}<1$ the point $x$ is contained in
  the (relative) interior of $\Hypersimplex{k}{n}$.  Moreover, $x$ satisfies the
  Equation~\eqref{eq:hypersimplex:split}, and so the $(A,B;\mu)$-hyperplane passes through
  the interior of $\Hypersimplex{k}{n}$.

  It remains to show that the $(A,B;\mu)$-hyperplane does not separate any edge.  Let $v$
  and $w$ be two adjacent vertices.  So we have some $\{p,q\}\in\tbinom{[n]}{2}$ with
  $v-w=e_p-e_q$.  Aiming at an indirect argument, we assume that $v$ and $w$ are on
  opposite sides of the $(A,B;\mu)$-hyperplane, that is, without loss of generality
  $\mu\sum_{i\in A}v_i>(k-\mu)\sum_{i\in B}v_i$ and $\mu\sum_{i\in A}w_i<(k-\mu)\sum_{i\in
    B}w_i$.  This gives
  \[
  0 \ < \ \mu\sum_{i\in A}v_i \, - \, (k-\mu)\sum_{i\in B} v_i \ = \ \mu(\chi_A(p)-\chi_A(q))
  \]
  and
  \[
  0 \ < \ (k-\mu)\sum_{i\in B}w_i \, - \, \mu\sum_{i\in A}w_i \ = \ (k-\mu)(\chi_B(p)-\chi_B(q))  \, ,
  \]
  where characteristic functions are denoted as $\chi_{\cdot}(\cdot)$.  Since $\mu>0$ and
  $\mu<k$ it follows that $\chi_A(q)<\chi_A(p)$ and $\chi_B(q)<\chi_B(p)$.  Now the
  characteristic functions take values in $\{0,1\}$ only, and we arrive at
  $\chi_A(q)=\chi_B(q)=0$ and $\chi_A(p)=\chi_B(p)=1$.  Both these equations contradict
  the fact that $(A,B)$ is a partition of $[n]$.  So we conclude that, indeed, the
  $(A,B;\mu)$-hyperplane defines a split.
\end{proof}

This allows to characterize the splits of the hypersimplices.

\begin{proposition}\label{prop:hypersimplex:splits}
  Each split hyperplane of $\Hypersimplex{k}{n}$ is defined by a linear equation of the
  type~\eqref{eq:hypersimplex:split}.
\end{proposition}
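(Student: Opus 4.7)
The plan is to show that any linear equation $\sum_{i=1}^n c_i x_i = 0$ defining a split hyperplane of $\Hypersimplex{k}{n}$ (which may be assumed nontrivial, so $2 \le k \le n-2$) has a coefficient vector $c$ that takes at most two distinct values on $[n]$. Combined with the observation that $c$ cannot be proportional to $\vones$ (otherwise the equation $\sum c_i x_i = 0$ would be trivially satisfied on $\aff \Hypersimplex{k}{n}$), this forces $c$ to take exactly two distinct values $\alpha \ne \beta$ on a bipartition $A \sqcup B = [n]$, which is the essence of form \eqref{eq:hypersimplex:split}.

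Once the two-value property is established, setting $\mu := \alpha k / (\alpha - \beta)$ rewrites $\sum c_i x_i = 0$ on the hypersimplex as $\mu \sum_{i\in A} x_i = (k - \mu) \sum_{i \in B} x_i$. Since the common face $S_+ \cap S_-$ has dimension $n-2 > 0$, it contains a vertex $e_S$ of $\Hypersimplex{k}{n}$; evaluating at $e_S$ forces $|S \cap A| = k - \mu$ to be a nonnegative integer, so $\mu \in \NN \cap [0, k]$. The extremes $\mu = 0$ and $\mu = k$ would correspond to $H$ being a proper face (defined by $\sum_A x_i = 0$ or $\sum_B x_i = 0$) rather than a split hyperplane, so $\mu \in \{1, \dots, k-1\}$, matching Lemma~\ref{lem:hypersimplex:splits}.

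The proof of the two-value claim proceeds by contradiction. Introduce $\phi \colon \binom{[n]}{k} \to \RR$ by $\phi(S) := \sum_{i \in S} c_i$; the no-edge-cut condition for the split reads $\phi(S) \phi(S') \ge 0$ whenever $|S \triangle S'| = 2$, and nontriviality of $H$ forces $\phi$ to attain both strictly positive and strictly negative values on vertices. Suppose $c$ takes at least three distinct values, with minimum $v_1$ and maximum $v_m$. Fix a permutation $\pi$ of $[n]$ with $c_{\pi(1)} \le \cdots \le c_{\pi(n)}$ and consider the sliding window $S_t := \{\pi(t+1), \dots, \pi(t+k)\}$ for $t = 0, \dots, n-k$. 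Consecutive $S_t, S_{t+1}$ are exchange-adjacent, $\phi(S_t)$ is nondecreasing in $t$, and its endpoints $\phi(S_0) < 0 < \phi(S_{n-k})$ are the minimum and maximum of $\phi$. The split condition then forces some $S_\tau$ with $\phi(S_\tau) = 0$ and $1 \le \tau \le n-k-1$ (both bounds strict, since $\phi$ is nonzero at the endpoints). Because $S_\tau$ is a consecutive block in sorted order and the intermediate-value block(s) separating $c^{-1}(v_1)$ from $c^{-1}(v_m)$ are nonempty, $S_\tau$ must contain some index $l^*$ with $v_1 < c_{l^*} < v_m$ --- otherwise $S_\tau \subseteq c^{-1}(v_1)$ or $S_\tau \subseteq c^{-1}(v_m)$ and $\phi(S_\tau) \in \{kv_1, kv_m\} \not\ni 0$. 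The bounds $1 \le \tau \le n-k-1$ further ensure that $i^- := \pi(1)$ and $i^+ := \pi(n)$, satisfying $c_{i^-} = v_1$ and $c_{i^+} = v_m$, both lie outside $S_\tau$. Setting $T := S_\tau \setminus \{l^*\}$, the two $k$-subsets $T \cup \{i^-\}$ and $T \cup \{i^+\}$ are exchange-adjacent and have $\phi$-values $v_1 - c_{l^*} < 0 < v_m - c_{l^*}$, contradicting the split condition.

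The main obstacle is the construction of this ``violating edge'': it requires both the existence of an intermediate-valued index $l^* \in S_\tau$ and the existence of extremal indices $i^\pm$ outside $S_\tau$. Both rely critically on the strict bounds $1 \le \tau \le n-k-1$, which in turn depend on the nontriviality of the split ensuring that the extreme values $\phi(S_0), \phi(S_{n-k})$ of the sliding window are nonzero.
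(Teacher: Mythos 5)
Your proof is correct, but it takes a genuinely different route from the paper's. The paper argues by induction on $n$ and $k$: it invokes Observation~\ref{obs:facets} to intersect the putative split hyperplane $H$ with two coordinate facets $F_p\cong\Hypersimplex{k}{n-1}$, applies the induction hypothesis (and the known shape of facet-defining hyperplanes) to each restriction, and then glues the resulting coefficient descriptions using the overlap $A\cap A'$ or $B\cap B'$ with $n\ge 4$. Your argument is instead a direct, one-shot combinatorial proof on the vertex set $\binom{[n]}{k}$: you sort the coefficient vector, observe that the no-edge-cut condition forces the nondecreasing sliding-window sequence $\phi(S_0),\dots,\phi(S_{n-k})$ to pass through $0$ at some interior index $\tau$, and then, assuming three or more distinct coefficient values, you manufacture a cut edge $\{T\cup\{i^-\},\;T\cup\{i^+\}\}$ by removing an intermediate-valued element $l^*\in S_\tau$ and swapping in the global extreme indices. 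This avoids the facet inductive machinery entirely and is self-contained modulo Lemma~\ref{lem:hypersimplex:splits}. Two small points you leave implicit but which are readily filled: (a) $v_1<0<v_m$, which you need for $\{kv_1,kv_m\}\not\ni 0$, follows because $\phi(S_0)<0$ forces the smallest coefficient to be negative and $\phi(S_{n-k})>0$ forces the largest to be positive; and (b) the integrality of $\mu$ comes from the split having no new vertices (so the middle cell $S_+\cap S_-$ has at least one hypersimplex vertex on it), not from its dimension being positive.
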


\begin{proof}
  Using Observation~\ref{obs:facets} and exploiting the fact that facets of hypersimplices
  are hypersimplices we can proceed by induction on $n$ and $k$ as follows.

  Our induction is based on the case $k=1$.  Since $\Hypersimplex{1}{n}$ is an
  $(n-1)$-simplex, which does not have any splits, the claim is trivially satisfied.  The
  same holds for $k=n-1$ as $\Hypersimplex{n-1}{n}\cong\Hypersimplex{1}{n}$.

  For the rest of the proof we assume that $2\le k\le n-2$.  In particular, this implies
  that $n\ge 4$.

  Let $\sum_{i\in[n]}\alpha_i x_i=0$ define a split hyperplane $H$ of
  $\Hypersimplex{k}{n}$. The facet defining hyperplane $F_p=\smallSetOf{x}{x_p=0}$ is
  intersected by $H$, and we have
  \[
  F_p\cap H \ = \ \biggl\{x\in\RR^n\,\biggl|\,\sum_{i\in[n]\setminus\{p\}}\alpha_ix_i=0=x_p\biggr\} \, .
  \]
  Three cases arise:
  \begin{enumerate}
  \item\label{it:hypersimplex:facet:0} $F_p\cap H$ is a facet of
   $F_p\cap\Hypersimplex{k}{n}\cong\Hypersimplex{k}{n-1}$ defined by $x_q=0$ (with
    $q\ne p$),
  \item\label{it:hypersimplex:facet:1} $F_p\cap H$ is a facet of
    $F_p\cap\Hypersimplex{k}{n}\cong\Hypersimplex{k}{n-1}$ as defined by
    Equation~\eqref{eq:hypersimplex:facet}, or
  \item\label{it:hypersimplex:split} $F_p\cap H$ defines a split of
    $F_p\cap\Hypersimplex{k}{n}\cong\Hypersimplex{k}{n-1}$.
  \end{enumerate}
  If $F_p\cap H$ is of type \eqref{it:hypersimplex:facet:0} then it follows that
  $\alpha_i=0$ for all $i\ne p$ and $\alpha_p\ne 0$.  As not all the $\alpha_i$ can vanish
  there is at most one $p\in[n]$ such that $F_p\cap H$ is of type
  \eqref{it:hypersimplex:facet:0}.  Since we could assume that $n\ge 4$ there are at least
  two distinct $p,q\in[n]$ such that $F_p\cap H$ and $F_q\cap H$ are of type
  \eqref{it:hypersimplex:facet:1} or \eqref{it:hypersimplex:split}.  By symmetry, we can
  further assume that $p=1$ and $q=n$.  So we get a partition $(A,B)$ of $[n-1]$ and a
  partition $(A',B')$ of $\{2,3,\dots,n\}$ with $\mu,\mu'\in\NN$ such that
  $F_1\cap H$ is defined by $x_1=0$ and
  \[
  \mu \sum_{i\in A} x_i \ = \ (k-\mu) \sum_{i\in B} x_i \, ,
  \]
  while $F_n\cap H$ is defined by $x_n=0$ and
  \[
  \mu' \sum_{i\in A'} x_i \ = \ (k-\mu') \sum_{i\in B'} x_i \, .
  \]
  We infer that there is a real number $\lambda$ such that $\alpha_i=\lambda\mu$ for all
  $i\in A$, $\alpha_i=\lambda(k-\mu)$ for all $i\in B$.  It remains to show that
  $\alpha_n\in\{\lambda\mu,\lambda(k-\mu)\}$.  Similarly, there is a real number
  $\lambda'$ such that $\alpha_i=\lambda'\mu'$ for all $i\in A'$,
  $\alpha_i=\lambda'(k-\mu')$ for all $i\in B'$.  As $n\ge 4$ we have $A\cap
  A'\ne\emptyset$ or $B\cap B'\ne\emptyset$.  We obtain $\alpha_i=\lambda\mu=\lambda'\mu'$
  for $i\in (A\cap A')\cup (B\cap B')$.  Finally, this shows that
  $\alpha_n\in\{\lambda'\mu',\lambda'(k-\mu')\}=\{\lambda\mu,\lambda(k-\mu)\}$, and this
  completes the proof.
\end{proof}

\begin{theorem}\label{thm:hypersimplex:n_splits}
  The total number of splits of the hypersimplex $\Hypersimplex{k}{n}$ (with $k\leq n/2$) equals
  \[
  \ (k-1)\left(2^{n-1}-(n+1)\right)-\sum_{i=2}^{k-1}(k-i)\binom{n}{i} \; .
  \]
\end{theorem}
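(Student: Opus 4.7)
My plan is to count splits directly via the parametrization of Proposition~\ref{prop:hypersimplex:splits}. By Lemma~\ref{lem:hypersimplex:splits}, every split of $\Hypersimplex{k}{n}$ arises from a triple $(A, B; \mu)$ where $(A, B)$ is an ordered partition of $[n]$ into two nonempty parts, $\mu \in \{1, \dots, k-1\}$, and $k - \mu + 1 \le \#A \le n - \mu - 1$. I first argue that two such admissible triples define the same split hyperplane in $\RR^n$ if and only if they are related by the involution $(A, B; \mu) \leftrightarrow (B, A; k - \mu)$ already observed after~\eqref{eq:hypersimplex:split}: the coefficients of $x_i$ in~\eqref{eq:hypersimplex:split} take only the two values $\mu > 0$ on $A$ and $-(k-\mu) < 0$ on $B$, so proportionality of two such equations forces $\{A, B\}$ to agree as an unordered partition and then determines $\mu$ up to this swap. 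The involution has no fixed points since $A \ne B$, so splits are in a $2$-to-$1$ correspondence with admissible triples.

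Second, I count admissible triples by binomial bookkeeping. For each fixed $\mu$ the number of admissible $A \subseteq [n]$ is
\[
T(\mu) \;=\; \sum_{a=k-\mu+1}^{n-\mu-1}\binom{n}{a} \;=\; 2^n \,-\, \sum_{a=0}^{k-\mu}\binom{n}{a} \,-\, \sum_{a=0}^{\mu}\binom{n}{a},
\]
where the second equality uses $\binom{n}{a} = \binom{n}{n-a}$ to fold the upper tail $\sum_{a=n-\mu}^{n}\binom{n}{a}$ into a lower one. The substitution $\nu := k - \mu$ shows that the two tail sums contribute equally when summed over $\mu \in \{1, \dots, k-1\}$, so the total count of admissible triples reduces to $(k-1)\cdot 2^{n} - 2\sum_{\mu=1}^{k-1}\sum_{a=0}^{\mu}\binom{n}{a}$.

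Finally I swap the order of summation in the remaining double sum: for $i \in \{0, 1, \dots, k-1\}$ the term $\binom{n}{i}$ appears once for every $\mu \in \{\max(i,1), \dots, k-1\}$, contributing weight $k-1$ when $i = 0$ and $k - i$ when $i \ge 1$. Isolating the $i = 0$ and $i = 1$ contributions (which produce $-(k-1)$ and $-(k-1)n$ respectively after halving) and dividing by $2$ for the $2$-to-$1$ correspondence collects the expression into the two pieces $(k-1)(2^{n-1} - (n+1))$ and $-\sum_{i=2}^{k-1}(k-i)\binom{n}{i}$ claimed in the theorem. The only nontrivial step is verifying the $2$-to-$1$ multiplicity of the parametrization; the rest is mechanical manipulation of partial binomial sums. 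The hypothesis $k \le n/2$ enters only to ensure that the displayed index ranges are nontrivial and to avoid redundancy with the isomorphism $\Hypersimplex{k}{n} \cong \Hypersimplex{n-k}{n}$.
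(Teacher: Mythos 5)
Your proof is correct and uses the same basic strategy as the paper: parametrize splits by $(A,B;\mu)$-triples via Lemma~\ref{lem:hypersimplex:splits}, account for the $(A,B;\mu)\leftrightarrow(B,A;k-\mu)$ double-counting, and reduce to binomial bookkeeping. The difference lies in the order of summation. The paper sums first over the cardinality $i=\#A$, arriving at $\tfrac12\sum_{i=2}^{n-2}\bigl(\min(k,n-i)-\max(1,k-i+1)\bigr)\binom{n}{i}$, and then invokes the hypothesis $k\le n/2$ precisely to collapse the $\min/\max$ expression to $\min(i,k,n-i)-1$ before breaking the range of $i$ into three pieces. You instead sum over $\mu$ first, write each $T(\mu)$ as $2^n$ minus two symmetric binomial tails, and exploit the $\mu\leftrightarrow k-\mu$ involution on those tails to avoid the $\min/\max$ casework altogether. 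Because of this, your derivation never actually uses $k\le n/2$, and your closing remark that this hypothesis ensures the index ranges are nontrivial misdescribes your own argument: the identity for $T(\mu)$ is valid for all $1\le k\le n-1$ (empty ranges yield $T(\mu)=0$ on both sides), and in fact the resulting closed form is invariant under $k\mapsto n-k$, so the restriction in the theorem is only a normalization, not something your route needs. A further small plus is that you spell out why the parametrization is exactly $2$-to-$1$ via the sign/proportionality argument; the paper asserts this in a single line.
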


\begin{proof}
  We have to count the $(A,B;\mu)$-hyperplanes with the restrictions listed in
  Lemma~\ref{lem:hypersimplex:splits}. So we take a set $A\subset[n]$ with at least 2 and
  at most $n-2$ elements. If $A$ has cardinality $i$ then there are
  $\min(k-1,n-i-1)-\max(1,k-i+1)+1$ choices for $\mu$. Recall that $(A,B;\mu)$ and
  $(B,A;k-\mu)$ define the same split; in this way we have counted each split twice. So we
  get
   \[ 
    \frac 12 \sum_{i=2}^{n-2}\big(\min(k,n-i)-\max(1,k-i+1)\big)\binom{n}{i}
    \ = \ \frac 12 \sum_{i=2}^{n-2}\big(\min(i,k,n-i)-1\big)\binom{n}{i}
  \]
  splits, where the equality holds since $k\leq n/2$. For a further simplification
  we rewrite the sum to get
  \begin{equation*}\begin{split}
    \frac12\sum_{i=2}^{k-1}&(i-1)\binom{n}{i}+\frac12\sum_{i=k}^{n-k}(k-1)\binom{n}{i}
    +\frac12\sum_{i=n-k+1}^{n-2}(n-i-1)\binom{n}{i}\\
    &=\ \frac 12(k-1)\sum_{i=2}^{n-2}\binom{n}{i} 
    +\frac12\sum_{i=2}^{k-1}\big(i-1-(k-1)\big)\binom{n}{i}+
    \frac12\sum_{i=n-k+1}^{n-2}\big(n-i-1-(k-1)\big)\binom{n}{i}\\
    &=\ (k-1)\left(2^{n-1}-(n+1)\right)-\sum_{i=2}^{k-1}(k-i)\binom{n}{i} \,.
  \end{split}\end{equation*}
\end{proof}

If we have two distinct splits $(A,B;\mu)$ and $(C,D;\nu)$ then either $\{A\cap C, A\cap D, B\cap
C, B\cap D\}$ is a partition of $[n]$ into four parts, or exactly one of the
four intersections is empty.  If, for instance, $B\cap D=\emptyset$ then $B\subseteq C$
and $D\subseteq A$.

\begin{proposition}\label{prop:hypersimplex:compatible}
  Two splits $(A,B;\mu)$ and $(C,D;\nu)$ of $\Hypersimplex{k}{n}$ are compatible if and
  only if one of the following holds:
    \begin{align*}
      \#(A\cap C) \ &\le \ k-\mu-\nu \, , &  \#(A\cap D) \ &\le \ \nu-\mu \, ,\\
      \#(B\cap C) \ &\le \ \mu-\nu   \, , &  \text{or }\quad \#(B\cap D) \ &\le \ \mu+\nu-k \, .
    \end{align*}
\end{proposition}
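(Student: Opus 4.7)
My approach is to characterize compatibility directly: the splits $(A,B;\mu)$ and $(C,D;\nu)$ are compatible iff their split hyperplanes $H_1$ and $H_2$ do not meet the relative interior of $\Hypersimplex{k}{n}$. Because every interior point satisfies $\sum_i x_i = k$, equation~\eqref{eq:hypersimplex:split} for $H_1$ is equivalent to $\sum_{i\in A}x_i = k-\mu$, and similarly $\sum_{i\in C}x_i = k-\nu$ for $H_2$. Let $n_1,n_2,n_3,n_4$ and $s_1,s_2,s_3,s_4$ denote the sizes and the partial $x$-sums of the four cell parts $A\cap C,\ A\cap D,\ B\cap C,\ B\cap D$. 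Parameterizing the affine line of solutions of the two equations by $t := s_4$ gives
\[
s_1 \ = \ k-\mu-\nu+t\,, \quad s_2 \ = \ \nu-t\,, \quad s_3 \ = \ \mu-t\,, \quad s_4 \ = \ t\,.
\]
An interior point of $\Hypersimplex{k}{n}$ realizing these partial sums exists iff $0 < s_j < n_j$ for every nonempty cell part (any such prescription can be realized by choosing the $x_i$ in $(0,1)$), while $s_j = 0$ is forced whenever $n_j = 0$.

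In the generic case where all four cell parts are nonempty, this reduces to the existence of $t$ in the open interval $(L,U)$ with $L = \max\{0,\,\mu+\nu-k,\,\nu-n_2,\,\mu-n_3\}$ and $U = \min\{\mu,\,\nu,\,n_4,\,n_1+\mu+\nu-k\}$. Compatibility is therefore equivalent to $L \ge U$, i.e., to some lower bound dominating some upper bound. I enumerate the sixteen such pairings: twelve are ruled out by the constraints $1\le\mu,\nu\le k-1$ together with the cardinality bounds $k-\mu+1 \le \#A \le n-\mu-1$ (and their analogues for $\#B$, $\#C$, $\#D$) from Lemma~\ref{lem:hypersimplex:splits}. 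For instance, $\nu-n_2 \ge n_4$ would force $\#D = n_2+n_4 \le \nu$, contradicting $\#D \ge \nu+1$. The four surviving pairings are the ``diagonal'' ones---each lower bound paired with its opposite upper bound---and they translate precisely to the four inequalities stated in the proposition.

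Finally I dispose of the degenerate cases where some $n_j = 0$. If exactly one part is empty, say $A\cap C$, then $s_1 = 0$ forces $t = \mu+\nu-k$ and an interior intersection exists iff $0 < \mu+\nu-k < n_4$, i.e., iff neither $\#(A\cap C) \le k-\mu-\nu$ nor $\#(B\cap D) \le \mu+\nu-k$ holds; the other three single-empty-part cases are handled symmetrically. If two parts are empty, then $\{A,B\} = \{C,D\}$ as partitions, so $H_1$ and $H_2$ are parallel and (being distinct splits) disjoint, hence trivially compatible; the stated inequalities also hold trivially, since either $n_1 = n_4 = 0$ (and then one of $0 \le k-\mu-\nu$ or $0 \le \mu+\nu-k$ is satisfied) or $n_2 = n_3 = 0$ (and then one of $0 \le \nu-\mu$ or $0 \le \mu-\nu$ is satisfied). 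The main obstacle is the bookkeeping for the sixteen-case enumeration in the generic step; conceptually the argument is just feasibility of a one-parameter interval.
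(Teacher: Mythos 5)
Your proof is correct and follows essentially the same route as the paper: you parametrize the intersection of the two hyperplanes (restricted to $\Hypersimplex{k}{n}$) by $t = x_{B\cap D}$, reduce to feasibility of a one-parameter open interval, and extract the four non-redundant pairings using Lemma~\ref{lem:hypersimplex:splits}. The only cosmetic point is that in your degenerate case (one empty part) you should also note, as the paper does, that the two ``cross'' inequalities $\#(A\cap D)\le\nu-\mu$ and $\#(B\cap C)\le\mu-\nu$ are vacuously false there by the cardinality bounds, so that your criterion genuinely coincides with the four-way disjunction in the statement.
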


For an arbitrary set $I\subseteq[n]$ we abbreviate $x_I:=\sum_{i\in I}x_i$.  In
particular, $x_\emptyset=0$ and for $x\in\Hypersimplex{k}{n}$ one has $x_{[n]}=k$.

\begin{proof}
  Let $x\in\Hypersimplex{k}{n}$ be in the intersection of the $(A,B;\mu)$-hyperplane and
  the $(C,D;\nu)$-hyperplane.  Our split equations take the form
  \begin{align*}
    \mu(x_{A\cap C}+x_{A\cap D}) \ &= \ (k-\mu)(x_{B\cap C}+x_{B\cap D}) \quad \text{and}\\
    \nu(x_{A\cap C}+x_{B\cap C}) \ &= \ (k-\nu)(x_{A\cap D}+x_{B\cap D}) \, .
  \end{align*}
  In view of $(A\cap C)\cup(A\cap D)\cup(B\cap C)\cup(B\cap D)=[n]$ we additionally have
  $x_{A\cap C}+x_{A\cap D}+x_{B\cap C}+x_{B\cap D}=k$, and thus we arrive at the
  equivalent system of linear equations
  \begin{equation}\label{eq:hypersimplex:compatibility:0}
    x_{A\cap C} \ = \ k-\mu-\nu+x_{B\cap D} \, , \quad x_{A\cap D} \ = \ \nu-x_{B\cap D} \,
    , \quad \text{and} \quad x_{B\cap C} \ = \ \mu-x_{B\cap D}
  \end{equation}
  from which we can further derive
  \begin{equation}\label{eq:hypersimplex:compatibility:1}
    x_A \ = \ k-\mu \, , \quad x_B \ = \ \mu \, , \quad x_C \ = \ k-\nu \, , \quad \text{and}
    \quad x_D \ = \ \nu \, .
  \end{equation}
  Now the two given splits are \emph{incompatible} if and only if there exists a point
  $x\in(0,1)^n$ satisfying the conditions \eqref{eq:hypersimplex:compatibility:0}.

   Suppose first that none of the four intersections $A\cap C$,
  $A\cap D$, $B\cap C$, and $B\cap D$ is empty.  Then $x\in(0,1)^n$ satisfies the
  Equations~\eqref{eq:hypersimplex:compatibility:0} if and only if the system of
  inequalities in $x_{B\cap D}$
  \begin{align}\label{eq:hypersimplex:compatibility:2}
  \begin{array}{lclcl}
    0 & < & x_{B\cap D} & < &  \#(B\cap D)\\
    0 & < & k-\mu-\nu+x_{B\cap D} & < & \#(A\cap C) \\
    0 & < & \mu-x_{B\cap D} & < & \#(B\cap C) \\
    0 & < & \nu-x_{B\cap D} & < &\#(A\cap D)
  \end{array}
  \end{align}
  has a solution.  This is equivalent to the following system of inequalities:
  \[
  \begin{array}{lclcl}
    0 & < & x_{B\cap D} & < &  \#(B\cap D) \\
    \mu+\nu-k & < & x_{B\cap D} & < & \#(A\cap C)+\mu+\nu-k\\
    \mu-\#(B\cap C) & < & x_{B\cap D} & < & \mu \\
    \nu-\#(A\cap D) & < & x_{B\cap D} & < & \nu \, .
  \end{array}
  \]
  Obviously, the latter system admits a solution if and only if each of the four terms on
  the left is smaller than each of the four terms on the right.  Most of the resulting
  16~inequalities are redundant.  The following four inequalities remain
  \begin{align*}\label{eq:hypersimplex:compatibility:3}
    \#(A\cap C) \ &> \ k-\mu-\nu\\
    \#(A\cap D) \ &> \ \nu-\mu\\
    \#(B\cap C) \ &> \ \mu-\nu\\
    \#(B\cap D) \ &> \ \mu+\nu-k \, ,
  \end{align*}
  and this completes the proof of this case.
  
  For the remaining cases, we can assume by symmetry that $A\cap  C=\emptyset$.  
  Then $x\in(0,1)^n$ satisfies the
  Equations~\eqref{eq:hypersimplex:compatibility:0} if and only if $x_{B\cap
    D}=\mu+nu=-k$, $x_{A\cap D}=k-\mu$, and $x_{B\cap C}=k-\nu$. So the splits are not
  compatible if and only if
  \[
  \begin{array}{lclcl}
    0 & < & k-\mu & < &  \#(A\cap D)=\#A \\
    0 & < & k-\nu & < & \#(B\cap C)=\#C \\
    0 & < & \mu+\nu-k & < & \#(B\cap D)\,.
  \end{array}
  \]
  Since, by Lemma \ref{lem:hypersimplex:splits}, the first two inequalities hold for all
  splits this proves that the splits are compatible if and only if
  \[
  \#(A\cap C)=0\leq k-\mu-\nu \quad\text{or}\quad \#(B\cap D)\leq\mu+nu-k.
  \]
  However, again by using Lemma \ref{lem:hypersimplex:splits}, one has $\#(A\cap
  D)=\#A>k-\mu>\nu-\mu$, so $\#(A\cap D) \le\nu-\mu$ and, similarly, $\#(B\cap C)
  \le\mu-\nu$ cannot be true.  This completes the proof.
\end{proof}

In fact, the four cases of the proposition are equivalent in the sense that, by renaming
the four sets and exchanging $\mu$ and $\nu$ or $\mu$ and $k-\mu$ in a suitable way, one
will always be in the first case.

\begin{example}\label{ex:36:compatible}
  We consider the case $k=3$ and $n=6$.  For instance, the splits
  $(\{1,2,6\},\{3,4,5\};2)$ and $(\{4,5,6\},\{1,2,3\};2)$ are compatible since the
  intersection $\{3,4,5\}\cap\{1,2,3\}=\{3\}$ has only one element and $2+2-3=1$, that is,
  the inequality ``$\#(C\cap D)\le\mu+\nu-k$'' is satisfied.
\end{example}

\begin{corollary}
  Two splits $(A,B;\mu)$ and $(A,B;\nu)$ of $\Hypersimplex{k}{n}$ are always compatible.
\end{corollary}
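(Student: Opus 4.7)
The plan is to invoke Proposition~\ref{prop:hypersimplex:compatible} in the special case $(C,D)=(A,B)$. Since $(A,B)$ is a partition of $[n]$, one immediately has $A\cap D=A\cap B=\emptyset$ and $B\cap C=B\cap A=\emptyset$, so two of the four intersection cardinalities appearing in the statement of the proposition vanish automatically, and only the remaining two inequalities have any content.

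Assume without loss of generality that $\mu\le\nu$ (if $\mu=\nu$ the two splits coincide, and otherwise we simply relabel). Under this assumption the inequality $\#(A\cap D)\le\nu-\mu$ reads $0\le\nu-\mu$, which holds by the assumption. Proposition~\ref{prop:hypersimplex:compatible} then directly yields compatibility. There is no real obstacle here: all the work is already contained in the compatibility characterization; the corollary is essentially a bookkeeping observation about which of the four inequalities becomes trivial when the two partitions agree.

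Alternatively, one can give a direct geometric argument, which is perhaps more illuminating. For any point $x$ in the relative interior of $\Hypersimplex{k}{n}$ the partition property yields $x_A+x_B=k$, so the first split equation $\mu x_A=(k-\mu)x_B$ forces $x_A=k-\mu$, and likewise the second split forces $x_A=k-\nu$. These two conditions are incompatible whenever $\mu\ne\nu$, so the intersection of the two split hyperplanes does not meet the interior of $\Hypersimplex{k}{n}$; that is, the splits are compatible by definition. Either route is short, and there is no step that requires delicate estimation or case analysis beyond the partition bookkeeping.
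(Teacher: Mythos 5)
Your first argument is essentially identical to the paper's proof: the paper assumes $\mu\ge\nu$ and invokes the inequality $\#(B\cap C)\le\mu-\nu$, while you assume $\mu\le\nu$ and invoke $\#(A\cap D)\le\nu-\mu$; since both $A\cap D$ and $B\cap C$ are empty when $(C,D)=(A,B)$, these are the same observation up to relabeling. Your second, geometric argument is a genuinely different and arguably cleaner route. It bypasses Proposition~\ref{prop:hypersimplex:compatible} entirely and works straight from the definition of compatibility: the constraint $x_A+x_B=k$ together with the split equation forces $x_A=k-\mu$ on one hyperplane and $x_A=k-\nu$ on the other, so for $\mu\ne\nu$ the two hyperplanes have empty intersection with all of $\Hypersimplex{k}{n}$ (not merely with its interior, which is even stronger than compatibility requires). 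This elementary argument is more self-contained and makes geometrically transparent why same-partition splits with different thresholds never interfere, whereas the paper's route treats the corollary as a trivial instance of its general pairwise compatibility criterion. Both are correct.
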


\begin{proof}
  Without loss of generality we can assume that $\mu\ge\nu$.  Then the condition
  ``$\#(B\cap C) \le \mu-\nu$'' of Proposition~\ref{prop:hypersimplex:compatible} is
  satisfied.
\end{proof}

In Proposition \ref{prop:weak-split-complete} below we will show that the $1$-skeleton of
the weak split complex of any hypersimplex is always a complete graph.  In particular, the
weak split complex of $\Hypersimplex{k}{n}$ is connected. (Or it is void if
$k\in\{1,n-1\}$.)

\section{Finite Metric Spaces}\label{sec:metric}

This section revisits the classical case, studied in the papers by Bandelt and
Dress~\cite{BandeltDress86,BandeltDress92}; see also Isbell~\cite{Isbell64}.  Its purpose
is to show how some of the key results can be obtained as immediate corollaries to our
results above.

Let $\delta:\tbinom{[n]}{2}\to\nonnegRR$ be a metric on the finite set $[n]$; that is,
$\delta$ is a symmetric dissimilarity function which obeys the triangle inequality.  By
setting
\[
w_\delta(e_i+e_j) \ := \ -\delta(i,j)
\]
each metric $\delta$ defines a weight function $w_\delta$ on the second hypersimplex
$\Hypersimplex{2}{n}$.  Hence the results for $k=2$ from Section~\ref{sec:hypersimplices}
can be applied here.  The \emph{tight span} of $\delta$ is the tight span
$\tightspan{w_\delta}{\Hypersimplex{2}{n}}$.

Let $S=(A,B)$ be a \emph{split partition} of the set $[n]$, that is, $A,B\subseteq[n]$
with $A\cup B=[n]$, $A\cap B=\emptyset$, $\#A\ge2$, and $\#B\ge2$. This gives rise to the
\emph{split metric}
\[
\delta_S(i,j) \ := \
\begin{cases}
  0 & \text{if $\{i,j\}\subseteq A$ or $\{i,j\}\subseteq B$,} \\
  1 & \text{otherwise.}
\end{cases}
\]

The weight function $w_{\delta_S}=-\delta_S$ induces a split of the second hypersimplex
$\Hypersimplex{2}{n}$, which is induced by the $(A,B;1)$-hyperplane defined in
Equation~\eqref{eq:hypersimplex:split}.  Proposition~\ref{prop:hypersimplex:splits} now
implies the following characterization.

\begin{corollary}
  Each split of $\Hypersimplex{2}{n}$ is induced by a split metric.
\end{corollary}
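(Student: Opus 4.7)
The plan is to specialize Proposition~\ref{prop:hypersimplex:splits} and Lemma~\ref{lem:hypersimplex:splits} to the case $k=2$ and then match the resulting parametrization of split hyperplanes with the split metrics defined above.

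First, I would invoke Proposition~\ref{prop:hypersimplex:splits}, which tells us that every split hyperplane of $\Hypersimplex{2}{n}$ has the form of an $(A,B;\mu)$-hyperplane. Next, I would apply Lemma~\ref{lem:hypersimplex:splits} with $k=2$: the condition $1\le\mu\le k-1$ forces $\mu=1$, and the size constraint $k-\mu+1\le\#A\le n-\mu-1$ becomes $2\le\#A\le n-2$, which is equivalent to $\#A\ge 2$ and $\#B\ge 2$. Hence every split of $\Hypersimplex{2}{n}$ is an $(A,B;1)$-split for some split partition $(A,B)$ of $[n]$ in the sense defined in the present section.

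Finally, I would use the observation (already recorded in the paragraph preceding the statement) that for such a split partition $S=(A,B)$ the weight function $w_{\delta_S}$ induces precisely the $(A,B;1)$-hyperplane as its split hyperplane. Combining the two directions gives the claim: every split arises as $(A,B;1)$ for a split partition, and every such split is induced by $w_{\delta_S}$.

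The main obstacle is essentially bookkeeping rather than substance; since the corollary is a direct specialization of the general classification for hypersimplices, the only nontrivial verification is that the size restrictions coming from Lemma~\ref{lem:hypersimplex:splits} for $k=2$ coincide exactly with the definition of a split partition ($\#A\ge 2$ and $\#B\ge 2$), which is immediate.
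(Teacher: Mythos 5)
Your proposal is correct and follows essentially the same route as the paper: the paper's implicit argument is precisely to combine Proposition~\ref{prop:hypersimplex:splits} (every split hyperplane is an $(A,B;\mu)$-hyperplane) with the observation, stated just before the corollary, that $w_{\delta_S}$ induces the $(A,B;1)$-hyperplane; you have simply spelled out the $k=2$ specialization of Lemma~\ref{lem:hypersimplex:splits} explicitly.
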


Specializing the formula in Theorem~\ref{thm:hypersimplex:n_splits} with $k=2$ gives the
following.

\begin{corollary}
  The total number of splits of the hypersimplex $\Hypersimplex{2}{n}$ equals $2^{n-1}-n-1$.
\end{corollary}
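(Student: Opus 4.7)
The plan is to observe that this is essentially a direct specialization of Theorem~\ref{thm:hypersimplex:n_splits} to $k=2$. Setting $k=2$ in the displayed formula, the sum $\sum_{i=2}^{k-1}(k-i)\binom{n}{i}$ becomes an empty sum (the upper limit $k-1=1$ is below the lower limit $2$), and the prefactor $k-1$ is just $1$. Hence the entire expression collapses to $2^{n-1}-(n+1) = 2^{n-1}-n-1$, which is exactly what is claimed. The hypothesis $k \le n/2$ of Theorem~\ref{thm:hypersimplex:n_splits} is satisfied for all $n \ge 4$; the cases $n=2,3$ (where $\Hypersimplex{2}{2}$ is a point and $\Hypersimplex{2}{3}$ is a triangle) can be checked by inspection, and agree with the formula.

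Alternatively, and perhaps more satisfyingly, one can give a short direct enumeration from the preceding structural results. By Proposition~\ref{prop:hypersimplex:splits}, every split of $\Hypersimplex{2}{n}$ has the form $(A,B;\mu)$-hyperplane for a partition $(A,B)$ of $[n]$ and some $\mu \in \NN$. Lemma~\ref{lem:hypersimplex:splits} for $k=2$ forces $1 \le \mu \le k-1 = 1$, so $\mu = 1$, and requires $k-\mu+1=2 \le \#A \le n-\mu-1=n-2$. The number of subsets $A \subseteq [n]$ satisfying $2 \le \#A \le n-2$ equals $2^n - 2 - 2n$ (excluding $\emptyset$, $[n]$, and the $n$ singletons and $n$ co-singletons). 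Since $(A,B;1)$ and $(B,A;k-\mu) = (B,A;1)$ define the same split, each split is counted twice, giving $(2^n - 2n - 2)/2 = 2^{n-1} - n - 1$.

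There is no real obstacle here; the only mild care needed is the observation that for $k=2$ the involution $(A,B;\mu) \leftrightarrow (B,A;k-\mu)$ identifies $(A,B;1)$ with $(B,A;1)$ but never has a fixed point (since $A \ne B$), so the division by two is exact. I would present the first approach as the proof, since it is a literal one-line specialization of the previously established counting formula.
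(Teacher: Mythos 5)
Your first approach---setting $k=2$ in Theorem~\ref{thm:hypersimplex:n_splits}, noting that the sum $\sum_{i=2}^{k-1}(k-i)\binom{n}{i}$ is empty, and reading off $2^{n-1}-n-1$---is exactly the paper's proof; the corollary is introduced in the text with precisely this intent, and your second (direct) enumeration is an independent confirmation rather than a different route. One tiny slip worth flagging: $\Hypersimplex{2}{2}$ lies outside the paper's standing range $1\le k\le n-1$, and the formula gives $-1$ rather than $0$ there, so it is not a valid check; this has no bearing on the corollary as stated.
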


The following corollary and proposition shows that our notions of compatibility and weak
compatibility agree with those of Bandelt and Dress~\cite{BandeltDress92} for in the
special case of~$\Hypersimplex{2}{n}$.

\begin{corollary}[Hirai~\cite{Hirai06}, Proposition 4.16]
  Two splits $(A,B)$ and $(C,D)$ of $\Hypersimplex{2}{n}$ are compatible if and only if
  one of the four sets $A\cap C$, $A\cap D$, $B\cap C$, and $B\cap D$ is empty.
\end{corollary}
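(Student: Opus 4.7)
The plan is to derive this as a direct specialization of Proposition~\ref{prop:hypersimplex:compatible} to the case $k=2$. First I would observe that by Lemma~\ref{lem:hypersimplex:splits} (or by the discussion preceding Proposition~\ref{prop:hypersimplex:splits}), any split of $\Hypersimplex{2}{n}$ is defined by an $(A,B;\mu)$-hyperplane with $1 \le \mu \le k-1 = 1$, so necessarily $\mu = 1$. Consequently, for two splits of $\Hypersimplex{2}{n}$ we may write them as $(A,B;1)$ and $(C,D;1)$, with $\mu = \nu = 1$.

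Next I would substitute $k=2$ and $\mu=\nu=1$ into the four numerical conditions of Proposition~\ref{prop:hypersimplex:compatible}:
\begin{align*}
  k-\mu-\nu \ &= \ 0, & \nu-\mu \ &= \ 0,\\
  \mu-\nu \ &= \ 0, & \mu+\nu-k \ &= \ 0.
\end{align*}
Thus each of the inequalities $\#(A\cap C) \le k-\mu-\nu$, $\#(A\cap D) \le \nu-\mu$, $\#(B\cap C) \le \mu-\nu$, $\#(B\cap D) \le \mu+\nu-k$ becomes the statement that the corresponding intersection has at most $0$ elements, i.e., is empty. Hence compatibility of $(A,B;1)$ and $(C,D;1)$ is equivalent to at least one of $A\cap C$, $A\cap D$, $B\cap C$, $B\cap D$ being empty, which is exactly the claim.

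There is essentially no obstacle here: the work has already been done in Proposition~\ref{prop:hypersimplex:compatible}, and the corollary is just the observation that for $k=2$ the unique admissible value $\mu=\nu=1$ forces all four right-hand sides in the compatibility criterion to vanish. The only small point to note explicitly is that the four sets $A\cap C$, $A\cap D$, $B\cap C$, $B\cap D$ partition $[n]$, so the statement ``one of them is empty'' coincides with the ``remaining cases'' branch of the proof of Proposition~\ref{prop:hypersimplex:compatible}, consistent with our reading of the first (generic) case as a vacuous conjunction of strict inequalities when $k=\mu+\nu$.
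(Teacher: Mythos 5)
Correct, and essentially identical to the paper's proof: both observe that $k=2$ forces $\mu=\nu=1$, substitute into Proposition~\ref{prop:hypersimplex:compatible} so that all four right-hand sides vanish, and conclude that compatibility is equivalent to one of the four intersections being empty.
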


\begin{proof}
  Let $(A,B)$ and $(C,D)$ be splits of $\Hypersimplex{2}{n}$.  We are in the situation of
  Proposition~\ref{prop:hypersimplex:compatible} with $k=2$ and $\mu=\nu=1$.  Hence all
  the right hand sides of the four inequalities in
  Proposition~\ref{prop:hypersimplex:compatible} yield zero, and this gives the claim.
\end{proof}

For a splits $S=(A,B)$ of $\Hypersimplex{2}{n}$ and $m\in [n]$ we denote by $S(m)$ that of 
the two set $A$, $B$ with $m\in S(m)$.

\begin{proposition}\label{prop:hypersimplex-weaklycompatibel}
  A set $\cS$ of splits of $\Hypersimplex{2}{n}$ is weakly compatible if
  and only if there does not exist $m_0,m_1,m_2,m_3\in[n]$ and $S_1,S_2,S_3\in\cS$ such
  that $m_i\in S_j$ if and only if $i=j$.
\end{proposition}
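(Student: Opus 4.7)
Plan: This proposition is the Bandelt--Dress four-point characterization of weakly compatible split systems, rephrased in the polytopal language of Section~\ref{sec:coherency}. I would prove the two directions separately, using Lemma~\ref{lem:weaklycompatible} and Proposition~\ref{prop:coherent} to translate weak compatibility into the statement $\alpha^{w_\cS}_{w_S}\ge 1$ for every $S\in\cS$, where the coherency index is given by \eqref{eq:coherency-index}.

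For the implication ``4-point obstruction $\Rightarrow$ not weakly compatible'', I would fix $S_1,S_2,S_3\in\cS$ and $m_0,m_1,m_2,m_3\in[n]$ with $S_j(m_j)\cap\{m_0,m_1,m_2,m_3\}=\{m_j\}$ for $j=1,2,3$, and compute $\alpha^{w_\cS}_{w_{S_1}}$ explicitly. Since $|A_{S_1}|\ge 2$ by Lemma~\ref{lem:hypersimplex:splits}, pick any $a\in A_{S_1}\setminus\{m_1\}$. Evaluating \eqref{eq:coherency-index} at the vertex of $\envelope{w_{S_1}}{\Hypersimplex{2}{n}}$ that separates $m_1$ from the $m_i$ with $i\ne 1$, and restricting the inner minimum to vertices of the form $e_{m_0}+e_{m_j}$ and $e_{m_i}+e_{m_j}$ on appropriate sides, one gets (after a short bookkeeping using that $w_{S_2}$ and $w_{S_3}$ each contribute $1$ to exactly two of the three ``pair sums'' over $\{m_0,m_1,m_2,m_3\}$) a witness ratio strictly less than $1$. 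Hence $\alpha^{w_\cS}_{w_{S_1}}<1$, so by Proposition~\ref{prop:coherent} the decomposition $w_\cS=\sum_{S\in\cS} w_S$ is not coherent and $\cS$ fails to be weakly compatible.

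For the converse, assume $\cS$ is not weakly compatible. Then $\alpha^{w_\cS}_{w_{S_1}}<1$ for some $S_1\in\cS$. The min-max-min in \eqref{eq:coherency-index} is attained at a quadruple of vertices of $\Hypersimplex{2}{n}$, i.e.\ at four edges $\{i_1,j_1\},\{i_2,j_2\}\in\binom{[n]}{2}$ witnessing $\alpha^{w_\cS}_{w_{S_1}}<1$; from the two incident edges $e_{m_1}+e_{m_0}$ and $e_{a}+e_{m_2}$ (with $m_1,a\in A_{S_1}$ and $m_0,m_2\in B_{S_1}$) I read off the labels and set $m_1:=m_1$, choosing $m_2$ in $B_{S_1}$ and $m_0$ in $B_{S_1}$ as indicated. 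The strict inequality forces at least two further split metrics to contribute simultaneously to both pair sums at the 4-tuple; by Theorem~\ref{thm:splitdecomposition} and Proposition~\ref{prop:hypersimplex:splits} these contributions come from splits $S_2,S_3\in\cS$ whose $A$-sides, restricted to $\{m_0,m_1,m_2,m_3\}$ (where $m_3$ is chosen among the supports involved), each contain exactly one of $m_2,m_3$ respectively, yielding the claimed configuration.

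The main obstacle is the converse direction: given abstract non-coherence of $w_\cS$, one must \emph{produce} the second and third splits $S_2,S_3$ with the right separation behavior on the four-element subset. The key technical fact powering this is that the isolation indices of a non-negative sum $d=\sum_T \delta_T$ of split metrics drop below $1$ only when three of the summands $\delta_{S_1},\delta_{S_2},\delta_{S_3}$ jointly produce the ``tripod'' pattern at some four-point subset, which is exactly the obstruction described; this is the $k=2$ specialization of the uniqueness clause of Theorem~\ref{thm:splitdecomposition} combined with the four-point inequality for finite metrics.
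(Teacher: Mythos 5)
The paper's proof of Proposition~\ref{prop:hypersimplex-weaklycompatibel} is not a proof in the usual sense: it is a pointer. The displayed four-point condition \emph{is} Bandelt and Dress's own \emph{definition} of a weakly compatible split system, and their Corollary~10 already establishes that this definition is equivalent to the coherency of $\sum_{S\in\cS}w_S$, which in turn is the definition of weak compatibility adopted in this paper via Lemma~\ref{lem:weaklycompatible}. There is deliberately nothing to prove from scratch. Your proposal, by contrast, tries to re-derive Bandelt--Dress's Corollary~10 ab initio from the coherency index \eqref{eq:coherency-index} and the split decomposition theorem; that is a genuinely different --- and much more ambitious --- route.

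Two things in the proposal would need to be repaired before it could be called a proof. First, you read ``$m_i\in S_j$ iff $i=j$'' as ``$S_j(m_j)\cap\{m_0,m_1,m_2,m_3\}=\{m_j\}$'', i.e.\ each $S_j$ \emph{isolates} $m_j$ among the four points. That reading is not the one Bandelt--Dress use and it fails the proposition already for $\Hypersimplex{2}{4}$: its three splits are \emph{not} weakly compatible (as Example~\ref{ex:cross_d} records: $\WeakSplitComplex{X_3}$ is only the boundary of a triangle, and indeed $\sum_{S}w_S$ is affine linear there), yet every side of every split contains exactly two of the four elements, so no split can isolate a single $m_j$ and your isolation obstruction is vacuous. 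The intended Bandelt--Dress condition is that each $S_j$, restricted to $\{m_0,m_1,m_2,m_3\}$, is the two-two bipartition $\{m_0,m_j\mid m_i,m_k\}$ with $\{i,j,k\}=\{1,2,3\}$ (so that the three restrictions exhaust all three bipartitions of a four-element set). Since the forward direction of your argument is an explicit ratio computation built on the isolation reading, replacing the reading is not cosmetic; the ``short bookkeeping'' would have to be redone.

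Second, and more seriously, the converse direction --- given non-coherence, produce the $S_2,S_3$ and the four-element witness --- is not actually argued; the sentence ``this is the $k=2$ specialization of the uniqueness clause of Theorem~\ref{thm:splitdecomposition} combined with the four-point inequality for finite metrics'' is a restatement of the claim rather than a derivation of it. Producing the witness points and the companion splits from a failure of \eqref{eq:coherency-index} is precisely the technical content of Bandelt--Dress's theorem (a careful analysis of isolation indices over all four-element subsets), and nothing in Sections~\ref{sec:coherency}--\ref{sec:splits} supplies it for free. As written, the proposal therefore has a wrong reading of the statement in one direction and no proof at all in the other. If you want to avoid citing Bandelt--Dress, you would need to carry out the full isolation-index analysis over all quartets; if citing is acceptable, the paper's one-line reduction is the efficient route.
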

\begin{proof}
  This is the definition of a weakly compatible split system $\Hypersimplex{2}{n}$
  originally given by Bandelt and Dress in \cite[Section~1, page~52]{BandeltDress92}. Their Corollary 10 states that $\cS$ is weakly compatible in their sense if
  and only if $\sum_{S\in\cS} w_{S}$ is a coherent decomposition. However, this is our
  definition of weakly compatibility according to Lemma \ref{lem:weaklycompatible}.
\end{proof}

\begin{example}\label{ex:octahedron}
  The hypersimplex $\Hypersimplex{2}{4}$ is the regular octahedron, already studied in
  Example~\ref{ex:cross_d}.  It has the three splits $(\{1,2\},\{3,4\})$,
  $(\{1,3\},\{2,4\})$, and $(\{1,4\},\{2,3\})$.  The weak split complex is a triangle, and
  the split compatibility graph consists of three isolated points.
\end{example}

The split compatibility graph of $\Hypersimplex{2}{5}$ is isomorphic to the Petersen
graph.  It is shown in Figure~\ref{fig:petersen}.

\begin{figure}[htb]
  \includegraphics[width=.4\textwidth]{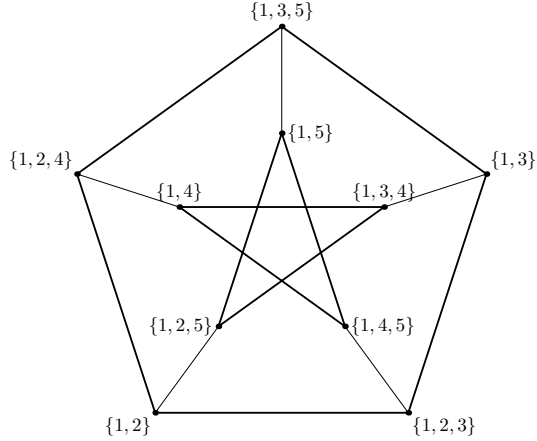}
  \caption{Split compatibility graph of $\Hypersimplex{2}{5}$; a split $(A,B)$ with $1\in
    A$ is labeled ``$A$''.}
  \label{fig:petersen}
\end{figure}

By Proposition \ref{prop:tree} each compatible system of splits gives rise to a tree. On
the other hand, given a tree with $n$ labeled leaves take for each edge $E$ that is not
connected to a leave the split $(A,B)$ where $A$ is the set of labels on one side of $E$
and $B$ the set of labels on the other side. So each tree gives rise to a system of splits for
$\Hypersimplex{2}{k}$ which is easily seen to be compatible.  This argument can be
augmented to a proof of the following theorem.

\begin{theorem}[Buneman~\cite{Buneman74}; Billera, Holmes, and Vogtmann~\cite{BilleraHolmesVogtmann01}]\label{thm:splitcomplex-trees}
  The split complex $\SplitComplex{\Hypersimplex{2}{n}}$ is the complex of trivalent
  leaf-labeled trees with $n$ leaves.
\end{theorem}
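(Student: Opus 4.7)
The plan is to exhibit an isomorphism between $\SplitComplex{\Hypersimplex{2}{n}}$ and the abstract simplicial complex whose facets are the trivalent trees on $n$ leaves labeled by $[n]$, with lower-dimensional faces corresponding to labeled trees obtained by contracting a subset of internal edges. First I would construct the map from trees to split systems: given a leaf-labeled tree $T$, each internal edge $e$ (one not incident to any leaf) separates the leaves into two sets $A_e,B_e$ of size at least $2$, and the $k=2$ case of Proposition~\ref{prop:hypersimplex:splits} identifies $(A_e,B_e)$ as a genuine split of $\Hypersimplex{2}{n}$. Pairwise compatibility of the resulting system would then follow from the Hirai corollary preceding Proposition~\ref{prop:hypersimplex-weaklycompatibel}: for any two internal edges $e,e'$, removing both from $T$ yields three subtrees with leaf sets $L_1,M,L_2$, and among the four intersections of $\{A_e,B_e\}$ with $\{A_{e'},B_{e'}\}$ the set $L_1\cap L_2$ is empty, which is precisely the criterion for compatibility.

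Next I would recover a leaf-labeled tree from any compatible split system $\cS$. The classical route is Buneman's graph construction~\cite{Buneman74}: form a graph whose vertices are the ``selector'' maps $\phi:\cS\to\bigcup_{S\in\cS}\{A_S,B_S\}$ with $\phi(S)\in\{A_S,B_S\}$ and $\bigcap_{S\in\cS}\phi(S)\ne\emptyset$ (one such vertex per element $i\in[n]$, via $S\mapsto S(i)$), augmented by ``virtual'' vertices obtained by local moves, with an edge between two vertices differing in exactly one coordinate. An alternative approach uses Proposition~\ref{prop:tree}, which already guarantees that $\tightspan{\cS}{\Hypersimplex{2}{n}}$ is a tree, combined with the duality of Proposition~\ref{prop:duality} to identify its $n$ leaves with $[n]$ through the vertices of the corresponding maximal cells of $\subdivision{\cS}{\Hypersimplex{2}{n}}$.

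To finish, I would match the face posets. Since a leaf-labeled tree on $n$ leaves has at most $n-3$ internal edges, with equality if and only if it is trivalent, facets of $\SplitComplex{\Hypersimplex{2}{n}}$ correspond exactly to trivalent labeled trees; contracting an internal edge $e$ of $T$ corresponds to removing the associated split from $\cS$, so the two face posets agree. The main obstacle is the converse direction, namely verifying that the Buneman graph is actually a tree and that its leaves are exactly indexed by $[n]$. A short cycle in this graph would correspond to two splits $(A,B),(C,D)\in\cS$ with all four intersections $A\cap C$, $A\cap D$, $B\cap C$, $B\cap D$ non-empty, contradicting compatibility; ruling out longer cycles requires a careful topological argument, which is more cleanly bypassed by the tight-span alternative using Proposition~\ref{prop:tree}.
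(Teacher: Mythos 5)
Your proposal follows essentially the same route as the paper's sketch: internal edges of a leaf-labeled tree give compatible splits (with compatibility checked via the Hirai corollary on the four-intersections criterion), and conversely Proposition~\ref{prop:tree} already guarantees that the tight span of a compatible split system is a tree, which is exactly the ingredient the paper invokes. The paper itself gives only this sketch and attributes the full argument to Buneman and Billera--Holmes--Vogtmann; your more detailed version, including correctly preferring the tight-span route over the Buneman graph construction, is consistent with that.
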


The split complex $\SplitComplex{\Hypersimplex{2}{n}}$ is equal to the \emph{link of the
  origin} $L_{n-1}$ of the \emph{space of phylogenetic trees} in
\cite{BilleraHolmesVogtmann01}.  It was proved in \cite[Theorem 2.4]{Vogtmann90} (see also
Robinson and Whitehouse~\cite{RobinsonWhitehouse96}) that
$\SplitComplex{\Hypersimplex{2}{n}}$ is homotopy equivalent to a wedge of $n-3$ spheres.
By a result of Trappmann and Ziegler, $\SplitComplex{\Hypersimplex{2}{n}}$ is even
shellable~\cite{TrappmannZiegler99}.  Markwig and Yu~\cite{MarkwigYu07} recently
identified the space of $k$ tropically collinear points in the tropical
$(d-1)$-dimensional affine space as a (shellable) subcomplex of
$\SplitComplex{\Hypersimplex{2}{k+d}}$.


\begin{example}
  Consider the split system $\cS=\smallSetOf{(A_{ij},[n]\setminus A_{ij})}{1\leq i < j
    \leq n\text{ and } j-i<n-2}$ where $A_{ij}:=\{i,i+1,\dots,j-1,j\}$ for the
  hypersimplex $\Hypersimplex{2}{n}$.  The combinatorial criterion of
  Proposition~\ref{prop:hypersimplex-weaklycompatibel} shows that this split system is
  weakly compatible, and that $\#\cS=\binom{n}{2}-n$. Since $\Hypersimplex{2}{n}$ has
  $\binom n2$ vertices and is of dimension $n-1$,
  Corollary~\ref{cor:weakly-compatible-triangulation} implies that $\subdivision{\cS}{P}$
  is a triangulation. This triangulation is known as the \emph{thrackle triangulation} in
  the literature; see~\cite{dLST95}, \cite[Chapter 14]{Sturmfels96}, and
  additionally~\cite{Stanley77,BandeltDress92,LamPostnikov05,HerrmannJoswig07} for further
  occurrences of this triangulation. In fact, as one can conclude from \cite[Theorem 3.1]{DHM00}
  in connection with \cite[Theorem 5]{BandeltDress92}, this is the only split
  triangulation of $\Hypersimplex{2}{n}$, up to symmetry.
\end{example}

\section{Matroid Polytopes and Tropical Grassmannians}\label{sec:matroid}

In the following, we copy some information from Speyer and
Sturmfels~\cite{SpeyerSturmfels04}; the reader is referred to this source for the details.

Let $\ZZ[p]:=\ZZ[p_{i_1,\dots,i_k}\,|\,1\le i_1<i_2<\dots<i_k\le n]$ be the polynomial
ring in $\tbinom{n}{k}$ indeterminates with integer coefficients.  The indeterminate
$p_{i_1,\dots,i_k}$ can be identified with the $k\times k$-minor of a $k\times n$-matrix
with columns numbered $(i_1,i_2,\dots,i_k)$.  The \emph{Pl\"ucker ideal} $I_{k,n}$ is
defined as the ideal generated by the algebraic relations among these minors.  It is
obviously homogeneous, and it is known to be a prime ideal.  For an algebraically closed
field $K$ the projective variety defined by $I_{k,n} \otimes_\ZZ K$ in the polynomial ring
$K[p]=\ZZ[p] \otimes_\ZZ K$ is the \emph{Grassmannian} $\Grassmannian{k}{n}$ (over $K$).
It parameterizes the $k$-dimensional linear subspaces of the vector space~$K^n$.

For instance, we can pick $K$ as the algebraic closure of the field $\CC(t)$ of rational
functions. Then for an arbitrary ideal $I$ in $K[x]=K[x_1,\dots,x_m]$ its
\emph{tropicalization} $\Tropicalization{I}$ is the set of all vectors $w\in\RR^m$ such
that the initial ideal $\initial_w(I)$ with respect to the term order defined by the
weight function~$w$ does not contain any monomial.  The \emph{tropical Grassmannian}
$\tropGrassmannian{k}{n}$ (over $K$) is the tropicalization of the Pl\"ucker ideal
$I_{k,n} \otimes_\ZZ K$.

The tropical Grassmannian $\tropGrassmannian{k}{n}$ is a polyhedral fan in
$\RR^{\tbinom{n}{k}}$ such that each of its maximal cones has dimension $(n-k)k+1$.  In a
way the fan $\tropGrassmannian{k}{n}$ contains redundant information. We describe the
three step reduction in \cite[Section~3]{SpeyerSturmfels04}.

Let $\phi$ be the linear map from $\RR^n$ to $\RR^{\tbinom{n}{k}}$ which sends
$x=(x_1,\dots,x_n)$ to $(x_I\,|\, I \in \tbinom{n}{k})$. Recall that $x_I$ is defined as
$\sum_{i\in I}x_i$.  The map $\phi$ is injective, and its image $\image\phi$ coincides with
the intersection of all maximal cones in $\tropGrassmannian{k}{n}$.  Moreover, the vector
$\vones:=(1,1,\dots,1)$ of length $\tbinom{n}{k}$ is contained in the image of $\phi$.
This leads to the definition of the two quotient fans
\[
\tropGrassmannianOne{k}{n} \ := \ \tropGrassmannian{k}{n}/\RR\vones \quad \text{and} \quad
\tropGrassmannianTwo{k}{n} \ := \ \tropGrassmannian{k}{n}/\image\phi \, .
\]
Finally, let $\tropGrassmannianThree{k}{n}$ be the (spherical) polytopal complex arising
from intersecting $\tropGrassmannianTwo{k}{n}$ with the unit sphere in
$\RR^{\tbinom{n}{k}}/\image\phi$.  We have $\dim\tropGrassmannianThree{k}{n}=n(k-1)-k^2$.
It seems to be common practice to use the name ``tropical Grassmannian'' interchangeably
for $\tropGrassmannian{k}{n}$, $\tropGrassmannianOne{k}{n}$, $\tropGrassmannianTwo{k}{n}$,
as well as $\tropGrassmannianThree{k}{n}$.

It is unlikely that it is possible to give a complete combinatorial description of all
tropical Grassmannians.  The contribution of combinatorics here is to provide kind of an
``approximation'' to the tropical Grassmannians via matroid theory.  For a background on
matroids, see the books edited by White~\cite{White86,White92}.

The \emph{tropical pre-Grassmannian} $\tropPreGrassmannian{k}{n}$ is the subfan of the
secondary fan of $\Hypersimplex{k}{n}$ of those weight functions which induce matroid
subdivisions.  A polytopal subdivision $\Sigma$ of $\Hypersimplex{k}{n}$ is a
\emph{matroid subdivision} if each (maximal) cell is a matroid polytope.  If $M$ is a
matroid on the set $[n]$ then the corresponding \emph{matroid polytope} is the convex hull
of those $0/1$-vectors in $\RR^n$ which are characteristic functions of the bases of~$M$.
A finite point set $X\subset\RR^d$ (possibly with multiple points) gives rise to a matroid
$\cM(X)$ by taking as bases for $\cM(X)$ the maximal affinely independent subsets of $X$.
The following characterization of matroid subdivisions is essential.

\begin{theorem}[Gel{\cprime}fand, Goresky, MacPherson, and Serganova~\cite{GGMS87},
  Theorem~4.1] \label{thm:matroidal} Let $\Sigma$ be a polytopal subdivision of
  $\Hypersimplex{k}{n}$.  The following are equivalent:
  \begin{enumerate}
  \item The maximal cells of $\Sigma$ are matroid polytopes, that is, $\Sigma $ is a
    matroid subdivision,
  \item the $1$-skeleton of $\Sigma$ coincides with the $1$-skeleton of
    $\Hypersimplex{k}{n}$, and
  \item the edges in $\Sigma$ are parallel to the edges of $\Hypersimplex{k}{n}$.
  \end{enumerate}
\end{theorem}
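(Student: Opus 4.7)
The plan is to prove the cycle $(\text{i})\Rightarrow(\text{iii})\Rightarrow(\text{ii})\Rightarrow(\text{i})$, reducing everything to one central lemma of matroid-polytope theory: a $0/1$-polytope $Q$ with $\Vertices{Q}\subseteq\Vertices{\Hypersimplex{k}{n}}$ is a matroid polytope if and only if every edge of $Q$ is parallel to some $e_i-e_j$. I would state and prove this lemma first, since both the forward and backward directions of the theorem ultimately reduce to it.

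For $(\text{i})\Rightarrow(\text{iii})$: each maximal cell of $\Sigma$ is a matroid polytope, so by the ``only if'' part of the lemma its edges are parallel to some $e_i-e_j$; these are precisely the edge directions of $\Hypersimplex{k}{n}$, so all edges of $\Sigma$ satisfy (iii). For $(\text{iii})\Rightarrow(\text{ii})$: by the paper's convention $\Vertices{\Sigma}\subseteq\Vertices{\Hypersimplex{k}{n}}$, and if $[e_A,e_B]$ is an edge of $\Sigma$ then $e_A-e_B=\lambda(e_i-e_j)$ together with $e_A,e_B\in\{0,1\}^n$ and $\sum_\ell (e_A)_\ell=\sum_\ell (e_B)_\ell=k$ forces $\lambda=\pm1$ and $\#(A\triangle B)=2$, so $[e_A,e_B]$ is a hypersimplex edge. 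Conversely, every $e_A\in\Vertices{\Hypersimplex{k}{n}}$ lies in some maximal cell $C$ of $\Sigma$ and, being an extreme point of $\Hypersimplex{k}{n}\supseteq C$, is a vertex of $C$ and thus of $\Sigma$; and every hypersimplex edge $[e_A,e_B]$, being a face of $\Hypersimplex{k}{n}$, is partitioned by $\Sigma$ into $1$-cells whose endpoints lie in $\Vertices{\Sigma}\cap[e_A,e_B]=\{e_A,e_B\}$, so $[e_A,e_B]$ is itself an edge of $\Sigma$. For $(\text{ii})\Rightarrow(\text{i})$: a maximal cell $C$ of $\Sigma$ has its $1$-skeleton contained in that of $\Sigma$, hence by (ii) in that of $\Hypersimplex{k}{n}$; so $C$ is a $0/1$-polytope with vertices in $\Hypersimplex{k}{n}$ all of whose edges are parallel to $e_i-e_j$, and the ``if'' direction of the lemma declares $C$ a matroid polytope.

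The main obstacle is the ``if'' direction of the lemma, which is the substantive combinatorial content (essentially the Gel{\cprime}fand--Serganova theorem). To establish it I would take a $0/1$-polytope $Q$ satisfying the edge condition, let $\cB\subseteq\tbinom{[n]}{k}$ be the index set of its vertices, and verify the basis exchange axiom directly: given $B_1,B_2\in\cB$ and $i\in B_1\setminus B_2$, one chooses a generic linear functional separating $e_{B_1}$ from $e_{B_2}$ and traces an edge-monotone path in $Q$ from $e_{B_1}$ toward $e_{B_2}$; by hypothesis each step is a swap $e_p\leftrightarrow e_q$, and a careful choice of functional forces the first swap involving $i$ to be $i\leftrightarrow j$ with $j\in B_2\setminus B_1$, producing the required basis $(B_1\setminus\{i\})\cup\{j\}\in\cB$. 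Once the exchange axiom is in place, $\cB$ is the basis system of a matroid $M$ and $Q=P_M$ by construction.
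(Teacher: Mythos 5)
The paper does not prove Theorem~\ref{thm:matroidal}; it is cited from Gel{\cprime}fand, Goresky, MacPherson, and Serganova~\cite{GGMS87}. So there is no proof in the text to compare against, and your argument has to be evaluated on its own.

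The cyclic reduction $(\text{i})\Rightarrow(\text{iii})\Rightarrow(\text{ii})\Rightarrow(\text{i})$ and the reduction to a single lemma is a sensible structure. The step $(\text{iii})\Rightarrow(\text{ii})$ is argued well: the argument that the scalar $\lambda$ must be $\pm1$, that vertices of $\Hypersimplex{k}{n}$ persist as vertices of $\Sigma$, and that hypersimplex edges cannot be subdivided because $\Vertices{\Sigma}\subseteq\Vertices{\Hypersimplex{k}{n}}$ are all correct and use exactly the paper's convention that subdivisions introduce no new vertices. Citing the ``only if'' direction of the GGMS lemma (matroid polytope edges are exchanges) as classical is acceptable.

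The genuine gap is in the sketch of the ``if'' direction of the lemma, which is the entire substance of the theorem. You propose to choose a \emph{generic} functional separating $e_{B_1}$ from $e_{B_2}$, follow a monotone edge path, and use ``the first swap involving $i$''. This does not work as stated: if swaps $q_1\leftrightarrow p_1,\dots,q_m\leftrightarrow p_m$ precede the $i$--swap, the vertex reached after swapping out $i$ for $j$ is $(B_1\setminus\{q_1,\dots,q_m,i\})\cup\{p_1,\dots,p_m,j\}$, not $(B_1\setminus\{i\})\cup\{j\}$, so it tells you nothing about the exchange axiom for $B_1$. What is needed is a highly \emph{non-generic}, tailored functional making the argument a single step: for example, with $A=B_1$, $B=B_2$, $a=i$, take $\ell_p=1$ for $p\in A\cap B$, $\ell_a=-1$, $\ell_p=0$ for $p\in(A\triangle B)\setminus\{a\}$, and $\ell_p=-2$ otherwise. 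Then $\max_{Q}\ell=|A\cap B|$ (attained at $e_B$), $\ell(e_A)=|A\cap B|-1$, so $e_A$ is not a maximizer and some edge at $e_A$ is $\ell$-improving; since its increment is at most $1$ and the differences $\ell_p-\ell_q$ with $q\in A$, $p\notin A$ equal $1$ only when $q=a$ and $p\in B\setminus A$, that edge is forced to be $e_A\mapsto e_{A-a+p}$ with $p\in B\setminus A$, giving the exchange directly. Your sketch has the right flavor (improving edge at $e_{B_1}$) but the ``generic functional / first swap involving $i$ along a path'' phrasing is not a correct description of an argument that works; it needs to be replaced by the forced one-step version above.
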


Regular matroid subdivisions of hypersimplices are called ``generalized Lie complexes'' by
Kapranov~\cite{Kapranov93}. The corresponding equivalence classes of weight functions are
the ``tropical Pl\"ucker vectors'' of Speyer~\cite{Speyer04}.

The relationship between the two fans $\tropPreGrassmannian{k}{n}$ and
$\tropGrassmannian{k}{n}$ is the following.  Algebraically, $\tropPreGrassmannian{k}{n}$
is the tropicalization of the ideal of quadratic Pl\"ucker relations; see
Speyer~\cite[Section~2]{Speyer04}.  Conversely, each weight function in the fan
$\tropGrassmannian{k}{n}$ gives rise to a matroid subdivision of~$\Hypersimplex{k}{n}$.
However, since there is no secondary fan naturally associated with
$\tropGrassmannian{k}{n}$ it is a priori not clear how $\tropGrassmannian{k}{n}$ sits
inside $\tropPreGrassmannian{k}{n}$. Note that, unlike $\tropGrassmannian{k}{n}$, the
tropical pre-Grassmannian does not depend on the characteristic of the field $K$.

Our goal for the rest of this section is to explain how the hypersimplex splits are
related to the tropical (pre-)Grassmannians.

\begin{proposition}\label{prop:split:refine}
  Let $\Sigma$ be a matroid subdivision and $S$ a split of $\Hypersimplex{k}{n}$. Then $\Sigma$ and $S$
  have a common refinement (without new vertices).
\end{proposition}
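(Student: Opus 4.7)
The plan is to construct the common refinement explicitly as the cell-by-cell intersection with the two halves of the split. Writing $S_+$ and $S_-$ for the two maximal cells of $S$ and $H_S$ for the split hyperplane, I would form the collection
\[
\SetOf{C\cap S_\epsilon}{C\in\Sigma,\ \epsilon\in\{+,-\}}
\]
(together with all of their faces) and argue that this is a polytopal subdivision of $\Hypersimplex{k}{n}$ refining both $\Sigma$ and $S$, and introducing no new vertices.

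The first step is to leverage the matroidal hypothesis for strong control over the $1$-skeleton of~$\Sigma$: by Theorem~\ref{thm:matroidal}, the edges of $\Sigma$ are exactly the edges of $\Hypersimplex{k}{n}$. Since $S$ is a split of the hypersimplex, $H_S$ does not meet any such edge in its relative interior, and hence $H_S$ does not meet any edge of any cell of $\Sigma$ in its relative interior either.

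Next, fix a maximal cell $C$ of~$\Sigma$ and analyze how $H_S$ interacts with~$C$. If $H_S$ avoids $\Interior(C)$, then $C$ is contained in one of $S_\pm$ and enters the refinement as a single cell. Otherwise $H_S$ passes through $\Interior(C)$ while, by the previous paragraph, separating no edge of~$C$; by the characterization of splits recalled just before Observation~\ref{obs:facets}, $H_S$ then induces a split of the matroid polytope~$C$. In particular the pieces $C\cap S_+$ and $C\cap S_-$ have their vertices among the vertices of~$C$, and gluing these local refinements along common boundaries produces the desired subdivision.

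The only real subtlety is the no-new-vertex condition, and this is precisely what Theorem~\ref{thm:matroidal} provides; without the matroidal hypothesis, $H_S$ could well pass through the interior of some edge of~$\Sigma$ that is not an edge of $\Hypersimplex{k}{n}$ and force a spurious vertex to appear. Once that point is secured, checking that the resulting complex is indeed a polytopal subdivision and that it refines both $\Sigma$ and $S$ is routine: each cell is contained in a cell of $\Sigma$ by construction, and each cell lies on a single side of~$H_S$.
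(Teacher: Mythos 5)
Your proposal is correct and follows essentially the same route as the paper: form the common refinement and observe that new vertices could only come from $H_S$ cutting an edge of $\Sigma$, which is ruled out because Theorem~\ref{thm:matroidal} forces all edges of $\Sigma$ to be edges of $\Hypersimplex{k}{n}$ and $S$ is a split. The extra cell-by-cell bookkeeping you give is fine but not needed beyond this key point.
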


\begin{proof}
  Of course, one can form the common refinement $\Sigma'$ of $\Sigma$ and $S$ but
  $\Sigma'$ may contain additional vertices, and hence does not have to be a polytopal
  subdivision of $\Hypersimplex{k}{n}$. However, additional vertices can only occur if
  some edge of $\Sigma$ is cut by the hyperplane $H_S$. By Theorem~\ref{thm:matroidal}, all
  edges of $\Sigma$ are edges of $\Hypersimplex{k}{n}$. But since $S$ is a split, it does
  not cut any edges of $\Hypersimplex{k}{n}$.  Therefore $\Sigma'$ is a common refinement
  of $S$ and $\Sigma$ without new vertices.
\end{proof}

In order to continue, we recall some notions from linear algebra: Let $V$ be vector space.
A set $A\subset V$ is said to be in \emph{general position} if any subset $S$ of $B$ with
$\#S\leq \dim V+1$ is affinely independent. A family $\cA=\smallSetOf{A_i}{i\in I}$ in $V$
is said to be in \emph{relative general position} if for each affinely dependent set
$S\subseteq \bigcup_{i\in I} A_i$ with $\#S\leq \dim V+1$ there exists some $i\in I$ such
that $S\cap A_i$ is affinely dependent.

\begin{lemma}\label{lem:matroid-general-position}
  Let $\cM$ be a matroid of rank $k$ defined by $X\subset \RR^{k-1}$. If there exists some
  family $\cA=\smallSetOf{A_i}{i\in I}$ of sets in general position with respect to
  $X:=\bigcup_{i\in I} A_i$ such that each $A_i$ is in general position as a subset of
  $\aff A_i$ then the set of bases of $\cM$ is given by
  \begin{align}\label{eq:matroid-general-position}
    \SetOf{B\subset X}{\#B=k\text{ and } \#(B\cap A_i)\leq \dim \aff A_i+1\text{ for all }i\in I} \, .
  \end{align}
\end{lemma}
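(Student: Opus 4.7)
The plan is to prove both inclusions directly from the affine-independence definition of bases of $\cM(X)$: since $\cM$ has rank $k$ and $X \subset \RR^{k-1}$, a $k$-subset of $X$ is a basis if and only if it is affinely independent in $\RR^{k-1}$, and any affinely independent subset of $\RR^{k-1}$ has at most $k$ elements.

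For the forward inclusion, I would take a basis $B$ of $\cM$ and suppose some $i \in I$ violates $\#(B \cap A_i) \leq \dim \aff A_i + 1$. Then $B \cap A_i$ is a subset of the affine space $\aff A_i$ with more than $\dim \aff A_i + 1$ elements, and therefore affinely dependent. But any superset of an affinely dependent set is affinely dependent, contradicting that $B$ is affinely independent. Hence every basis lies in the set on the right-hand side of \eqref{eq:matroid-general-position}.

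For the reverse inclusion, I would take $B \subset X$ with $\#B = k$ satisfying the cardinality constraints and assume, toward contradiction, that $B$ is affinely dependent in $\RR^{k-1}$. Since $\#B = k = \dim \RR^{k-1} + 1$, the hypothesis that $\cA$ is in relative general position (which is how I read the phrase ``in general position with respect to $X$'') supplies some index $i \in I$ such that $B \cap A_i$ is affinely dependent. On the other hand, $B \cap A_i \subseteq A_i$ and $\#(B \cap A_i) \leq \dim \aff A_i + 1$; because $A_i$ is in general position inside $\aff A_i$, every such subset is affinely independent. This contradiction shows $B$ is affinely independent, hence a basis of $\cM$.

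The only real subtlety, which I would flag rather than overthink, is matching the informal phrase ``in general position with respect to $X$'' to the formal notion of relative general position defined just above the lemma, and verifying that $\#B = k$ exactly meets the threshold $\dim V + 1$ needed to apply that definition. Once this identification is made, the argument is a bookkeeping exercise comparing cardinalities to affine dimensions, so no deeper combinatorial input beyond Theorem~\ref{thm:matroidal}'s surrounding matroid framework is required.
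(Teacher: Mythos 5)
Your proof is correct and follows the paper's argument exactly: the forward inclusion is the trivial cardinality/affine-dependence observation, and the reverse inclusion uses relative general position (correctly identified as the meaning of ``general position with respect to $X$'') to find an index $i$ with $B\cap A_i$ affinely dependent, then contradicts the general-position assumption on $A_i$ inside $\aff A_i$. The threshold observation $\#B=k=\dim\RR^{k-1}+1$ is the same bookkeeping the paper relies on implicitly.
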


\begin{proof}
  It is obvious that for each basis $B$ of $\cM$ one has $\#(B\cap A_i)\leq \dim \aff
  A_i+1$ for all $i\in I$. So it remains to show that each set $B$ in
  \eqref{eq:matroid-general-position} is affinely independent. Let $B$ be such a set and
  suppose that $B$ is not affinely independent. Since $\cA$ is in relative general
  position there exists some $i\in I$ such that $B\cap A_i$ is affinely dependent.
  However, since $\#(B\cap A_i)\leq \dim \aff A_i+1$, this contradicts the fact that $A_i$
  is in general position in $\aff A_i$.
\end{proof}

From each split $(A,B;\mu)$ of $\Hypersimplex{k}{n}$ we construct two matroid polytopes
with points labeled by $[n]$: Take any $(\mu-1)$-dimensional (affine) subspace $U\subset
\RR^{k-1}$ and put $\#B$ points labeled by $B$ into $U$ such that they are in general
position (as a subset of $U$). The remaining points, labeled by $A$, are placed in
$\RR^{k-1}\setminus U$ such that they are in general position and in relative general
position with respect to the set of points labeled by $B$. By Lemma
\ref{lem:matroid-general-position} the bases of the corresponding matroid are all
$k$-element subsets of $[n]$ with at most $\mu$ points in $B$. These are exactly the
points in one side of \eqref{eq:hypersimplex:split}.  The second matroid is obtained
symmetrically, that is, starting with $\#A$ points in a $(k-\mu-1)$-dimensional subspace.
Since splits are regular and correspond to rays in the secondary fan we have proved the
following lemma.

\begin{lemma}\label{lem:splits-matroids}
  Each split of $\Hypersimplex{k}{n}$ defines a regular matroid subdivision and hence
  a ray in $\tropPreGrassmannian{k}{n}$.
\end{lemma}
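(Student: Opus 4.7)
The plan is to exhibit, for each split $(A,B;\mu)$ of $\Hypersimplex{k}{n}$, explicit matroid realizations of its two maximal cells $S_+$ and $S_-$. By Lemma~\ref{lem:regular} the split is already regular, so what remains is to verify the matroidal character of the two maximal cells; once this is done, the corresponding ray of $\SecondaryFan{\Hypersimplex{k}{n}}$ automatically lies in the subfan $\tropPreGrassmannian{k}{n}$.

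First I would identify the vertices of the two cells combinatorially. Inspecting the split equation \eqref{eq:hypersimplex:split}, a vertex $\sum_{i\in I}e_i$ of $\Hypersimplex{k}{n}$ lies in $S_+$ if and only if $\#(I\cap B)\le\mu$, and symmetrically $\sum_{i\in I}e_i$ lies in $S_-$ if and only if $\#(I\cap A)\le k-\mu$ (with the roles of $\mu$ and $k-\mu$ swapped). Since the split does not introduce new vertices, it suffices to produce a rank-$k$ matroid on $[n]$ whose bases are exactly the $k$-subsets $I\subseteq[n]$ satisfying $\#(I\cap B)\le\mu$, and an analogous matroid for $S_-$.

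Here I apply the construction that precedes the lemma in the text. Pick a $(\mu-1)$-dimensional affine subspace $U\subset\RR^{k-1}$ and place $\#B$ points labeled by the elements of $B$ inside $U$, generically so that they are in general position within $U$. Then place $\#A$ points labeled by the elements of $A$ in $\RR^{k-1}\setminus U$, also generically so that the resulting family $\{A,B\}$ is in general position in $\RR^{k-1}$ and in relative general position with respect to their union $X:=A\cup B$. A dimension count shows such placements exist (the relevant conditions are open and non-empty), so everything reduces to a generic choice of coordinates. Lemma~\ref{lem:matroid-general-position} applied to the family $\{A,B\}$ then states that the bases of the matroid $\cM(X)$ are exactly the $k$-subsets $I\subseteq[n]$ with $\#(I\cap B)\le \dim U+1=\mu$, which is the desired vertex set of $S_+$. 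Repeating the construction with $A$ and $B$ interchanged and $\mu$ replaced by $k-\mu$ realizes $S_-$ as a matroid polytope.

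Combining the two constructions, the split subdivision $\{S_+,S_-\}$ is a matroid subdivision of $\Hypersimplex{k}{n}$. By Lemma~\ref{lem:regular} it is regular, so it corresponds to a ray of $\SecondaryFan{\Hypersimplex{k}{n}}$; since both maximal cells are matroid polytopes, that ray belongs to the subfan $\tropPreGrassmannian{k}{n}$. The only real technical point is the generic placement underlying the application of Lemma~\ref{lem:matroid-general-position}, but since both general position and relative general position are Zariski-open conditions on the coordinates of the $n$ labeled points (and the required subspace $U$ has dimension~$\mu-1\ge 0$ when $1\le\mu\le k-1$), a suitable configuration always exists.
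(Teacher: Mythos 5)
Your proposal is correct and takes essentially the same approach as the paper: both realize each cell of the split via a point configuration in $\RR^{k-1}$ with the $B$-labeled points in a $(\mu-1)$-dimensional affine subspace (in general position) and the $A$-labeled points generic outside, invoke Lemma~\ref{lem:matroid-general-position} to read off the bases as $k$-subsets with at most $\mu$ elements from $B$, and combine this with the regularity of splits (Lemma~\ref{lem:regular}) to locate the ray in $\tropPreGrassmannian{k}{n}$. Your additional remarks (explicitly characterizing the vertices of $S_+$ and $S_-$, and noting that genericity is available because the required conditions are open and non-empty) are slightly more detailed than the paper's terse exposition but do not change the argument.
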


Matroids arising in this way are called \emph{split matroids}, and the corresponding
matroid polytopes are the \emph{split matroid polytopes}.

\begin{remark}
  Kim \cite{Kim08} studies the splits of general matroid polytopes. However, his
  definition of a split requires that it induces a matroid subdivision.
  Lemma~\ref{lem:splits-matroids} shows that for the entire hypersimplex these notions
  agree.  In this case, \cite[Theorem 4.1]{Kim08} reduces to our
  Lemma~\ref{lem:hypersimplex:splits}.
\end{remark}

\begin{proposition}\label{prop:weak-split-complete}
  The $1$-skeleton of the weak split complex $\WeakSplitComplex{\Hypersimplex{k}{n}}$ of $\Hypersimplex{k}{n}$
  is a complete graph.
\end{proposition}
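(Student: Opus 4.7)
The plan is to show that for any two splits $S_1$ and $S_2$ of $\Hypersimplex{k}{n}$ the decomposition $w_{S_1}+w_{S_2}$ is coherent, since this places $\{S_1,S_2\}$ as an edge of $\WeakSplitComplex{\Hypersimplex{k}{n}}$ by Lemma~\ref{lem:weaklycompatible}. By Corollary~\ref{cor:coherent}, coherence is equivalent to the identity $\subdivision{w_{S_1}+w_{S_2}}{\Hypersimplex{k}{n}} = \Sigma$, where $\Sigma$ denotes the common refinement of $S_1$ and $S_2$.

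First I will establish that $\Sigma$ has no new vertices. By Lemma~\ref{lem:splits-matroids}, the split $S_1$ is also a matroid subdivision, so Proposition~\ref{prop:split:refine} applied with matroid subdivision $\Sigma=S_1$ and split $S=S_2$ delivers exactly this property. Consequently the (up to four) maximal cells of $\Sigma$ are precisely the polytopes $\Hypersimplex{k}{n}\cap\bar S_1^\epsilon\cap\bar S_2^\delta$ for $(\epsilon,\delta)\in\{+,-\}^2$, and each such cell is the convex hull of the vertices of $\Hypersimplex{k}{n}$ that it contains.

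Next I will exhibit four linear functionals supporting these cells as lower facets of $\lift{w_{S_1}+w_{S_2}}{\Hypersimplex{k}{n}}$. Choose a normal $a_i$ of $H_{S_i}$ such that $a_iv\ge 0$ for $v\in S_i^+$, so that the lifting of Lemma~\ref{lem:regular} takes the form $w_{S_i}(v)=\max\{a_iv,0\}$. Set
\[
\ell_{++}(v):=a_1v+a_2v,\qquad \ell_{+-}(v):=a_1v,\qquad \ell_{-+}(v):=a_2v,\qquad \ell_{--}(v):=0.
\]
A case split on the signs of $a_1v$ and $a_2v$ (using that $\max\{a_iv,0\}\ge a_iv$ and $\max\{a_iv,0\}\ge 0$) shows that $\ell_{\epsilon\delta}(v)\le w_{S_1}(v)+w_{S_2}(v)$ for every vertex $v$ of $\Hypersimplex{k}{n}$, with equality precisely when $v\in\bar S_1^\epsilon\cap\bar S_2^\delta$. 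Hence each $\ell_{\epsilon\delta}$ determines a lower facet of $\lift{w_{S_1}+w_{S_2}}{\Hypersimplex{k}{n}}$ whose projection is the corresponding cell of $\Sigma$; combining the four gives $\subdivision{w_{S_1}+w_{S_2}}{\Hypersimplex{k}{n}}=\Sigma$, and weak compatibility follows.

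The delicate point is the no-new-vertex property, which genuinely relies on the hypersimplex-specific combination of Lemma~\ref{lem:splits-matroids} and Proposition~\ref{prop:split:refine}: matroid-cell edges are edges of $\Hypersimplex{k}{n}$ and a split hyperplane crosses no such edge. Once this is in hand the verification of the four supporting functionals is routine arithmetic, and coherence of $w_{S_1}+w_{S_2}$ is immediate.
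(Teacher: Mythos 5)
Your proof is correct and rests on exactly the same two hypersimplex-specific ingredients the paper uses: Lemma~\ref{lem:splits-matroids} to view $S_1$ as a matroid subdivision, and Proposition~\ref{prop:split:refine} to get the common refinement without new vertices. The only difference is that the paper treats the passage from ``common refinement exists without new vertices'' to ``coherence'' as immediate, whereas you spell it out by explicitly exhibiting the four supporting functionals $\ell_{\epsilon\delta}$; your verification of those functionals is correct (and is a clean way to see that once the cells $\Hypersimplex{k}{n}\cap\bar S_1^\epsilon\cap\bar S_2^\delta$ have the right vertex sets, the sum $w_{S_1}+w_{S_2}$ really does induce the common refinement).
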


\begin{proof}
  We have to prove that any two splits of $\Hypersimplex{k}{n}$ are weakly compatible.
  Since splits are matroid subdivisions by Lemma~\ref{lem:splits-matroids} this
  immediately follows from Proposition~\ref{prop:split:refine}.
\end{proof}

\begin{example}\label{ex:octahedron:subdivision}
  We continue our Example~\ref{ex:octahedron}, where $k=2$ and $n=4$.  Up to symmetry,
  each split of the regular octahedron $\Hypersimplex{2}{4}$ looks like
  $(\{1,2\},\{3,4\};1)$, that is, $\mu=1$.

  In this case, the affine subspace $U$ is just a single point on the line $\RR^1$.  The
  only choice for the two points corresponding to $B=\{3,4\}$ is the point $U$ itself.
  The two points corresponding to $A=\{1,2\}$ are two arbitrary distinct points both of
  which are distinct from $U$.  The situation is displayed in
  Figure~\ref{fig:octahedron:subdivision} on the left.  This defines the first of the two
  matroids induced by the split $(\{1,2\},\{3,4\};1)$.  Its bases are $\{1,2\}$,
  $\{1,3\}$, $\{1,4\}$, $\{2,3\}$, and $\{2,4\}$.

  The second matroid is obtained in a similar way.  Both matroid polytopes are square
  pyramids, and they are shown (with their vertices labeled) in
  Figure~\ref{fig:octahedron:subdivision} on the right.  The pyramid in bold is the one
  corresponding to the matroid whose construction has been explained in detail above and
  which is shown on the left.
\end{example}

\begin{figure}[htb]
  \strut
  \hfill
  \begin{minipage}[c]{.35\textwidth}
    \includegraphics[width=\textwidth]{matroid.0}
  \end{minipage}
  \hfill
  \begin{minipage}[c]{.45\textwidth}
    \includegraphics[width=\textwidth]{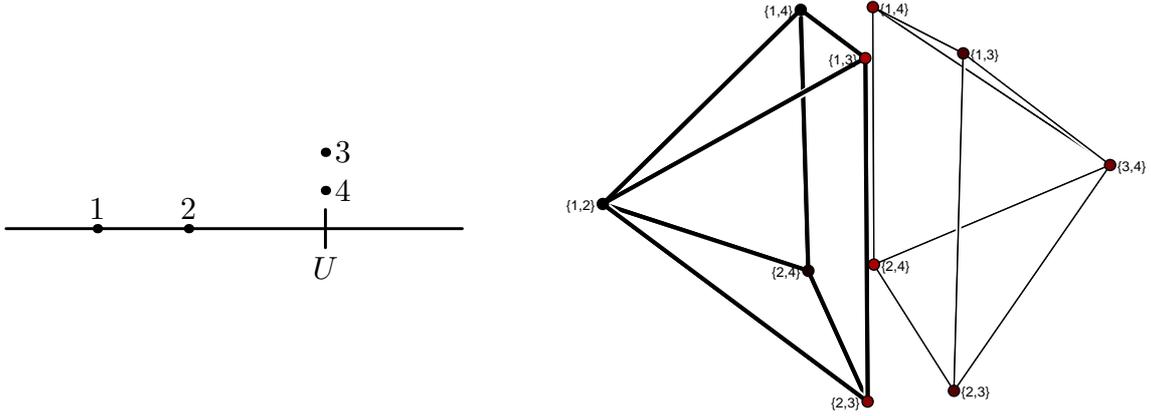}
  \end{minipage}
  \hfill\strut
  \caption{Matroid and matroid subdivision induced by a split as explained in
    Example~\ref{ex:octahedron:subdivision}.}
  \label{fig:octahedron:subdivision}
\end{figure}

As in the case of the tropical Grassmannian, we can intersect the fan
$\tropPreGrassmannian{k}{n}$ with the unit sphere in $\RR^{\tbinom{n}{k}-n}$ to arrive at a
(spherical) polytopal complex $\tropPreGrassmannianOne{k}{n}$, which we also call the
\emph{tropical pre-Grassmannian}.  The following is one of our main results.

\begin{theorem}\label{thm:splitsubcomplex}
  The split complex $\SplitComplex{\Hypersimplex{k}{n}}$ is a polytopal subcomplex of the
  tropical pre-Grassmannian $\tropPreGrassmannianOne{k}{n}$.
\end{theorem}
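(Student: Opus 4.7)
The plan is to reduce the statement to a matroid-subdivision property of compatible split systems, and then to establish that property by induction on the size of the system.

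Corollary~\ref{cor:split-secondary-complex} already realizes $\SplitComplex{\Hypersimplex{k}{n}}$ as a subcomplex of $\SecondaryFanOne{\Hypersimplex{k}{n}}$: the cell indexed by a compatible split system $\cS$ is the one generated by the rays through $\{w_S : S\in\cS\}$. Since $\tropPreGrassmannianOne{k}{n}$ is by definition the subcomplex of $\SecondaryFanOne{\Hypersimplex{k}{n}}$ whose cells correspond to weight functions that induce \emph{matroid} subdivisions, it suffices to show the following: for every compatible split system $\cS$ of $\Hypersimplex{k}{n}$, the subdivision $\subdivision{\cS}{\Hypersimplex{k}{n}}$ is a matroid subdivision.

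I would prove this by induction on $|\cS|$. The base case $|\cS|=1$ is exactly Lemma~\ref{lem:splits-matroids}. For the inductive step, decompose $\cS=\cS'\sqcup\{S\}$ with $S$ compatible with every element of $\cS'$. By induction, $\Sigma:=\subdivision{\cS'}{\Hypersimplex{k}{n}}$ is a matroid subdivision. Proposition~\ref{prop:split:refine} then guarantees that $\Sigma$ and $S$ admit a common refinement $\Sigma^{\star}$ without new vertices, and this common refinement coincides with $\subdivision{\cS}{\Hypersimplex{k}{n}}$. I would then invoke Theorem~\ref{thm:matroidal}: to conclude that $\Sigma^{\star}$ is a matroid subdivision it is enough to verify that its $1$-skeleton agrees with the $1$-skeleton of $\Hypersimplex{k}{n}$. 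The $0$-skeleton is unchanged, and every edge of $\Sigma$ is an edge of $\Hypersimplex{k}{n}$ (by matroidality of $\Sigma$) which the split $S$ cannot cut; thus all edges of $\Sigma$ survive in $\Sigma^{\star}$.

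The hard part will be ruling out spurious new edges lying in the split hyperplane $H_S$. Any such edge would be a chord of some $2$-face of $\Sigma$; since $\Sigma$ is matroidal, each of its $2$-faces is either a triangle (no chord exists) or a parallelogram whose two diagonals fail to be parallel to any $e_i-e_j$. Using the classification of split hyperplanes in Lemma~\ref{lem:hypersimplex:splits} together with the compatibility criterion of Proposition~\ref{prop:hypersimplex:compatible}, I expect to show that a diagonal bisection of a parallelogram $2$-face of $\Sigma$ by $H_S$ would force $H_S$ and some split hyperplane $H_{S'}$ with $S'\in\cS'$ to meet in the relative interior of $\Hypersimplex{k}{n}$, contradicting pairwise compatibility of $\cS$. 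This combinatorial case-check is the technical heart of the argument; once it is in place, $\Sigma^{\star}$ has the same $1$-skeleton as $\Hypersimplex{k}{n}$, Theorem~\ref{thm:matroidal} closes the induction, and the theorem follows.
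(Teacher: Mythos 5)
Your reduction to the claim that $\subdivision{\cS}{\Hypersimplex{k}{n}}$ is a matroid subdivision for every compatible $\cS$ is exactly the paper's reduction, and your diagnosis that a spurious edge of $\Sigma^{\star}$ must arise as a diagonal of a square $2$-face $F$ of $\Sigma$ sliced by $H_S$ is also correct (a triangle $2$-face cannot contribute a new edge, and edges of $\Sigma$ survive because they are edges of the hypersimplex, which $H_S$ does not cut). But the inductive scheme is genuinely different from the paper's: you induct on $\#\cS$ with base case Lemma~\ref{lem:splits-matroids}, while the paper inducts on $(k,n)$ by restricting to facets, with base case $k=2$ via Proposition~\ref{prop:tropGrassmannian:2:n}.

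The gap is in the step you defer: ``a diagonal bisection \dots would force $H_S$ and some $H_{S'}$ to meet in the relative interior of $\Hypersimplex{k}{n}$.'' Compatibility of $\cS$ only says that the split hyperplanes avoid each other in the \emph{relative interior} of the hypersimplex. By Proposition~\ref{prop:tree}, this implies the interior faces of $\Sigma^{\star}$ have codimension at most one, which does rule out a \emph{new interior} edge as soon as $n\ge 4$. Thus the $2$-face $F$ you must worry about lies in the boundary, say inside a facet $G$ of $\Hypersimplex{k}{n}$, and there the hyperplanes $H_S$ and $H_{S'}$ are perfectly free to meet: compatibility in $\Hypersimplex{k}{n}$ simply does not speak to intersections in the relative interior of $G$. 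So no contradiction with ``pairwise compatibility of $\cS$'' is forthcoming, and the combinatorial case-check you anticipate cannot close.

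What does close it is a recursion into facets, which reintroduces the paper's induction on $(k,n)$: by Observation~\ref{obs:facets} each $S\in\cS$ either misses the interior of $G$ or induces a split of $G$, and the restriction $\Sigma^{\star}|_G$ is the common refinement of the restricted splits; if one shows that the restricted system on the smaller hypersimplex $G$ is again compatible, the inductive hypothesis (on $(k,n)$, not on $\#\cS$) together with Theorem~\ref{thm:matroidal} forbids the diagonal edge. Verifying that restriction preserves compatibility is the technical heart, and it is done in the paper by an explicit case analysis with Proposition~\ref{prop:hypersimplex:compatible}. In short: your single-split induction and the $2$-face framing are a reasonable organizing device, but they must be supplemented by the facet recursion and the compatibility-preservation lemma; as written the proposal does not reach the conclusion.
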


\begin{proof}
  By Proposition~\ref{cor:split-secondary-complex}, the split complex is a subcomplex of
  $\SecondaryFanOne{\Hypersimplex{k}{n}}$. Furthermore, by Lemma~\ref{lem:splits-matroids}
  each split corresponds to a ray of $\tropPreGrassmannian{k}{n}$. So it remains to show
  that all maximal cells of $\subdivision{\cS}{\Hypersimplex{k}{n}}$ are matroid polytopes
  whenever $\cS$ is a compatible system of splits. The proof will proceed by induction on
  $k$ and~$n$. Note that, since $\Hypersimplex{k}{n}\cong\Hypersimplex{n-k}{n}$, it is
  enough to have as base case $k=2$ and arbitrary $n$, which is given by
  Proposition~\ref{prop:tropGrassmannian:2:n}.
  
  By Theorem \ref{thm:matroidal}, we have to show that there do not occur any edges in
  $\subdivision{\cS}{\Hypersimplex{k}{n}}$ that are not edges of $\Hypersimplex{k}{n}$.
  Since $\cS$ is compatible no split hyperplanes meet in the interior of
  $\Hypersimplex{k}{n}$, and so additional edges could only occur in the boundary. By
  Observation~\ref{obs:facets}, for each split $S\in\cS$ and each facet $F$ of
  $\Hypersimplex{k}{n}$ there are two possibilities: Either $H_S$ does not meet the
  interior of~$F$, or $H_S$ induces a split $S'$ on $F$. The restriction of
  $\subdivision{\cS}{\Hypersimplex{k}{n}}$ to $F$ equals the common refinement of all such
  splits $S'$. So, using the induction hypothesis and again Theorem~\ref{thm:matroidal},
  it suffices to prove that the split systems that arise in this fashion are compatible.
  
  So let $S=(A,B,\mu)\in\cS$. We have to consider to types of facets of
  $\Hypersimplex{k}{n}$ induced by $x_i=0$, $x_i=1$, respectively. In the first case, the
  arising facet $F$ is isomorphic to $\Hypersimplex{k}{n-1}$ and, if $H_S$ meets $F$ in
  the interior, the split $S'$ of $F$ equals $(A\setminus \{i\}, B;\mu)$ or $(A,
  B\setminus \{i\};\mu)$. It is now obvious by
  Proposition~\ref{prop:hypersimplex:compatible} that the system of all such $S'$ is
  compatible if $\cS$ was.
  
  In the second case, the facet $F$ is isomorphic to $\Hypersimplex{k-1}{n-1}$ and $S'$
  (again if $H_S$ meets the interior of $F$ at all) equals $(A\setminus \{i\}, B;\mu)$ or
  $(A, B\setminus \{i\};\mu-1)$. To show that a split system is compatible it suffices to
  show that any two of its splits are compatible. So let $S=(A,B;\mu)$ and $T=(C,D;\nu)$
  be compatible splits for $\Hypersimplex{k}{n}$ such that $H_S$ and $H_T$ meet the
  interior of $F$, and $S'=(A',B';\mu')$, $T'=(C',D';\nu')$, respectively, the
  corresponding splits of $F$. By the remark after
  Proposition~\ref{prop:hypersimplex:compatible}, we can suppose that we are in the first
  case of Proposition~\ref{prop:hypersimplex:compatible}, that is, $\#(A\cap C)\le
  k-\mu-\nu$. We now have to consider the four cases that $i$ is an element of either
  $A\cap C$, $A\cap D$, $B\cap C$, or $B \cap D$. In the first case, we have
  $S'=(A\setminus \{i\}, B;\mu)$ and $T'=(C\setminus \{i\},D,\nu)$. We get $\# (A'\cap C')
  = \#(A\cap B)-1\leq k-\mu-\nu-1= (k-1)-\mu'-\nu'$, so $S'$ and $T'$ are compatible. The
  other cases follow similarly, and this completes the proof of the theorem.
\end{proof}  

\begin{construction}\label{con:splitsubcomplex}
  We will now explicitly construct the matroid polytopes that occur in the refinement of
  two compatible splits. So consider two compatible splits of $\Hypersimplex{k}{n}$
  defined by an $(A,B;\mu)$- and a $(C,D;\nu)$-hyperplane. These two hyperplanes divide
  the space into four (closed) regions. Compatibility implies that the intersection of one
  of these regions with $\Hypersimplex{k}{n}$ is not full-dimensional, two of the
  intersections are split matroid polytopes, and the last one is a full-dimensional
  polytope of which we have to show that it is a matroid polytope.  It therefore suffices
  to show that one of the four intersections is a full-dimensional matroid polytope that
  is not a split matroid polytope.
  
  By Proposition \ref{prop:hypersimplex:compatible} and the remark following its proof, we
  can assume without loss of generality that $\#(B\cap D) \le \mu+\nu-k$.  Note first
  that the equation $\sum_{i\in B}x_i=\mu$ also defines the $(A,B;\mu)$-hyperplane
  from Equation~\eqref{eq:hypersimplex:split}, since $x_{A\cup B}=k$ for any point
  $x\in\Hypersimplex{k}{n}$. We will show that the intersection of $\Hypersimplex{k}{n}$
  with the two halfspaces defined by
  \[
  \sum_{i\in B}x_i \ \leq \ \mu \qquad\text{and}\qquad \sum_{i\in D}x_i \ \leq \ \nu
  \]
  is a full dimensional matroid polytope which is not a split matroid polytope.

  To this end, we define a matroid on the ground set $[n]$ together with a realization
  in~$\RR^{k-1}$ as follows. Pick a pair of (affine) subspaces $U_B$ and $U_D$ of
  $\RR^{k-1}$ such that the following holds: $\dim U_B=\mu-1$, $\dim U_D=\nu-1$, and $\dim
  (U_B\cap U_D)=\mu+\nu-k-1$. Note that the latter expression is non-negative as
  $0\leq\#(B\cap D)\leq \mu+\nu-k-1$. The dimension formula then implies that $\dim
  (U_B+U_D)=\mu-1+\nu-1-\mu-\nu+k+1=k-1$, that is, $U_B+U_D=\RR^{k-1}$.

  Each element in $[n]$ labels a point in $\RR^{k-1}$ according to the following
  restrictions.  For each element in the intersection $B\cap D$ we pick a point in
  $U_B\cap U_D$ such that the points with labels in $B\cap D$ are in general position
  within $U_B\cap U_D$.  Since $\#(B\cap D) \le \mu+\nu-k$ the points with labels in
  $B\cap D$ are also in general position within $U_B$.  Therefore, for each element in
  $B\setminus D=B\cap C$ we can pick a point in $U_B\setminus(U_B\cap U_D)$ such that all
  the points with labels in $B$ are in general position within $U_B$.  Similarly, we can
  pick points for the elements of $D\cap A$ in $U_D\setminus (U_B\cap U_D)$ such that the
  points with labels in~$D$ are in general position within~$U_D$.  Without loss of
  generality, we can assume that the points with labels in $B$ and the points with labels
  in~$D$ are in relative general position as subsets of $U_B+U_D=\RR^{k-1}$.

  For the remaining elements in $A\cap C=[n]\setminus (B\cup D)$ we can pick points in
  $\RR^{k-1} \setminus (U_B\cup U_D)$ such that the points with labels in $A\cap C$ are in
  general position and the family of sets of points with labels in $B$, $D$, and $A\cap
  C$, respectively, is in relative general position.  By Lemma
  \ref{lem:matroid-general-position} the matroid generated by this point set has the
  desired property.
\end{construction}

\begin{example}\label{ex:36:matroid}
  We continue our Example~\ref{ex:36:compatible}, where $k=3$ and $n=6$, considering the
  compatible splits $(\{1,2,6\},\{3,4,5\};2)$ and $(\{4,5,6\},\{1,2,3\};2)$.  In the
  notation used in Construction~\ref{con:splitsubcomplex} we have $A=\{1,2,6\}$, $B=\{3,4,5\}$,
  $C=\{4,5,6\}$, $D=\{1,2,3\}$, and $\mu=\nu=2$.  Hence $A\cap C=\{6\}$, $A\cap
  D=\{1,2\}$, $B\cap C=\{4,5\}$, and $B\cap D=\{3\}$. The matroid from
  Construction~\ref{con:splitsubcomplex} is displayed in Figure~\ref{fig:36:matroid}.  The
  non-split matroid polytope constructed in the proof of Theorem~\ref{thm:splitsubcomplex}
  has the $f$-vector $(18,72,101,59,14)$.
\end{example}

\begin{figure}[htb]
  \includegraphics[width=.35\textwidth]{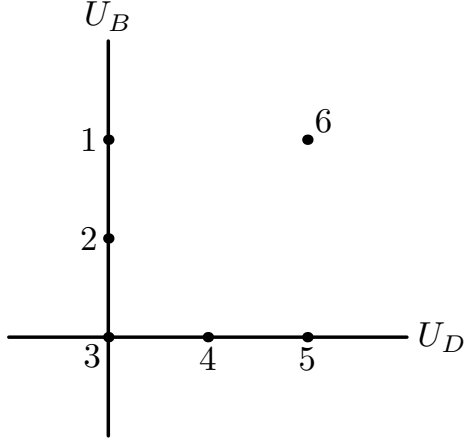}

  \caption{Non-split matroid constructed from two compatible splits in
    $\Hypersimplex{3}{6}$ as in Example~\ref{ex:36:matroid}.}
  \label{fig:36:matroid}
\end{figure}

For the special case $k=2$ the structure of the tropical Grassmannian and pre-Grassmannian
is much simpler. The following proposition follows from \cite[Theorem
3.4]{SpeyerSturmfels04}, in connection with Theorem~\ref{thm:splitcomplex-trees}.

\begin{proposition}\label{prop:tropGrassmannian:2:n}
  The tropical Grassmannian $\tropGrassmannianThree{2}{n}$ equals
  $\tropPreGrassmannianOne{2}{n}$, and it is a simplicial complex which is isomorphic to
  the split complex $\SplitComplex{\Hypersimplex{2}{n}}$.
\end{proposition}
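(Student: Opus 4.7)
The plan is to deduce the proposition from three ingredients: the classical generation of the Plücker ideal for $k=2$, the Speyer--Sturmfels identification of the tropical Grassmannian $\tropGrassmannianThree{2}{n}$ with the space of phylogenetic trees, and Theorem~\ref{thm:splitcomplex-trees} which identifies $\SplitComplex{\Hypersimplex{2}{n}}$ with the complex of trivalent leaf-labeled trees on $n$ leaves.

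First, I would recall that the Plücker ideal $I_{2,n}$ is already generated by the three-term (quadratic) Plücker relations; this is classical. By the very definition of the pre-Grassmannian as the tropicalization of the ideal of quadratic Plücker relations, this gives the equality $\tropGrassmannian{2}{n}=\tropPreGrassmannian{2}{n}$ at the level of fans in $\RR^{\binom{n}{2}}$. Passing to the quotient by $\image\phi$ and intersecting with the unit sphere then yields $\tropGrassmannianThree{2}{n}=\tropPreGrassmannianOne{2}{n}$ as (spherical) polytopal complexes, which is the first assertion.

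Next, I would invoke \cite[Theorem 3.4]{SpeyerSturmfels04}, which asserts that $\tropGrassmannianThree{2}{n}$ is a simplicial complex isomorphic to the space of phylogenetic trees with $n$ leaves, that is, the simplicial complex whose faces are sets of pairwise compatible splits of $[n]$ and whose facets correspond to trivalent leaf-labeled trees on $n$ leaves. Combined with Theorem~\ref{thm:splitcomplex-trees} (Buneman; Billera, Holmes, and Vogtmann), which exhibits $\SplitComplex{\Hypersimplex{2}{n}}$ as exactly the same complex of trivalent leaf-labeled trees, the required isomorphism $\tropPreGrassmannianOne{2}{n}\cong\SplitComplex{\Hypersimplex{2}{n}}$ drops out. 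In particular, simpliciality is inherited from this description.

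The main subtlety, and hence the step most worth double-checking, is that the identification is not just a bijection between maximal faces but a genuine isomorphism of polytopal complexes: each ray of $\tropPreGrassmannianOne{2}{n}$ must correspond to exactly one split of $\Hypersimplex{2}{n}$, and the face relations must match. This is precisely what Lemma~\ref{lem:splits-matroids} (each split gives a ray of $\tropPreGrassmannian{k}{n}$) and Theorem~\ref{thm:splitsubcomplex} (compatibility of splits induces a face of the tropical pre-Grassmannian) give us in the $k=2$ setting, so that the inclusion $\SplitComplex{\Hypersimplex{2}{n}}\subseteq\tropPreGrassmannianOne{2}{n}$ is in fact an equality once dimensions are matched via the tree description. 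This concludes the argument.
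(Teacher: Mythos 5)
Your core argument matches the paper's own: Proposition~\ref{prop:tropGrassmannian:2:n} is stated in the paper precisely as a consequence of \cite[Theorem~3.4]{SpeyerSturmfels04} together with Theorem~\ref{thm:splitcomplex-trees}, and the equality $\tropGrassmannianThree{2}{n}=\tropPreGrassmannianOne{2}{n}$ is indeed a consequence of $I_{2,n}$ being generated by the three-term quadratic Plücker relations (so that the two ideals being tropicalized literally coincide when $k=2$).

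However, the final paragraph of your write-up leans on Theorem~\ref{thm:splitsubcomplex}, and that is circular: the paper proves Theorem~\ref{thm:splitsubcomplex} by induction on $k$ and $n$, and its induction \emph{base case} ($k=2$, arbitrary $n$) is exactly Proposition~\ref{prop:tropGrassmannian:2:n}. So you cannot use Theorem~\ref{thm:splitsubcomplex} here. Fortunately you do not need it: \cite[Theorem~3.4]{SpeyerSturmfels04} identifies $\tropGrassmannianThree{2}{n}$ with the space of phylogenetic trees \emph{as a simplicial/polytopal complex}, and Theorem~\ref{thm:splitcomplex-trees} identifies $\SplitComplex{\Hypersimplex{2}{n}}$ with the same complex, so the face-relations matching that worries you is already supplied by those two results; no additional verification via Lemma~\ref{lem:splits-matroids} or Theorem~\ref{thm:splitsubcomplex} is required. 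Drop that paragraph and the proof is both correct and identical in spirit to the paper's.
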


Let us revisit the two smallest cases: The tropical Grassmannian
$\tropGrassmannianThree{2}{4}$ consists of three isolated points corresponding to the
three splits of the regular octahedron, and $\tropGrassmannianThree{2}{5}$ is a
$1$-dimensional simplicial complex isomorphic to the Petersen graph; see
Figure~\ref{fig:petersen}.

\begin{proposition}
  The rays in $\tropPreGrassmannian{k}{n}$ correspond to the coarsest regular matroid
  subdivisions of $\Hypersimplex{k}{n}$.
\end{proposition}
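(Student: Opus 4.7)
The plan is to leverage the definition of $\tropPreGrassmannian{k}{n}$ as a subfan of the secondary fan together with the characterization of matroid subdivisions via their $1$-skeleton from Theorem~\ref{thm:matroidal}. The central observation is that any coarsening (without new vertices) of a matroid subdivision of $\Hypersimplex{k}{n}$ is again a matroid subdivision: if $\Sigma'$ coarsens a matroid subdivision $\Sigma$, then every edge of $\Sigma'$ is an edge of $\Sigma$, hence parallel to an edge of $\Hypersimplex{k}{n}$, so $\Sigma'$ is matroid by Theorem~\ref{thm:matroidal}. (Alternatively, the $1$-skeleton of $\Sigma'$ is trapped between that of $\Hypersimplex{k}{n}$ and that of $\Sigma$, both of which coincide since $\Sigma$ is matroid.)

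With this in hand, I would argue in two directions. First, since $\tropPreGrassmannian{k}{n}$ is a subfan of $\SecondaryFan{\Hypersimplex{k}{n}}$, every ray of $\tropPreGrassmannian{k}{n}$ is in particular a ray of the secondary fan and hence, by the standard theory of secondary fans, corresponds to a coarsest regular subdivision of $\Hypersimplex{k}{n}$. By definition of $\tropPreGrassmannian{k}{n}$, this subdivision is matroid. In particular it is a coarsest regular matroid subdivision, since being coarsest among all regular subdivisions a fortiori implies being coarsest among regular matroid subdivisions.

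For the converse, let $\Sigma$ be a coarsest regular matroid subdivision of $\Hypersimplex{k}{n}$ and let $w$ be a weight function inducing it, so that $w$ lies in the relative interior of a cone $C$ of $\tropPreGrassmannian{k}{n}$. Suppose for contradiction that $C$ is not a ray (modulo the lineality space). Then we can pick $w'$ in the relative interior of a proper face of $C$ not contained in the lineality space; the subdivision $\Sigma' := \subdivision{w'}{\Hypersimplex{k}{n}}$ is then a proper coarsening of $\Sigma$. By the central observation above, $\Sigma'$ is still a matroid subdivision, and it is regular (being induced by $w'$), contradicting coarsestness of $\Sigma$ among regular matroid subdivisions. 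Hence $C$ is a ray of $\tropPreGrassmannian{k}{n}$, as claimed.

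The only mildly delicate point — and the place one must be careful — is the very first step, namely that coarsening a matroid subdivision keeps it matroid. This uses implicitly that no subdivision of $\Hypersimplex{k}{n}$ (in our convention) can split an edge of the hypersimplex, since no new vertices are introduced, so every edge of $\Hypersimplex{k}{n}$ survives in every subdivision and Theorem~\ref{thm:matroidal} applies cleanly. Everything else is a formal consequence of the subfan structure and the bijection between rays of the secondary fan and coarsest regular subdivisions.
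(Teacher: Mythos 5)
Your proof is correct and follows essentially the same route as the paper's: both hinge on the observation, via Theorem~\ref{thm:matroidal}, that any coarsening (without new vertices) of a matroid subdivision is again a matroid subdivision, and both then translate between rays of the subfan and coarsest regular matroid subdivisions. The paper compresses the forward direction to ``by definition'' and spells out only the reverse implication (that a coarsest matroid subdivision is coarsest among all regular subdivisions), whereas you spell out both directions explicitly, but the underlying argument is identical.
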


\begin{proof}
  By definition, a ray in $\tropPreGrassmannian{k}{n}$ defines a regular matroid
  subdivision which is coarsest among the matroid subdivisions of  $\Hypersimplex{k}{n}$.
  We have to show that this is a coarsest among all subdivisions.

  To the contrary, suppose that $\Sigma$ is a coarsest matroid subdivision which can be
  coarsened to a subdivision $\Sigma'$.  By construction the $1$-skeleton of $\Sigma'$ is
  contained in the $1$-skeleton of $\Sigma$.  From Theorem~\ref{thm:matroidal} it follows
  that $\Sigma'$ is matroidal.  This is a contradiction to $\Sigma$ being a coarsest
  matroid subdivision.
\end{proof}

\begin{example}\label{ex:G36}
  In view of Proposition~\ref{prop:tropGrassmannian:2:n}, the first example of a tropical
  Grassmannian that is not covered by the previous results is the case $k=3$ and $n=6$.
  So we want to describe how the split complex $\SplitComplex{\Hypersimplex{3}{6}}$ is
  embedded into $\tropGrassmannianThree{3}{6}$.  We use the notation
  of~\cite[Section~5]{SpeyerSturmfels04}; see also~\cite[Section~4.3]{Speyer05}.

  The tropical Grassmannian $\tropGrassmannianThree{3}{6}$ is a pure $3$-dimensional
  simplicial complex which is not a flag complex.  Its $f$-vector reads
  $(65,550,1395,1035)$, and its homology is concentrated in the top dimension.  The only
  non-trivial (reduced) homology group (with integral coefficients) is
  $H_3(\tropGrassmannianThree{3}{6};\ZZ)=\ZZ^{126}$.

  The splits with $A=\{1\}\cup A_1$, $\mu=1$, and $A=\{1\}\cup A_3$, $\mu=2$, are the 15
  vertices of type ``F''.  The splits with $A=\{1\}\cup A_2$ and $\mu\in\{1,2\}$ are the
  20 vertices of type ``E''.  Here $A_m$ is an $m$-element subset of $\{2,3,\dots,n\}$.
  The remaining 30 vertices are of type ``G'', and they correspond to coarsest
  subdivisions of $\Hypersimplex{3}{6}$ into three maximal cells.  Hence they do not occur
  in the split complex.  See also Billera, Jia, and Reiner~\cite[Example
  7.13]{BilleraJiaReiner06}.

  The 100 edges of type ``EE'' and the 120 edges of type ``EF'' are the ones induced by
  compatibility.  Since $\SplitComplex{\Hypersimplex{3}{6}}$ does not contain any
  ``FF''-edges it is not an induced subcomplex of $\tropGrassmannianThree{3}{6}$.  The
  matroid shown in Figure~\ref{fig:36:matroid} arises from an ``EE''-edge.

  The split complex is $3$-dimensional and not pure; it has the $f$-vector
  $(35,220,360,30)$.  The 30 facets of dimension $3$ are the tetrahedra of type
  ``EEEE''.  The remaining 240 facets are ``EEF''-triangles.

  The integral homology of $\SplitComplex{\Hypersimplex{3}{6}}$ is concentrated in
  dimension two, and it is free of degree $144$.
\end{example}

\begin{remark}
  Example \ref{ex:G36} and Proposition \ref{prop:tropGrassmannian:2:n} show that the split
  complex is a subcomplex of $\tropGrassmannianThree{k}{n}$ if $d=2$ or $n\leq 6$.
  However, this does not hold in general: Consider the weight functions $w,w'$ defined in
  the proof of \cite[Theorem 7.1]{SpeyerSturmfels04}.  It is easily seen from
  Proposition~\ref{prop:hypersimplex:compatible} that $w$ and $w'$ are the sum of the
  weight functions of compatible systems of vertex splits for $\Hypersimplex{3}{7}$.  Yet
  in the proof of \cite[Theorem 7.1]{SpeyerSturmfels04}, it is shown that
  $w,w'\not\in\tropGrassmannianThree{3}{7}$ for fields with characteristic not equal to
  $2$ and equal to $2$, respectively.
\end{remark}

\section{Open Questions and Concluding Remarks}

We showed that special split complexes of polytopes (e.g., of the polygons and of the
second hypersimplices) already occurred in the literature albeit not under this name. So
the following is natural to ask.

\begin{question}
  What other known simplicial complexes arise as split complexes of polytopes?
\end{question}

The split hyperplanes of a polytope define an affine hyperplane arrangement.  For example,
the coordinate hyperplane arrangements arises as the split hyperplane arrangement of the
cross polytopes; see Example~\ref{ex:cross_d}.

\begin{question}
  Which hyperplane arrangements arise as split hyperplane arrangements of some polytope?
\end{question}

Jonsson \cite{Jonsson05} studies generalized triangulations of polygons; this has a
natural generalization to simplicial complexes of split systems such that no $k+1$ splits
in such a system are totally incompatible. See also \cite{PilaudSantos07,Dress+07}.

\begin{question}
  How do such \emph{incompatibility complexes} look alike for other polytopes?
\end{question}

All computations with polytopes, matroids, and simplicial complexes were done with
\texttt{polymake}~\cite{polymake}.  The visualization also used
\texttt{JavaView}~\cite{javaview}.

We are indebted to Bernd Sturmfels for fruitful discussions. We also thank Hiroshi Hirai
and an anonymous referee for several useful comments.

\bibliographystyle{amsplain}
\bibliography{main}

\end{document}